\newtheorem{thm}{Theorem}[section]
\newtheorem{cor}[thm]{Corollary}
\newtheorem{lem}[thm]{Lemma}
\newtheorem{prop}[thm]{Proposition}
\theoremstyle{definition}
\newtheorem{defi}[thm]{Definition}
\theoremstyle{remark}
\newtheorem{nota}[thm]{Notation}
\newtheorem{rem}[thm]{Remark}
\providecommand{\keywords}[1]
{

	\small	
	\textbf{\textit{Keywords:}} #1
}
\providecommand{\AMSMSC}[1]
{

	\small	
	\textbf{\textit{AMS MSC:}} #1
}
\begin{document}
\title{Ideals of some Green biset functors }
\author{J. Miguel Calderón \\
calderonl@matmor.unam}
\maketitle
\begin{center}
Centro de Ciencias Matemáticas,\\
 Universidad Nacional Autónoma de México, \\ 
 Antigua Carretera a Pátzcuaro 8701, 58089 Morelia, Michoacán, Mexico

\end{center}
\begin{abstract}
In this article, we describe the lattice of ideals of some Green biset functors. We consider Green biset functors which satisfy that each evaluation is a finite dimensional split semisimple commutative algebra and use the idempotents in these evaluations  to characterize any ideal of  these Green biset functors.  For this we will give the definition of $MC$-group, this definition  generalizes  that of a $B$-group, given for   the Burnside functor.\\
Given a Green biset functor $A$, with the above hypotheses, the set of all $MC$-groups of $A$ has a    structure of a poset  and we  prove that  there exists an isomorphism
 of lattices between the set of ideals of $A$ and  the set  of upward closed subsets of the $MC$-groups of $A$.
\end{abstract}
\keywords{biset, biset functors,   Green biset functor, Ideals of  Green biset functor.}
\AMSMSC{16Y99, 18D99, 20J15.}

\section{Introduction}
In \cite{serge-biset} Serge Bouc introduced and developed the theory of biset functors. This notion provides a framework for situations where structural maps that behave like
restriction, induction, inflation, and deflation, or a subset of them, are present. Typical
examples of biset functors are the various representation rings, for instance the Burnside
ring, the character ring, the Green ring, and the trivial source ring of a finite group $G$.\\
A Green functor for a finite group $G$ is defined as a Mackey functor $A$ with an additional multiplicative structure on each $A(H)$, for $H$ a subgroup of $G$, compatible with the structure of Mackey
functor. In the context of Mackey functors, Green functors have been extensively studied and many
examples and applications of them have been found (see for example Thévenaz \cite{thevenaz1} and Bouc \cite{boucgreen}). In the context of categories of biset functors, it has been proved by
Serge Bouc that if the class of objects in the biset category is closed under direct products, then the
category of biset functors defined on it has a symmetric monoidal structure given by the tensor product and 
the identity element is the Burnside functor. A Green biset functor $A$ is then a monoid in this category. That is, $A$ is a biset functor compatible with the monoidal structures of the biset category and
that of $R$-Mod, when $R$ is a commutative ring with unity. This means that $A$ is equipped with bilinear products from $A(G) \times A(H)$ to $A(G \times H)$, for finite groups $G$ and $H$, which have a unit element
and are associative and functorial in a natural sense. One feature of this practical definition is that it
allows us to observe that many known Green functors are Green biset functors too.  In \cite{serge-biset}  Bouc studied  the  Burnside functor with coefficients in  $\mathbb{Q}$, denoted by $\mathbb{Q}B$.  This functor is a Green  biset functor such that in each evaluation  is  a finite dimensional split semisimple commutative  $\mathbb{Q}$-algebra. Bouc  gave a characterization of ideals of $\mathbb{Q} B$.
The aim of this article is  to generalize the techniques used in \cite{serge-biset} by  Bouc, to find a  characterization
 of  the ideals of any  Green biset functor with similar characteristics to $\mathbb{Q}B$.  This means that the functors we consider  are   finite dimensional split semisimple commutative algebra for any evaluation.  If $A$ is a Green biset functor  with the above hypotheses, in Section \ref{biset and ideals}, we study the effect  of bisets in any primitive idempotent of the evaluations and  we will give the definition of $MC$-group. This definition is a generalization that of a $B$-group, given for  the Burnside functor in   \cite{serge-biset}.  An   ideal of $A$ is a submodule of the $A$-module $A$. In section \ref{ideals and idem}, we will study the  relationship  between any  ideal   and the ideals generated by the  primitive idempotents of any evaluation, with  this  we give an isomorphism of lattices between the lattice of ideals of $A$ and the lattice of upward closed subsets of the $MC$-groups of $A$. In the last section \ref{examples},  we compare our  characterization of  the ideals  with some  classic examples of Green biset functor.

\section{Preliminaries}
Throughout the paper, we fix a commutative unital ring $R$. All referred groups will be
finite.
\subsection{Green Biset functors}
The biset category $R\mathcal{C}$ over $R$ has  all finite groups as objects, and for finite groups $G$ and $H$,  the hom-set  $Hom_{R\mathcal{C}}(G, H)$ is $RB(H, G) := R \otimes_\mathbb{Z}  B(H, G)$, where $B(H, G)$ is the Grothendieck group of the category of finite
$(H, G)$-bisets. The composition of morphisms in $R\mathcal{C}$ is induced by the usual tensor product of bisets,  which will be denoted by $\circ$.\\
We fix a non-empty class $\mathcal{D}$ of finite groups closed under subquotients and cartesian
products. We
denote by $R\mathcal{D}$ the full subcategory of $R\mathcal{C}$ consisting of groups in $\mathcal{D}$, so in particular
$R\mathcal{D}$ is a replete subcategory of $R\mathcal{C}$ (Definition 4.1.7. of \cite{serge-biset}). A biset functor over $R\mathcal{D}$ or  $R\mathcal{D}$-biset functor is an $R$-linear functor from $R\mathcal{D}$ to the category $R$-Mod of $R$-modules. Biset functors over $R\mathcal{D}$ form an abelian category,  where morphisms are natural transformations of functors,  it will be denoted by $\mathcal{F}_{D,R}$.\\
A Green $\mathcal{D}$-biset functor is defined as a monoid in $\mathcal{F}_{D,R}$ (see Definition 8.5.1
in \cite{serge-biset}). This is equivalent to the following definition:
\begin{defi} \label{Defi1}
An  $R\mathcal{D}$-biset functor $A$ is a Green biset functor over $R\mathcal{D}$ if it is equipped with
bilinear products $A(G) \times A(H) \longrightarrow A(G \times H$) denoted by $(a, b) \longmapsto a \times b$, for groups $G$, $H$ in $\mathcal{D}$, and an element $\xi_A \in A(1)$, satisfying the following conditions:
\begin{itemize}
\item Associativity. Let $G$, $H$ and $K$ be groups in $\mathcal{D}$. If we consider the canonical
isomorphism  $\phi: G \times (H \times K) \longrightarrow (G \times H) \times K$, then for any $a \in A(G)$,
$b\in A(H)$ and $c \in A(K)$
\begin{align*}
(a \times b) \times c = A(Iso \phi)(a\times (b\times c)).
\end{align*}
\item Identity element. Let $G$ be a group in $\mathcal{D}$ and consider the canonical isomorphisms
$\lambda_G :1 \times G \longrightarrow G $ and $\overline{\lambda}_G: G \times 1 \longrightarrow G$. Then for any $a \in A(G)$
\begin{align*}
a=A(Iso \lambda_G)(\xi_A \times a)=A(Iso \overline{\lambda}_G )(a\times \xi_A).
\end{align*}
\item Functoriality. If $\phi : G \longrightarrow  G^\prime$ and $\psi : H \longrightarrow H^\prime$ are morphisms in $R\mathcal{D}$, then for any $a\in A(G)$ and $b \in A(H)$
\begin{align*}
A(\phi \times \psi)(a \times b) = A(\phi)(a) \times A(\psi)(b).
\end{align*}

\end{itemize}
\end{defi}

If $A$ and $C$ are Green $R\mathcal{D}$-biset functors, a morphism of Green $R\mathcal{D}$-biset functors from $A$ to $C$ is a natural transformation $f : A \longrightarrow C$ such that $f_{H\times K} (a \times b) = f_H (a) \times f_K(b)$ for any groups $H$ and $K$ in $\mathcal{D}$ and any $a \in A(H)$, $b\in  A(K)$, and such that $f_1(\xi_A) = \xi_C$.\\

In Section 8.5 of  \cite{serge-biset} it is shown that this definition is equivalent to the
following definition,  as we see in the next lemma.

\begin{defi} \label{green}
 An $R\mathcal{D}$-biset functor  $A$ is a Green biset functor  over $R\mathcal{D}$ if  for each
group $H$ in $\mathcal{D}$, the $R$-module $A(H)$ is an $R$-algebra with unity that satisfies the following.  Let  $K$ and $G$ be  groups in $\mathcal{D}$ and let $\phi:K \longrightarrow G$  be a group homomorphism, then:
\begin{itemize}
\item  For the $(K, G)$-biset $G$,  denoted by $_{K^\phi}G_G$, the morphism $A(_{K^\phi}G_G)$ is a ring homomorphism.
\item For the ($G, K)$-biset $G$, denoted by $_G G_{^\phi K}$, the morphism $A(_G G_{^\phi K})$ satisfies the Frobenius identities for all $b \in A(G)$ and $a \in A(K)$,
\begin{align*}
A(_G G_{^\phi K})(a) \cdot b&= A(_G G_{^\phi K})(a\cdot A(_{K^\phi} G_G)(b))\\
b \cdot A(_G G_{^\phi K})(a)  &= A(_GG_{^\phi K})(A(_{K^\phi} G_G )(b) \cdot a).
\end{align*}
where $\cdot$ denotes the ring product on $A(G)$, respectively  $A(K)$.
\end{itemize}
\end{defi}

\begin{lem}[Lemma 3 in \cite{center}]\label{equi de defi}
 The two previous definitions are equivalent. Starting
with Definition \ref{Defi1}, the ring structure of $A(H)$ is given by
\begin{align*}
a\cdot b = A\left( Iso^H_{\Delta(H)} \circ Res^{H\times H}_{\Delta(H)}\right)  (a\times b),
\end{align*}
for $a$ and $b$ in $A(H)$, with the unity given by $A(Inf_1^H)(\xi_A)$. Conversely, starting with
Definition \ref{green}, the product of $A(G) \times  A(H) \longrightarrow  A(G \times H)$ is given by
\begin{align*}
a\times b = A (Inf^{G\times H}_G)(a)\cdot A(Inf^{G\times H}_H)(b),
\end{align*}
for $a\in A(G)$ and $b\in A(H)$, with the identity element given by the unity of $A(1)$.
\end{lem}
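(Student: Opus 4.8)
The plan is to verify both directions by translating the operations defined in terms of bisets into the axioms of the other definition, using the calculus of bisets (restriction, inflation, isomorphism, deflation) and the functoriality of $A$. Starting from Definition \ref{Defi1}, I would first check that the proposed product $a\cdot b = A(Iso^H_{\Delta(H)}\circ Res^{H\times H}_{\Delta(H)})(a\times b)$ makes $A(H)$ into an $R$-algebra: bilinearity is immediate since $\times$ is bilinear and $A$ sends bisets to $R$-linear maps; associativity follows from the associativity axiom for $\times$ together with the identity in the double Burnside ring expressing that the two ways of restricting $A(H\times H\times H)$ to the diagonal $\Delta(H)$ agree, i.e. a compatibility of $Iso\circ Res$ along $G\times(H\times K)\cong(G\times H)\times K$ with the canonical isomorphism $\phi$; and the unit $A(Inf_1^H)(\xi_A)$ works because $Inf_1^H$ composed with $Iso^H_{\Delta(H)}\circ Res^{H\times H}_{\Delta(H)}$ reproduces, via the identity-element axiom for $\times$, the maps $Iso\,\lambda_G$ and $Iso\,\overline{\lambda}_G$. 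Then, given a homomorphism $\phi:K\to G$, I would decompose the bisets $_{K^\phi}G_G$ and $_GG_{^\phi K}$ into elementary pieces (via the factorization through $Iso$, $Inf$, $Def$, $Res$ and the image/kernel of $\phi$), and check the ring-homomorphism property and the Frobenius identities by pushing the functoriality axiom for $\times$ through these decompositions.

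For the converse, starting from Definition \ref{green}, I would set $a\times b = A(Inf^{G\times H}_G)(a)\cdot A(Inf^{G\times H}_H)(b)$ in $A(G\times H)$ and check the three axioms of Definition \ref{Defi1}. Bilinearity again is clear. For functoriality, given $\phi:G\to G'$ and $\psi:H\to H'$, I would write $\phi\times\psi$ at the level of bisets and use the commutation of inflation maps with the biset $_{K^\phi}G_G$-type morphisms together with the fact that $A$ applied to a product biset is a ring homomorphism in the relevant evaluation, so that $A(\phi\times\psi)$ distributes over the product defining $\times$. For associativity and the identity element, I would reduce the canonical isomorphisms $\phi:G\times(H\times K)\to(G\times H)\times K$, $\lambda_G$, $\overline{\lambda}_G$ to relations among the inflation maps $Inf^{G\times H}_G$, $Inf^{G\times H}_H$ and iterated inflations into triple products, again using that $A(Iso\,-)$ is a ring isomorphism and that $A(Inf_1^{G})(\xi_A)$ is the unit of $A(G)$.

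The main obstacle I anticipate is bookkeeping rather than conceptual: one must handle the various canonical isomorphisms and the order of the factors carefully, and in particular verify the identities in the double Burnside rings $RB(\Delta(H),H\times H\times H)$ etc. that underlie associativity — these are exactly the "Mackey-type" relations among $Res$, $Inf$, $Iso$ that hold at the level of bisets and are then transported by the (contravariant/covariant) functoriality of $A$. Since the statement asserts this equivalence is already established in Section 8.5 of \cite{serge-biset} (and recorded as Lemma 3 in \cite{center}), I would in practice cite those references for the detailed biset computations and restrict the written proof to exhibiting the two formulas and indicating why each axiom corresponds to a biset identity proved there; the genuine content to display is the pair of conversion formulas and the check that they are mutually inverse up to the canonical identifications.
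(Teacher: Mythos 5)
The paper offers no proof of this lemma at all: it records the statement and defers entirely to Lemma 3 of \cite{center} and Section 8.5 of \cite{serge-biset}, which is also where your sketch ultimately points. Your outline (check each axiom by factoring the bisets ${}_{K^\phi}G_G$ and ${}_GG_{^\phi K}$ into $Iso$, $Inf$, $Res$, $Def$ pieces, use the Mackey-type relations for associativity and the unit, and verify the two conversion formulas are mutually inverse) is the standard argument carried out in those references, so your proposal is correct and consistent with the paper's treatment.
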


\subsection{$A$-modules}

\begin{defi}[Definition 8.5.5 in \cite{serge-biset}]
 Given a Green biset functor $A$, a left $A$-module
$M$ is defined as a biset functor, together with bilinear products
$$\times : A(G) \times M(H) \longrightarrow  M(G \times H)$$
for every pair of groups $G$ and $H$ in $\mathcal{D}$, that satisfy analogous conditions to those of
Definition $\ref{Defi1}$. The notion of right $A$-module is defined similarly, with bilinear products
$M(G) \times A(H) \longrightarrow M(G \times H)$.
\end{defi}
If $M$ and $N$ are $A$-modules, a morphism of $A$-modules is defined as a morphism
of biset functors $f : M \longrightarrow N$ such that $f_{G\times H} (a \times m) = a \times  f_H(m)$ for all groups $G$ and $H$ in $\mathcal{D}$, $a \in  A(G)$ and $m \in M(H)$.  With these morphisms, the $A$-modules form an abelian category, denoted by $A$-Mod.
\begin{defi}
 Let $A$ be a Green biset functor over $R\mathcal{D}$. A left ideal of $A$ is an $A$-submodule of the left $A$-module $A$. In other
words,  it is a biset subfunctor $I$ of $A$ such that the image of 
\begin{align*}
A(G) \times I(H)\longrightarrow A(G \times  H)
\end{align*}
is contained in $I(G\times H)$ for any objects $G$ and $H$ of $\mathcal{D}$.
One defines similarly a right ideal of $A$.  This is a two-sided ideal of $A$ is a left
ideal which is also a right ideal, if  $I$ is a two-sided of $A$, we denote this by $I\leq A$.
\end{defi}
 The  left ideal generated by $x \in A(G)$ is by definition the   intersection of all left ideals $I$ of $A$ such that $x\in I(G)$. Similarly, one defines a right ideal generated by $x$, and then a two-sided ideal generated by $x$.\\
From Proposition 8.6.1 of \cite{serge-biset}, or Proposition 2.11 of \cite{simpleRomero}, an equivalent way of defining an $A$-module is as an $R$-linear functor from the category $\mathcal{P}_A$ to $R$-Mod, the
category $\mathcal{P}_A$ being defined next.
\begin{defi} \label{P_A}
Let $A$ be an $R\mathcal{D}$-Green biset  functor over $R$. The category $\mathcal{P}_A$ is defined in the
following way:
\begin{itemize}
\item The objects of $\mathcal{P}_A$ are all finite groups in $\mathcal{D}$.
\item  If $G$ and $H$ are groups in $\mathcal{D}$, then $Hom_{\mathcal{P}_A}
(H, G) := A(G \times H)$.
\item Let $H$, $G$ and $K$ be groups in $\mathcal{D}$. The composition of $\beta \in  Hom_{\mathcal{P}_A}
(H, G) $ and
$\alpha \in Hom_{\mathcal{P}_A}
(G, K)$ in $\mathcal{P}_A$ is the following:
\begin{align*}
\beta \circ \alpha = A(Def^{H\times \Delta(G) \times K}_{H\times K} \circ Res^{H\times G \times G \times K}_{H\times \Delta(G) \times K}) (\beta \times \alpha).
\end{align*}
\item For a group $G$ in $\mathcal{D}$, the identity morphism $1_G$ of $G$ in $\mathcal{P}_A$ is $ A(Ind^{G\times G}_{\Delta(G)} \circ Inf^{\Delta(G)}_1)(\xi_A)$.
\end{itemize}

\end{defi}

Let $F$  be a functor from $\mathcal{P}_A$ to $R$-Mod and $I$  be a subfunctor of $F$, this is donote by $I\subseteq  F$.

\begin{rem} \label{inter}
Let $F$ be an $A$-module  . If $(I_j)_{j\in J}$ is a set of $A$-submodules of $F$, then the
intersection $ \cap_{j\in J} I_j$ is an $A$-submodule of $F$  whose evaluation at the object $G$
of $R\mathcal{D}$ is equal to
 \begin{align*}
 \left( \cap_{j\in J} I_j  \right) (G) =  \cap_{j\in J} I_j (G) 
 \end{align*}
In particular, let $G$ be  an of object of $\mathcal{D}$ and  $\Gamma_G$
be a subset of $F(G)$. The submodule  $F_{\Gamma_G}$ of $F$ generated by $\Gamma_G$ is
by definition the intersection of all $A$-submodules  $F^\prime$ of $F$ such that $\Gamma_G    \subset  F^\prime(G)$.  If $H$ is an object of $\mathcal{D}$, it is easy to see that
\begin{align*}
F_{\Gamma_G}  (H) = \sum_{\gamma \in \Gamma_G}  F\left(  Hom_{R\mathcal{D}}(G, H)\right) (\gamma).  
\end{align*}
\end{rem}

\begin{proof}
Let  $T$  be the $A$-submodule  of $F$ such that  in an  object $H$ of  $\mathcal{D}$ is define by 
\begin{align*}
T (H) = \sum_{\gamma \in \Gamma_G}  F\left(  Hom_{R\mathcal{D}}(G, H)\right) (\gamma) . 
\end{align*}
By definition of  $F_{\Gamma_G}$, one has $F_{\Gamma_G}$ is an $A$-submodule of $T$. \\
 On the other hand, if $F^\prime$ is an  $A$-submodule   of $F$ such that $\Gamma_G    \subset  F^\prime(G)$, then for any $H$ object of $\mathcal{D}$,  $\alpha \in   Hom_{R\mathcal{D}}(G, H)$ and any $\gamma \in \Gamma_G$  one has $ F^\prime(\alpha)(\gamma) \in F(H)$, then $T$ is an $A$-submodule of $F_{\Gamma_G}$.
\end{proof}

\begin{lem} \label{a comp b}
If $\alpha \in A(H\times K ) $ and $\beta \in A(K\times G)$, then $\alpha\circ \beta$ is equal to 
\begin{align*}
A(Def^{H\times K\times G}_{H\times G})\left( A(Inf_{H\times K}^{H\times K\times G} )(\alpha)\cdot A(Inf^{H\times K\times G}_{K\times G})(\beta)\right).
\end{align*}
\end{lem}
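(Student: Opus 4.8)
The plan is to unfold the definition of composition in $\mathcal{P}_A$ and then convert the external product $\times$ into the internal ring products by means of Lemma \ref{equi de defi}, so that what remains is a formal identification of composites of elementary bisets (restriction, inflation, isomorphism, deflation). Concretely, writing $\alpha\circ\beta$ by Definition \ref{P_A} and identifying $(H\times K)\times(K\times G)$ with $H\times K\times K\times G$ in the obvious way — so that the two middle factors are the copy of $K$ coming from $\alpha\in A(H\times K)$ and the one coming from $\beta\in A(K\times G)$ — one has $\alpha\circ\beta=A\bigl(Def^{H\times\Delta(K)\times G}_{H\times G}\circ Res^{H\times K\times K\times G}_{H\times\Delta(K)\times G}\bigr)(\alpha\times\beta)$, where $\Delta(K)$ denotes the diagonal of those two middle copies of $K$. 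By Lemma \ref{equi de defi} applied to the pair of groups $H\times K$ and $K\times G$, the element $\alpha\times\beta$ equals $A(Inf^{H\times K\times K\times G}_{H\times K})(\alpha)\cdot A(Inf^{H\times K\times K\times G}_{K\times G})(\beta)$, a product in the $R$-algebra $A(H\times K\times K\times G)$.

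Next I would push this product first through the restriction and then through the deflation. Since $A$ of a restriction biset is a ring homomorphism (Definition \ref{green}), $A(Res^{H\times K\times K\times G}_{H\times\Delta(K)\times G})$ distributes over the product; and the composite of a restriction with an inflation is again an inflation whenever the subgroup in question together with the kernel of the inflation spans the whole group — which holds here both for $H\times\Delta(K)\times G$ against the kernel of $Inf_{H\times K}$ and against that of $Inf_{K\times G}$. Hence the product becomes $A(Inf^{H\times\Delta(K)\times G}_{H\times K})(\alpha)\cdot A(Inf^{H\times\Delta(K)\times G}_{K\times G})(\beta)$ in $A(H\times\Delta(K)\times G)$. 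Using the canonical isomorphism $H\times\Delta(K)\times G\cong H\times K\times G$ — whose associated morphism in $R\mathcal{D}$ induces a ring isomorphism carrying inflations to inflations — this is transported to $A(Inf^{H\times K\times G}_{H\times K})(\alpha)\cdot A(Inf^{H\times K\times G}_{K\times G})(\beta)$ in $A(H\times K\times G)$; and since $Def^{H\times\Delta(K)\times G}_{H\times G}$ factors as this isomorphism followed by $Def^{H\times K\times G}_{H\times G}$, applying $A$ of it gives exactly $A(Def^{H\times K\times G}_{H\times G})\bigl(A(Inf^{H\times K\times G}_{H\times K})(\alpha)\cdot A(Inf^{H\times K\times G}_{K\times G})(\beta)\bigr)$, the claimed formula.

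The step I expect to require the most care is the bookkeeping in the middle paragraph: keeping straight which pair of factors $\Delta(K)$ sits inside, and verifying that $Res^{H\times K\times K\times G}_{H\times\Delta(K)\times G}\circ Inf^{H\times K\times K\times G}_{H\times K}$ genuinely collapses to the single inflation $Inf^{H\times\Delta(K)\times G}_{H\times K}$ — i.e.\ that no surviving honest restriction and no Mackey cross‑terms appear — and likewise for the factor attached to $\beta$. Once these composites of elementary bisets are pinned down, everything else is formal functoriality of $A$ together with the standard composition rules for restriction, inflation, isomorphism and deflation bisets from \cite{serge-biset}. The variance and factor‑ordering conventions must also be fixed and used consistently, but they do not affect the substance of the argument.
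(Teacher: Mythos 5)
Your proposal is correct and follows essentially the same route as the paper: unfold the composition in $\mathcal{P}_A$, rewrite $\alpha\times\beta$ via Lemma \ref{equi de defi} as a product of inflations, push the restriction to $H\times\Delta(K)\times G$ through that product using the commutation relation $Res\circ Inf\cong Inf\circ Iso$ (Relations 1.1.3 of \cite{serge-biset}), and absorb the identification $H\times\Delta(K)\times G\cong H\times K\times G$ into the deflation. The only difference is presentational — the paper writes out the intermediate biset isomorphisms explicitly where you describe them informally — so no further changes are needed.
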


\begin{proof}
By lemma \ref{equi de defi}, one has 
\begin{align*}
\alpha\times \beta = A (Inf^{H\times K\times K\times G}_{H\times K})(\alpha )\cdot A(Inf^{H\times K\times K \times G}_{K\times G})(\beta).
\end{align*}
By the Relations 1.1.3 of \cite{serge-biset}, one has the following bisets isomorphism
\begin{align*}
Res_{H\times \Delta(K)\times G}^{H\times K\times K \times G} \circ Inf^{H\times K\times K \times G}_{H\times K} \cong Inf^{H\times \Delta(K)\times G}_{H\times \Delta(K)} \circ (H\times Iso(\varphi)) 
\end{align*}
and
\begin{align*}
Res_{H\times \Delta(K)\times G}^{H\times K\times K \times G} \circ Inf^{H\times K\times K \times G}_{K\times G} \cong Inf^{H\times \Delta(K)\times G}_{ \Delta(K)\times G} \circ (Iso(\varphi)\times G) 
\end{align*}
where $\varphi : \Delta(K) \longrightarrow  K$ is the natural isomorphism. On the other hand, 
\begin{align*}
Inf^{H\times \Delta(K)\times G}_{H\times \Delta(K)} \circ (H\times Iso(\varphi)) \cong (H\times Iso(\varphi) \times G) \circ Inf^{H\times K \times G}_{H\times K}
\end{align*}
and
\begin{align*}
Inf^{H\times \Delta(K)\times G}_{ \Delta(K)\times G} \circ (Iso(\varphi)\times G) \cong (H\times Iso(\varphi) \times G) \circ Inf^{H\times K \times G}_{K\times G}.
\end{align*}
Then  $\alpha \circ \beta$ is equal to $A(Def^{H\times \Delta(K)\times G}_{H\times K})$ applied to
\begin{align*}
A(H\times Iso(\varphi)\times G)\left( A(Inf^{H\times K\times G}_{H\times K})(\alpha)\cdot A(Inf^{H\times K \times G}_{K\times G})(\beta)\right).
\end{align*}
We can note that 
\begin{align*}
( Def^{H\times \Delta(K)\times G}_{H\times K} ) \circ (H\times Iso(\varphi)\times G) =  Def^{H\times  K \times G}_{H\times K}. 
\end{align*}
Thus 
\begin{align*}
\alpha \circ \beta= A(Def^{H\times K\times G}_{H\times G})\left( A(Inf_{H\times K}^{H\times K\times G} )(\alpha)\cdot A(Inf^{H\times K\times G}_{K\times G})(\beta)\right).
\end{align*}

\end{proof}
\subsection{Examples of Green biset functors}
In this subsection,  we revise some classical examples of Green biset functors.  We will study  these functors in  Section \ref{examples}.
\subsubsection{The Burnside functor RB.}
The Burnside functor on $R\mathcal{D}$ is  defined as
\begin{align*}
RB = Hom_{R\mathcal{D}}(1,-).
\end{align*}
In other words, $RB$ is the Yoneda functor corresponding to the trivial
group, which is an object of $\mathcal{D}$ since the class of objects of $\mathcal{D}$ is closed under
 quotients. Thus, for an object $G$ of $\mathcal{D}$,  the $R$-module $RB(G)$ is equal to
$R \otimes_\mathbb{Z} B(G)$. If $H$ is another object of $\mathcal{D}$, and if $U$ is a finite $(H, G)$-biset,
then the map $RB(U) : RB(G) \longrightarrow RB(H)$ is induced by the correspondence
sending a finite $G$-set $X$ to the $H$-set $U \circ_G X$.  The cross product of sets defines the bilinear products
$$RB(G) \times  RB(H) \longrightarrow RB(G \times  H)$$ that make $RB$ a Green biset functor.
 
 \subsubsection{Fibered Burnside Functor} \label{sect fibered functor}
The second example of a Green biset functor that we will see is the fibered Burnside functor (see\cite{fibered} or \cite{romerofibered}). We fix a multiplicative Group group $A$. For every finite group
 $G$, we set 
 \begin{align*}
 G^\ast := Hom (G, A).
 \end{align*}
Let  $X$ be a set, we call $X$ an $A$-fibered $G$-set if $X$ is an $A \times G$-set such that the action of $A$ is free with finitely many orbits. We
denote by $_Gset^A$ the category of $A$-fibered $G$-sets. Here the morphisms are given by $A \times G$-equivariant functions. The operation of disjoint union of sets induces a coproduct on $_Gset^A$
and we denote by $B^A (G)$ the Grothendieck group of this category with respect to disjoint
unions. The group $B^A (G)$ is called the $A$-fibered Burnside group, and it was first introduced,
in a more general way, by Dress in \cite{dressRing}.\\
The functor $B ^A$ is defined as follows. In objects, it sends a group $G$ to $B^A(G )$. In arrows, for
a $(G, H)$-biset $U$, the map $B^A (U) : B^A (H) \longrightarrow B^A (G)$ is the linear extension of the correspondence
$X \longrightarrow U \otimes_{AH} X$ , where $ X$ is an $A$-fibered  $H$-set and  $U \otimes_{AH} X$ is  the subset of elements of $U \times_H X$   in which $A$ acts freely.\\
Let $X$ be an $A$-fibered $G$-set and $Y$ be an $A$-fibered
$H$-set.  The set of $A$-orbits of $X \times Y$ with respect to the action $a(x, y) = (ax, a^{-1} y)$,  the orbit of $(x, y)$ is denoted by $x\otimes_A  y$. We extend this product by linearity and denote it by $X \otimes_A Y$ . The action of $A$ in  $x \otimes y$ is given by $ax \otimes  y$ and so it is easy to see that $A$ acts freely on $X\otimes_A  Y$.\\
The tensor product of $A$-fibered sets defines the bilinear products
\begin{align*}
B^A(G) \times  B^A(H) &\longrightarrow B^A(G \times  H)\\
(X,Y) &\longmapsto X\otimes_A Y
\end{align*}
that make $B^A$ a Green biset functor.

\subsubsection{Slice Burnside functor} \label{slice functor}
The third example of a Green biset functor that we will see is the slice Burnside functor (see \cite{boucslice} and \cite{slicefunctor}).

\begin{defi}
 The category $G$-Mor
of morphisms of $G$-sets has as objects the morphisms of $G$-sets, and a morphism from
$f :A \longrightarrow B$ to $g :A^\prime \longrightarrow  B^\prime$ is a pair of morphisms of $G$-sets $h :A \longrightarrow  A^\prime$ and $k : B \longrightarrow B ^\prime$
making the following diagram commute
\begin{equation}
\xymatrix{
A \ar[r]^f \ar[d]_h & B  \ar[d]_k \\
A^\prime \ar [r]^g & B^\prime.
}
\end{equation}

Note that the category $G$-Mor admits products (induced by the direct product of
$G$-sets) and coproduct (induced by the disjoint union of $G$-sets).
\end{defi}
\begin{defi} \label{xi(G)}
Let $G$ be a finite group. The slice Burnside group $\Xi(G)$ of $G$ is the
Grothendieck group of the category $G$-Mor, This is defined as the quotient
of the free abelian group on the set of isomorphism classes $[ X\xrightarrow{\;\; f \;\; }  Y]$ of morphisms of finite $G$-sets, by the subgroup generated by elements of the form
\begin{align*}
[X_1\sqcup X_2 \xrightarrow{\;\; f_1 \sqcup f_2 \;\; } Y] -[X_1\xrightarrow{\;\; f_1 \;\; } f_1(X_1) ] - [X_2 \xrightarrow{\;\; f_2 \;\; } f_2(X_2) ],
\end{align*}
whenever $X_1  \sqcup X_2  \xrightarrow{\;\; f_1 \sqcup f_2 \;\; } Y$ is a morphism of finite $G$-sets  where $ f_1 \sqcup f_2|_{X_1}= f_1 $ and $=f_1 \sqcup f_2 |_{X_2}=f_2 $ .\\
The  product of morphisms induces a commutative unital ring structure on $\Xi (G)$. The identity element for multiplication is the image of the class $[\bullet \longrightarrow \bullet]$, where $\bullet $ denote a $G$-set of cardinality $1$. For a morphism of $G$-sets $f:X \longrightarrow  Y$, let $\pi(f)$ denote the image in $\Xi(G)$ of the isomorphism class of $f$. 
\end{defi}

\begin{defi}
The functor $\Xi$ is defined as follows. In objects, it sends a group $G$ to $\Xi  (G )$. In arrows, for
a $(G, H)$-biset $U$, the map 
\begin{align*}
\Xi (U) : \Xi  (G) &\longrightarrow \Xi  (H) \\
(X \xrightarrow{\;\; f\;\; } Y) &\longmapsto (U\times_G X \xrightarrow{\;\; U\times_G f\;\; } U\times_G Y),
\end{align*}
 where  $U \times_G X$  and  $U \times_G Y$ have the natural action of $H $-sets coming from the action of $H$ on $U$.
\end{defi}
Let  $X \xrightarrow{\;\; f\;\; } Y$ a morphism of  $G$-set  and let  $ Z \xrightarrow{\;\; g\;\; } W$ a morphism of   $H$-set.  Define $X \times Z \xrightarrow{\;\; f\times g\;\; } Y \times W$  to  be  morphism of $G \times H$-set,  where  $X\times Z$ and  $Y\times W$   are $G \times H$-sets  in the natural way and the function  $X \times Z \xrightarrow{\;\; f\times g\;\; } Y \times W $ is define  by $f\times g(x,z):=\left(  f(x), g(z) \right) $,  we can extend these  definition to  the  bilinear product 
\begin{align*}
\Xi(G) \times  \Xi(H) &\longrightarrow \Xi (G \times  H)\\
(X \xrightarrow{\;\; f\;\; } Y, Z \xrightarrow{\;\; g\;\; } W) &\longmapsto X \times Z \xrightarrow{\;\; f\times g\;\; } Y \times W.
\end{align*}
This   bilinear products   make $\Xi$ a Green biset functor.

Now, we will give some definitions that we will use in  Section \ref{examples}.
\begin{defi}[Definition 3.1 of \cite{boucslice}] \label{slice}
Let $G$ be  a finite group. A  slice of $G$ is a pair  $(T,S)$ of subgroups of $G$, with $S\leq T$.
\end{defi}
The set of slices of $G$ is denoted by $\Pi (G)$. Given two slices  $(V,U)$ and $(T,S)$ of $G$, we say that $(V,U)$ is a quotient of $(T,S)$,   denoted by $(T,S) \twoheadrightarrow  (V,U)$, if there exists a surjective group homomorphism $\phi :T\longrightarrow V$ such that $\phi (S) =U.$  If $\phi$  is an isomorphism, we say that $(V,U)$ and $(T,S)$ are isomorphic. 

 If $(T, S)$ is a slice of $G$, set 
\begin{align*}
\langle T,S\rangle_G=\pi (G/S \longrightarrow G/T)
\end{align*}
as an element of $\Xi(G).$

\begin{lem} [\cite{boucslice}, Lemma 3.4] \label{base de slice} Let $f: X \longrightarrow Y$ be a morphism of $G$-sets, Then in the group $\Xi(G)$, 
\begin{align*}
\pi(f)=\sum_{x\in[ G\setminus X] } \langle G_{f(x)}, G_x \rangle,
\end{align*}
where $G_\bullet$ denotes the stabilizer of $\bullet$.
\end{lem}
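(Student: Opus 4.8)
The plan is to reduce to the defining relations of $\Xi(G)$ from Definition \ref{xi(G)}. First I would decompose $X$ into its $G$-orbits, writing $X=\bigsqcup_{x\in[G\setminus X]} Gx$, where $Gx\cong G/G_x$ as a $G$-set. This induces a decomposition of the morphism $f:X\to Y$ as a disjoint union $\bigsqcup_{x\in[G\setminus X]} f_x$, where $f_x:Gx\to Y$ is the restriction of $f$ to the orbit $Gx$. Applying the defining relation of $\Xi(G)$ repeatedly (once for each orbit, peeling them off one at a time, and using $G_{f(gx)}$ conjugate to $G_{f(x)}$ so that each summand is well defined up to $G$-conjugacy), we get
\begin{align*}
\pi(f)=\sum_{x\in[G\setminus X]}\pi\bigl(Gx\xrightarrow{\;f_x\;}f_x(Gx)\bigr)=\sum_{x\in[G\setminus X]}\pi\bigl(Gx\xrightarrow{\;f_x\;}Gf(x)\bigr),
\end{align*}
since $f_x(Gx)=Gf(x)$ is a single orbit.

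It then remains to identify each term $\pi\bigl(Gx\to Gf(x)\bigr)$ with $\langle G_{f(x)},G_x\rangle$. Here I would fix $x$, set $S=G_x$ and $T=G_{f(x)}$, and observe that $S\leq T$ since $f$ is $G$-equivariant (if $gx=x$ then $gf(x)=f(gx)=f(x)$). Then there is a commutative square of $G$-sets
\begin{align*}
\xymatrix{
Gx \ar[r]^{f_x} \ar[d] & Gf(x) \ar[d] \\
G/S \ar[r] & G/T
}
\end{align*}
where the vertical maps are the canonical $G$-isomorphisms $gx\mapsto gS$ and $gf(x)\mapsto gT$, and the bottom map is the projection $gS\mapsto gT$. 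This square is an isomorphism in the category $G\text{-Mor}$, so $\pi(f_x)=\pi(G/S\to G/T)=\langle T,S\rangle_G=\langle G_{f(x)},G_x\rangle$ by the definition of $\langle T,S\rangle_G$. Substituting back gives the claimed formula.

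The main point requiring care is the bookkeeping in the first step: the relation in Definition \ref{xi(G)} is stated for a binary disjoint union $X_1\sqcup X_2$, so when $X$ has several orbits one must iterate, and at each stage the "target" that appears is the image $f(X_i)$ of the piece, not the whole of $Y$. This is exactly why the answer involves the stabilizers $G_{f(x)}$ of points in the image rather than anything global about $Y$; once one is careful that $f_x(Gx)$ really is the single orbit $Gf(x)$, the induction is routine and the terms are independent of the choice of orbit representatives $x$ up to $G$-conjugacy, which is all that is needed for $\langle G_{f(x)},G_x\rangle$ to be well defined in $\Xi(G)$.
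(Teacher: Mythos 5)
The paper states this lemma without proof, citing it as Lemma 3.4 of Bouc's paper on the slice Burnside ring, so there is no in-paper argument to compare against; your proof is correct and is essentially Bouc's original one. The two steps — iterating the defining relation of $\Xi(G)$ over the orbit decomposition of $X$ (with the target of each piece being its image $f_x(Gx)=Gf(x)$), and then identifying $\pi(Gx\to Gf(x))$ with $\pi(G/G_x\to G/G_{f(x)})=\langle G_{f(x)},G_x\rangle_G$ via the canonical isomorphisms in $G$-Mor — are exactly the standard argument, and your remarks on well-definedness up to conjugacy are the right points to check.
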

Thus, the group $\Xi(G)$ is generated by the elements  $\langle T, S \rangle_G$, where   $  (T, S)$ runs through a set $[\Pi(G)]$ of representatives of conjugacy classes of slices of $G$.

\subsubsection{The shifted Functor }\label{shifted}
The last example of a Green biset functor that we will see is the shifted Functor by a  finite group $K$ (see  \cite{serge-biset} and \cite{simpleRomero}).\\
Let $K$ be a finite group. The   Green biset functor $A$ over $R\mathcal{D}$ can be shifted
by $K$. This gives a new Green biset functor, $A_K$,  defined for a finite group $G$
by
\begin{align*}
A_K(G) = A(G \times K) .
\end{align*}
For finite groups $G$ and $H$ and a finite $(H, G)$-biset $U$, the map
\begin{align*}
A_K(U) : A_K(G) \longrightarrow A_K(H)
\end{align*}
is the map $A(U \times K)$, where $U \times K$ is viewed as a $(H \times K, G \times K)$-biset in
the obvious way, letting $K$ act on both sides on $U \times  K$ by multiplication on
the second component. For an arbitrary element $\alpha \in  R B(H, G)$, that is a
$R$-linear combination of $(H, G)$-bisets, the map $A_K(\alpha) : A_K(G) \longrightarrow A_K(H)$ is
defined by $R$-linearity.
This endows $A_K$ with a biset functor structure. Moreover, for finite
groups $G$ and $H$, the product
\begin{align*}
\times_{A_K}: A_K(G) \times  A_K(H) \longrightarrow A_K(G \times  H)
\end{align*}
is defined as follows: if $\alpha \in  A_K(G) = A(G\times K)$ and $\beta \in A_K(H) = A(H\times K)$,
then $\alpha \times \beta \in  A(G \times K \times H \times K)$. We set
\begin{align*}
\alpha \times_{A_K} \beta =A(Iso(\delta) \circ Res^{G\times K\times H\times K}_\Delta) (\alpha \times \beta) ,
\end{align*}
 where $\Delta = \lbrace (g, k, h, k) | g \in G, h \in H, k \in K \rbrace$, and $\delta$ is the isomorphism
$\Delta \longrightarrow G \times H \times K $ sending $(g, k, h, k)$ to $(g, h, k)$. The identity element $\xi_{A_K}$ is $A(Inf^K_1)(\xi_A)$. This bilinear  product   makes $A_K$ a Green biset functor.

\section{Biset Operations on Idempotents } \label{biset and ideals}
\textbf{Hypothesis}:  In this section $ \mathbb{K}$ is a field of characteristic 0  and   $A$ is a Green biset functor such that $A(G)$ a finite dimensional split semisimple commutative  $\mathbb{K}$-algebra with the product of  definition \ref{green}, for all finite groups $G$. We denote by $E_G$ the corresponding unique set of orthogonal primitive idempotents of $ A (G)$, which exists by the above hypotheses.

\begin{lem}\label{indepontProducPropo}
Let $f \in  A(G)$ be  such that for all $v\in A(G)$, we have  $v\cdot f = \lambda_v f$  for some  $\lambda_v \in \mathbb{K}$, then  $f=ke$ for some $e\in E_G$ and  $k\in \mathbb{K}$.
\begin{proof}
By hypothesis over the functor $A$,  we have  that   $f$  is a linear  combination of elements of  $E_G$,  
\begin{align*}
f=\sum_{e\in E_G} k_e e, 
\end{align*}
where $k_e\in \mathbb{K}$.  If $f\neq 0$, then there exists  $e\in E_G$  such that  $k_e\neq 0$.  By orthogonality, 
\begin{align*}
e\cdot  f = \lambda_e f =k_e e\neq 0,  
\end{align*}
it follows that $\lambda_e\neq 0$. Suppose  there exists  $s\in E_G- \lbrace e \rbrace$  such that $k_s\neq 0$. Then  $s\cdot f =\lambda_s f$, with $\lambda_s \neq 0$, hence 
\begin{align*}
0=(s\cdot e)\cdot f =s\cdot(e \cdot f)=s\cdot (\lambda_ef)=\lambda_s\lambda_e f,
\end{align*}
thus $\lambda_s\lambda_e=0$, this contradiction shows that
 $f=\lambda_e e$.\\
If $f=0$, then $f=0 \cdot e$,  for any  $e \in E_G$.
\end{proof}
\end{lem}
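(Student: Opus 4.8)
The plan is to exploit the explicit structure of $A(G)$ coming from the standing hypothesis. Since $A(G)$ is a finite dimensional split semisimple commutative $\mathbb{K}$-algebra, Artin--Wedderburn together with the splitting assumption gives $A(G)=\bigoplus_{e\in E_G}\mathbb{K}e$ as $\mathbb{K}$-algebras, with $e\cdot e'=\delta_{e,e'}\,e$ for $e,e'\in E_G$ and $\sum_{e\in E_G}e$ equal to the unit. In particular every element of $A(G)$ is uniquely a $\mathbb{K}$-linear combination $\sum_{e\in E_G}k_e e$ with $k_e\in\mathbb{K}$, and multiplication is computed coordinatewise in this decomposition. This reduces the lemma to an elementary coefficient comparison.

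First I would write $f=\sum_{e\in E_G}k_e e$ and dispose of the trivial case $f=0$, for which $f=0\cdot e$ for any $e\in E_G$. So assume $f\neq 0$ and fix $e_0\in E_G$ with $k_{e_0}\neq 0$. Applying the hypothesis to the particular element $v=e_0$, orthogonality gives $e_0\cdot f=k_{e_0}e_0$ on one side, while the hypothesis gives $e_0\cdot f=\lambda_{e_0}f=\lambda_{e_0}\sum_{e\in E_G}k_e e$ on the other. Comparing the coefficient of $e_0$ forces $\lambda_{e_0}k_{e_0}=k_{e_0}$, hence $\lambda_{e_0}=1$ because $k_{e_0}\neq 0$; comparing the coefficient of any $e\in E_G$ with $e\neq e_0$ then gives $k_e=\lambda_{e_0}k_e=0$. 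Therefore $f=k_{e_0}e_0$, which is the claimed form with $k=k_{e_0}$ and $e=e_0$.

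There is essentially no serious obstacle here: the only point requiring a word of justification is the coordinatewise description of $A(G)$ deduced from the hypotheses (split, semisimple, commutative, finite dimensional), after which the argument is a one-line computation. An alternative, slightly more symmetric packaging that avoids singling out $e_0$ is to note that for each $e\in E_G$ one has $e\cdot f=\lambda_e f$ and, for $e\neq e'$ in $E_G$, $\;0=(e\cdot e')\cdot f=e\cdot(e'\cdot f)=\lambda_{e'}(e\cdot f)=\lambda_e\lambda_{e'}f$, so at most one of the scalars $\lambda_e$ is nonzero; combined with $e\cdot f=k_e e$ this shows at most one $k_e$ is nonzero, and the conclusion follows in the same way. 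Either route makes the semisimplicity and splitting hypotheses do all the work.
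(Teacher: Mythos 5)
Your proposal is correct and follows essentially the same route as the paper: decompose $f=\sum_{e\in E_G}k_e e$ using the split semisimple hypothesis, then use orthogonality of the idempotents to show only one coefficient survives. In fact the ``alternative packaging'' you sketch at the end (deriving $\lambda_e\lambda_{e'}f=0$ from $(e\cdot e')\cdot f=0$) is precisely the paper's argument, while your primary coefficient-comparison version is an equally valid, marginally more direct variant.
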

Since our interest is to study the ideals generated by the orthogonal idempotents and    $A$ is a Green biset functor, by Lemma 2.3.26 of \cite{serge-biset}, it is   natural to look at the effect of elementary operations ($Ind$, $Res$, $Inf$, $Def$ and $Iso$) on the idempotents of the algebra $A(G)$.
\begin{thm} 
\label{teodeoper}
Let $G$ be a finite group. 
\begin{enumerate}[(a)]
\item Let $H$ be a subgroup of $G$, and $e\in E_G$. Then
\begin{align*}
A(Res^G_H)(e) =\sum_{s\in \Omega_H} s,
\end{align*} 
where  $\Omega_H$ is a subset of  $E_H$.
\item Let $H$  be a subgroup of $G$ and $t\in E_H$. Then there exist $e\in E_G$ and $\lambda\in \mathbb{K}$ such that
\begin{align*}
A(Ind^G_H)(t)=\lambda e.
\end{align*}
\item Let $N$ be a normal subgroup of $G$ and $e \in E_{G/N}.$ Then 
\begin{align*}
A(Inf_{G/N}^G)(e) = \sum_{t \in \Omega_{G/N}} t,
\end{align*}
where $\Omega_{G/N}$ is a subset of  $E_G$.
\item Let $N$ be a normal subgroup of $G$ and $e \in E_{G}$. Then there exist $\lambda \in \mathbb{K}$ and $t \in E_{G/N}$ such that 
\begin{align*}
A(Def_{G/N}^G) (e)= \lambda t.
\end{align*}
\item If $\phi : G \longrightarrow G^\prime$ is a group isomorphism  and $e\in E_G$. Then
\begin{align*}
A(Iso \phi)(e) = e^\prime
\end{align*}
for some $e^\prime \in E_{G^\prime}$.
\end{enumerate}
\end{thm}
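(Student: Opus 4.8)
The plan is to treat all five statements uniformly by exploiting the ring-homomorphism and Frobenius properties recorded in Definition \ref{green}, together with the characterization of scalar multiples of primitive idempotents in Lemma \ref{indepontProducPropo}. The key observation is that each of the maps $Res^G_H$, $Inf^G_{G/N}$, $Iso\,\phi$ is, up to the conventions of the excerpt, of the form $A(_{K^\phi}G_G)$ for a suitable group homomorphism $\phi$, hence a ring homomorphism; while $Ind^G_H$ and $Def^G_{G/N}$ are of the form $A(_GG_{^\phi K})$, hence satisfy the Frobenius identities. I would first prove (e), then (a) and (c) by the same argument, and finally derive (b) and (d) from Frobenius reciprocity plus part (a)/(c).

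For (e): since $\phi$ is an isomorphism, $A(Iso\,\phi)$ is a ring isomorphism $A(G)\to A(G')$ (its inverse being $A(Iso\,\phi^{-1})$). A ring isomorphism between split semisimple commutative algebras carries the unique set of primitive orthogonal idempotents bijectively onto the other, so $A(Iso\,\phi)(e)\in E_{G'}$. For (a): $A(Res^G_H)$ is a ring homomorphism (the biset $_HG_G$ is of the required type), so $A(Res^G_H)(e)$ is an idempotent of $A(H)$; being idempotent and living in a split semisimple commutative algebra, it is a sum of a subset $\Omega_H\subseteq E_H$ of primitive idempotents. The proof of (c) is verbatim the same with $Inf^G_{G/N}$ in place of $Res^G_H$, using that $_G G_{^\phi G/N}$-type bisets give ring homomorphisms under inflation.

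For (b): let $t\in E_H$ and put $f=A(Ind^G_H)(t)\in A(G)$. I claim $f$ satisfies the hypothesis of Lemma \ref{indepontProducPropo}, i.e. $v\cdot f$ is a scalar multiple of $f$ for every $v\in A(G)$. Indeed, by the Frobenius identity for the induction biset,
\begin{align*}
v\cdot A(Ind^G_H)(t) = A(Ind^G_H)\bigl(t\cdot A(Res^G_H)(v)\bigr).
\end{align*}
Now $A(Res^G_H)(v)$ lies in the commutative algebra $A(H)$, so it is a linear combination $\sum_{s\in E_H}\mu_s s$; multiplying by the primitive idempotent $t$ and using orthogonality gives $t\cdot A(Res^G_H)(v)=\mu_t\, t$ for some $\mu_t\in\mathbb{K}$. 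Hence $v\cdot f = \mu_t A(Ind^G_H)(t)=\mu_t f$, as required. By Lemma \ref{indepontProducPropo}, $f=\lambda e$ for some $e\in E_G$ and $\lambda\in\mathbb{K}$. Part (d) is proved identically, using the Frobenius identity for the deflation biset and part (c) to control $A(Inf^G_{G/N})(v)\cdot t'$ for $t'\in E_{G/N}$.

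The main obstacle I anticipate is purely bookkeeping rather than conceptual: one must check carefully that the elementary bisets $Res$, $Ind$, $Inf$, $Def$, $Iso$ as used here are indeed instances of the bisets $_{K^\phi}G_G$ and $_GG_{^\phi K}$ appearing in Definition \ref{green} (with the correct direction of the arrows, so that $A$ of them is a ring homomorphism resp.\ satisfies Frobenius), and that the relevant Frobenius identity is being applied in the correct variance. Once the dictionary between the two notations is fixed, each of the five parts is a short argument; there is no hard estimate or delicate construction involved, and Lemma \ref{indepontProducPropo} does all the work of turning ``$v\cdot f$ is always a scalar multiple of $f$'' into ``$f$ is a scalar multiple of a primitive idempotent.''
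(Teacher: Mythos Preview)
Your proof is correct, and for parts (a), (c), (d), (e) it is essentially identical to the paper's. The only genuine difference is in part (b): the paper writes $A(Ind^G_H)(t)=\sum_{e\in E_G}\lambda_e e$, picks two putative $e,e'$ with nonzero coefficients, and uses Frobenius plus part (a) to force $A(Res^G_H)(e)\cdot t=t=A(Res^G_H)(e')\cdot t$, hence $A(Res^G_H)(e\cdot e')\neq 0$, a contradiction. You instead verify the hypothesis of Lemma~\ref{indepontProducPropo} directly, exactly as the paper (and you) do for part (d). Your route is more uniform and slightly cleaner, since (b) and (d) become the same argument with $(Ind,Res)$ swapped for $(Def,Inf)$; the paper's route for (b) is more hands-on and avoids invoking the lemma, but at the cost of breaking the symmetry between (b) and (d). One small wording slip in your sketch of (d): you do not need part (c) there, only that $A(Inf^G_{G/N})(v)\cdot e=\alpha e$ because $e\in E_G$ is primitive in the split semisimple algebra $A(G)$.
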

\begin{proof}
\begin{enumerate}[(a)] 
\item Observe that $A(Res^G_K) : A(G)\longrightarrow  A(H)$ is a ring homomorphism. It follows that $A(Res^G_H )(e)$ is an idempotent of $A(H)$, hence a sum of elements of $E_H$.
\item  We have  that $A(Ind^G_H)(t) $  is a linear combination  of elements of $E_G$,
\begin{equation} \label{combilineal}
A(Ind_H^G) (t)=\sum_{e\in E_H} \lambda_e e.
\end{equation}
If $ A(Ind^G_H) (t) \neq 0$,    then there exist  $ e \in  E_G$ and  $\lambda_{e}\in  \mathbb{K}$ such that  $  \lambda_{e} \neq 0$, hence
\begin{align*}
 0 \neq e \cdot \left( A(Ind^G_H)(t)\right)  = \lambda_{e} e. 
\end{align*}
By  The Frobenius relations (Definition \ref{green}), this is equal to 
\begin{align*}
A(Ind^G_H)\left( (A(Res^G_H)e)\cdot t\right), 
\end{align*}
thus  $\left(  A(Res^G_H)(e)\right) \cdot t \neq 0$. Moreover,  $A(Res^G_H$) is a ring homomorphism,
then we have  that
\begin{align*}
 A(Res^G_H) (e)=\sum_{s \in \Omega_H } s,
\end{align*}
where $\Omega_H \subseteq E_H$,
this means that  $\left(  A(Res^G_H)(e)\right) \cdot t =t $. 
Now suppose that there exists  $e^\prime \in E_G$ with $e^\prime \neq e$,  such that  $\lambda_{e^\prime}\neq 0$ in  (\ref{combilineal}), so   $$\left( A(Res^G_H)(e^\prime)\right) \cdot  t = t$$ by the same reason.  Then  
\begin{align*}
A(Res^G_H)(e)  \cdot A(Res^G_H)(e^\prime) \cdot t= t.
\end{align*}
Since $A(Res^G_H)$ is a ring homomorphism, it follows that $$A(Res^G_H)(e \cdot e^\prime) \neq 0,$$
in particular  $e \cdot e^\prime \neq 0$, but this is a contradiction, then  $e = e^\prime$.
Thus  $A(Ind^G_H)(t)  = \lambda_e e$. \\
 On the other hand, if $A(Ind^G_H)(t) = 0$, we have  $A(Ind^G_H)(t)  = 0 \cdot e$  for all $e \in E_G$.
\item Since $A(Inf_{G/N}^G)$   is a ring homomorphism, one has that $A(Inf_{G/N}^G)(e)$ is an idempotent of $A(G)$, hence it  is a sum of elements of $E_G$. 
\item Let  $v\in A(G/N)$,  then   by the  Frobenius relations (Definition \ref{green}), it follows that
\begin{align*}
v\cdot A(Def^G_{G/N})(e)=A(Def_{G/N}^G)(A(Inf_{G/N}^G)(v)\cdot e)
\end{align*}
since  $A(Inf_{G/N}^G) (v)$ is an element of  $A(G)$, we have that $A(Inf_{G/N}^G)( v)$ is a linear combination of the elements of $E_G$, it follows that 
\begin{align*}
A(Inf_{G/N}^G)(v)\cdot e =\alpha \cdot e
\end{align*}
for some  $\alpha \in \mathbb{K}$. Thus, 
\begin{align*}
 v\cdot A(Def^G_{G/N})(e)=A(Def_{G/N}^G)(A(Inf_{G/N}^G)(v)\cdot e) =\alpha A(Def_{G/N}^G )(e).
\end{align*}
 By  Proposition \ref{indepontProducPropo}, there exists $t\in E_{G/N}$ and  $\lambda \in \mathbb{K}$ such that    $A(Def^G_{G/N})(e)=\lambda t$.
\item Observe that $A(Iso\phi)$  is a ring isomorphism.
\end{enumerate}
\end{proof}

\begin{defi}\label{A-group}
 Let $H$ be a finite group. We define
 \begin{align*}
 \underline{E_H }&= \lbrace e_H \in  E_H \mid A(Res^H_K)e_H = 0 \ \forall K < H\rbrace \text{ and} \\
\underline{\underline{E_H}}&=\lbrace e_H\in E_H \mid A(Def^H_{H/N})e_H=0 \ \forall \ N \unlhd H, \ N\neq 1\rbrace.
\end{align*}
We will say that   $H$ is an  $MC$-group  of $A$ (or just $MC$-group if there is no risk of confusion) if  $\underline{E_H}\cap \underline{\underline{E_H}}  \neq \emptyset $.
\end{defi}

\begin{lem}\label{Res-Ind}
 Let  $G$ be a finite group and  $e_G \in E_G$.  Let $H$ be  a minimal subgroup of $G$  with respect to 
$A(Res^G_H) (e_G )\neq 0$ (this group exists because $A(Res^G_G)(e_G) \neq 0$). Then there exist $e_H\in  \underline{E_H}$ and $\alpha \in \mathbb{K}$ such that $e_G = \alpha A(Ind^G_H)(e_H)$.
\end{lem}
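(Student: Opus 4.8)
The plan is to identify the primitive idempotents of $A(H)$ occurring in $A(Res^G_H)(e_G)$, to check that they all lie in $\underline{E_H}$, and then to "invert" the restriction by induction using the Frobenius identities; the main obstacle will be to verify that induction does not annihilate these idempotents.

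\textbf{Step 1 (the idempotents lie in $\underline{E_H}$).} By Theorem \ref{teodeoper}(a) we may write $A(Res^G_H)(e_G)=\sum_{s\in\Omega_H}s$ with $\Omega_H\subseteq E_H$, and $\Omega_H\neq\emptyset$ since $A(Res^G_H)(e_G)\neq 0$. I claim $\Omega_H\subseteq\underline{E_H}$. Indeed, for a proper subgroup $K<H$, transitivity of restriction ($Res^G_K=Res^H_K\circ Res^G_H$) together with the minimality of $H$ gives
\[
0=A(Res^G_K)(e_G)=\sum_{s\in\Omega_H}A(Res^H_K)(s).
\]
Each $A(Res^H_K)(s)$ is an idempotent of $A(K)$ by Theorem \ref{teodeoper}(a), and for distinct $s,s'\in\Omega_H$ these idempotents are orthogonal, because $s\cdot s'=0$ and $A(Res^H_K)$ is a ring homomorphism. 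A sum of pairwise orthogonal idempotents that vanishes must be termwise zero, so $A(Res^H_K)(s)=0$ for all $K<H$; hence $s\in\underline{E_H}$.

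\textbf{Step 2 (reduction to nonvanishing of induction).} Fix $e_H\in\Omega_H$. By Theorem \ref{teodeoper}(b), $A(Ind^G_H)(e_H)=\lambda e'$ for some $e'\in E_G$ and $\lambda\in\mathbb{K}$. Using the Frobenius identity of Definition \ref{green} and the orthogonality of the idempotents $s\in\Omega_H$,
\[
e_G\cdot A(Ind^G_H)(e_H)=A(Ind^G_H)\big(A(Res^G_H)(e_G)\cdot e_H\big)=A(Ind^G_H)(e_H),
\]
so $\lambda(e_Ge')=\lambda e'$. Therefore, provided $\lambda\neq 0$, primitivity of $e'$ forces $e_Ge'=e'$ and hence $e'=e_G$; then $e_G=\lambda^{-1}A(Ind^G_H)(e_H)$, which is the desired conclusion with $\alpha=\lambda^{-1}$. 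So everything reduces to showing $\lambda\neq 0$, i.e. $A(Ind^G_H)(e_H)\neq 0$.

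\textbf{Step 3 (the main point: $A(Ind^G_H)(e_H)\neq 0$).} I would prove the stronger statement $A(Res^G_H\circ Ind^G_H)(e_H)\neq 0$. By the Mackey formula for bisets (see \cite{serge-biset}),
\[
Res^G_H\circ Ind^G_H=\sum_{g\in[H\backslash G/H]}Ind^H_{H\cap{}^{g}H}\circ Iso(c_g)\circ Res^H_{H^g\cap H},
\]
where $c_g$ denotes conjugation by $g$ and $H^g=g^{-1}Hg$. Apply $A$ to $e_H$. Since $e_H\in\underline{E_H}$ by Step 1, every summand with $H^g\cap H<H$ has $A(Res^H_{H^g\cap H})(e_H)=0$ and thus contributes $0$; on the other hand $H^g\cap H=H$ means $H\subseteq g^{-1}Hg$, which for finite groups forces $g\in N_G(H)$, and then the summand reduces to $A(Iso(c_g))(e_H)$, a primitive idempotent of $A(H)$ by Theorem \ref{teodeoper}(e). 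Choosing $1$ among the double-coset representatives, we obtain
\[
A(Res^G_H\circ Ind^G_H)(e_H)=\sum_{t\in E_H}n_t\,t,\qquad n_t\in\mathbb{Z}_{\geq 0},\quad n_{e_H}\geq 1.
\]
Because $\operatorname{char}\mathbb{K}=0$ and the elements of $E_H$ are linearly independent over $\mathbb{K}$, this sum is nonzero; hence $A(Ind^G_H)(e_H)\neq 0$, completing the proof. The delicate point is precisely the bookkeeping here: one must check that the "off-diagonal" Mackey terms really are restrictions to proper subgroups of $H$ (so that $e_H\in\underline{E_H}$ kills them), and one uses $\operatorname{char}\mathbb{K}=0$ to ensure the surviving nonnegative-integer combination of distinct primitive idempotents cannot collapse to zero.
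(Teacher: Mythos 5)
Your proof is correct and follows essentially the same route as the paper's: use minimality of $H$ and the ring-homomorphism property of restriction to place the idempotents of $A(Res^G_H)(e_G)$ in $\underline{E_H}$, the Mackey formula (with the off-diagonal terms killed by $e_H\in\underline{E_H}$ and the surviving $N_G(H)$-terms giving a nonzero nonnegative-integer combination of primitive idempotents in characteristic $0$) to show $A(Ind^G_H)(e_H)\neq 0$, and the Frobenius identities to identify $A(Ind^G_H)(e_H)$ as a nonzero multiple of $e_G$. The only differences are cosmetic (you treat all of $\Omega_H$ at once and defer the Mackey step to the end), so no further comment is needed.
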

\begin{proof}
Since  $A(Res^G_H)(e_G )\neq 0$, by (a) of Theorem \ref{teodeoper} there exists  $e_H \in E_H$ such that $$e_H \cdot A(Res^G_H)(e_G) =e_H.$$ Moreover, if  $K$ is a subgroup of  $H$, we have
\begin{align*}
A(Res^H_K)(e_H)\cdot A(Res^G_K)(e_G) = A(Res^H_K)e_H.
\end{align*}
If $K\neq H$, then  $A(Res^G_K)(e_G) = 0$ and it follows that $A(Res^H_K)(e_H )= 0$. This means that $e_H \in \underline{E_H}$.\\
Now suppose that $A(Ind^G_H)(e_H )= 0$. By the Mackey formula (Relation 1.1.3 of \cite{serge-biset}) we have 
\begin{align*}
0 = A(Res^G_H \circ  Ind^G_H)(e_H) = \sum_{x\in  [H\setminus G/H]} A(Ind^H_{H\cap ^xH} \circ  Iso(\gamma_x) \circ  Res^H_ {H^x\cap H})(e_H),
\end{align*}
where $\gamma_x $  is the conjugation isomorphism by  $x$   from $H^x \cap H$ to  $H \cap ^xH.$  If 
$H^x \cap H$ is a  proper subgroup of  $H$, then $A(Res^H_{H^x\cap H})e_H = 0$ and it follows that the last equation is equal to 
\begin{align*}
0 = \sum_{ x\in [N_G(H)/H]} A(Iso(\gamma_x))(e_H).
\end{align*}
Since $e_H \in E_H$, we have that  $A(Iso(\gamma_x))(e_H) \in E_H$  for all $x \in N_G(H)$, because 
$A(Iso(\gamma_x))$ is a  ring isomorphism  on $A(H)$. Now multiplying by $e_H$, 

\begin{align*}
0 &= e_H \sum_{x\in [N_G(H)/H]} A(Iso(\gamma_x))(e_H)\\
 & = |\lbrace x \in [N_G(H)/H] \mid  A(Iso(\gamma_x))(e_H) = e_H \rbrace |\cdot e_H.
\end{align*}
Since the set on the right is not empty, one has $e_H =0$. This contradiction shows that 
$A(Ind^G_H)e_H \neq 0$.  Since  $e_H \cdot A(Res^G_H)(e_G) =e_H$, then by the Frobenius identities (Definition \ref{green})
\begin{align*}
A(Ind^G_H)(e_H) \cdot e_G =A(Ind^G_H)( e_H\cdot   A(Res^G_H)(e_G)  ) =  A(Ind^G_H)(e_H)\neq 0. 
\end{align*}
Now,  by the proof Theorem \ref{teodeoper},   there exists $\lambda \neq 0$,  such that $$A(Ind^G_H)(e_H) = \lambda e_G.$$  If $\alpha = 1/\lambda$ one has $e_G = \alpha A(Ind^G_H)(e_H)$.
\end{proof}

\begin{lem} \label{lemadefla}
Let  $G$ be a finite group and $e_G\in E_{G}$. If $N$ is a normal subgroup of $G$, maximal   with respect to   $A(Def^G_{G/N})(e_G)\neq 0$, then there exists  $e_{G/N}\in \underline{\underline{E_{G/N}}} $ and  $\alpha \in \mathbb{K}$ with $\alpha \neq 0$,  such that  $e_{G/N}=\alpha  A(Def_{G/N}^G)(e_G) $ and $e_G= e_G \cdot A(Inf_{G/N}^G )(e_{G/N})$.
\begin{proof}
Let  $e_G\in E_G$ and   $N$ a normal subgroup of $G$, maximal with the propriety  $A(Def^G_{G/N})e_G\neq 0$ (this  group  exists because $A(Def^G_{G/1})e_G\neq 0$). By Theorem \ref{teodeoper} (d), we have 
\begin{align*}
A(Def^G_{G/N})(e_G)= \lambda e_{G/N},
\end{align*}
where  $e_{G/N}\in E_{G/N}$ and  $0 \neq\lambda \in \mathbb{K}$, then $e_{G/N}=(1/\lambda) A(Def_{G/N}^G)(e_G) $.  On the other hand, we know by Theorem \ref{teodeoper} (c) that
\begin{align*}
A(Inf_{G/N}^G)(e_{G/N})=\sum_{t \in \Omega} t 
\end{align*}
where $\Omega$ is a subset of  $E_G$. Now suppose that $e_G \cdot  A(Inf_{G/N}^G)(e_{G/N})=0$,
then by the Frobenius identities (Definition \ref{green}) 
\begin{align*}
0=A(Def_{G/N}^G) \left( e_G\cdot  A(Inf_{G/N}^G)(e_{G/N})\right) &=A(Def_{G/N}^G)( e_G)  \cdot  e_{G/N}\\
&= (\lambda e_{G/N})\cdot  e_{G/N}\\
&= \lambda e_{G/N},
\end{align*}
this contradiction shows that  $e_G \cdot  A(Inf_{G/N}^G)(e_{G/N} )\neq 0$. This means that $e_G$ is an element of $\Omega$, thus $e_G \cdot A (Inf^G_{G/N})( e_{G/N} )= e_G$. \\
Let $M/N$  be a non-trivial normal subgroup of $G/N$. Then
 \begin{align*}
A(Def^{G/N}_{(G/N)/(M/N)}) (e_{G/N} )&= A(Def^{G/N}_{(G/N)/(M/N)} \circ Def^G_{G/N})((1/\lambda) e_G) \\
&= 1/\lambda \cdot A(Def^{G}_{(G/N)/(M/N)} )( e_G)\\
&=  1/\lambda \cdot A(Def^{G}_{G/M} ) (e_G)\\
&=0,
 \end{align*}
 thus  $e_{G/N} \in \underline{\underline{E_{G/N}}}. $
\end{proof}
\end{lem}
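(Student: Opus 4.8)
The plan is to produce $e_{G/N}$ as the normalized image of $e_G$ under deflation and then verify the two stated identities by combining Theorem \ref{teodeoper} with the Frobenius relations of Definition \ref{green} and the transitivity of deflation. Note first that a maximal such $N$ exists, since the set of normal subgroups of $G$ is finite and nonempty and $N=1$ already satisfies $A(Def^G_{G/1})(e_G)=e_G\neq 0$. Now apply part (d) of Theorem \ref{teodeoper} to $e_G$ and $N$: it yields $e_{G/N}\in E_{G/N}$ and $\lambda\in\mathbb{K}$ with $A(Def^G_{G/N})(e_G)=\lambda e_{G/N}$. Since $A(Def^G_{G/N})(e_G)\neq 0$ by the choice of $N$, we have $\lambda\neq 0$; setting $\alpha=1/\lambda$ gives the first claimed identity $e_{G/N}=\alpha\,A(Def^G_{G/N})(e_G)$.

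For the inflation identity, I would use part (c) of Theorem \ref{teodeoper} to write $A(Inf^G_{G/N})(e_{G/N})=\sum_{t\in\Omega}t$ for some subset $\Omega\subseteq E_G$; by orthogonality of the idempotents, $e_G\cdot A(Inf^G_{G/N})(e_{G/N})$ is then $e_G$ if $e_G\in\Omega$ and $0$ otherwise. To exclude the second case, apply $A(Def^G_{G/N})$ and invoke the Frobenius identity of Definition \ref{green} in the form $A(Def^G_{G/N})\big(a\cdot A(Inf^G_{G/N})(b)\big)=A(Def^G_{G/N})(a)\cdot b$ with $a=e_G$ and $b=e_{G/N}$:
\[
A(Def^G_{G/N})\big(e_G\cdot A(Inf^G_{G/N})(e_{G/N})\big)=A(Def^G_{G/N})(e_G)\cdot e_{G/N}=\lambda e_{G/N}\cdot e_{G/N}=\lambda e_{G/N}\neq 0 .
\]
Hence $e_G\cdot A(Inf^G_{G/N})(e_{G/N})\neq 0$, which forces $e_G\in\Omega$ and therefore $e_G=e_G\cdot A(Inf^G_{G/N})(e_{G/N})$.

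Finally, to see $e_{G/N}\in\underline{\underline{E_{G/N}}}$, take a nontrivial normal subgroup $M/N$ of $G/N$, so that $M\unlhd G$ with $N\lneq M$. Transitivity of deflation (composing the deflation bisets for $G\to G/N$ and $G/N\to (G/N)/(M/N)$ yields, up to the canonical isomorphism $(G/N)/(M/N)\cong G/M$, the deflation biset for $G\to G/M$) gives, after using $e_{G/N}=\tfrac{1}{\lambda}A(Def^G_{G/N})(e_G)$,
\[
A\big(Def^{G/N}_{(G/N)/(M/N)}\big)(e_{G/N})=\tfrac{1}{\lambda}\,A\big(Def^G_{G/M}\big)(e_G)\quad\text{up to }Iso,
\]
and the right-hand side is $0$ because $M\supsetneq N$ and $N$ is maximal with $A(Def^G_{G/N'})(e_G)\neq 0$. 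Thus every deflation of $e_{G/N}$ to a proper quotient of $G/N$ vanishes, i.e. $e_{G/N}\in\underline{\underline{E_{G/N}}}$. The computations are all short; the only point requiring a bit of care is this last step — the bookkeeping identifying the composite of the two deflation bisets with the deflation biset for $G/M$ via the canonical isomorphism, together with the (trivial but needed) observation that an $Iso$ carries $0$ to $0$, so that the maximality of $N$ really applies. I expect that to be the main, and fairly mild, obstacle.
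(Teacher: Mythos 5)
Your proposal is correct and follows essentially the same route as the paper's proof: produce $e_{G/N}$ via Theorem \ref{teodeoper}(d) with $\alpha=1/\lambda$, use part (c) plus the Frobenius identity applied after $A(Def^G_{G/N})$ to force $e_G\in\Omega$, and finish with transitivity of deflation and the maximality of $N$. The only cosmetic difference is that you argue the inflation step directly rather than by contradiction, and you are slightly more explicit about the isomorphism bookkeeping in the last step.
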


\begin{nota}
If $G$ is a finite group, denote by  $\textbf{e}_\textbf{G}$  the ideal 
of $A$ generated by $e_G\in E_G$.
\end{nota}
\begin{lem}\label{lema menor}
Let $H$ and $K$ be  finite groups, and let  $e_H\in E_H$ and  $e_K\in E_K$. We have $\emph{\textbf{e}}_\emph{\textbf{H}}\subseteq \emph{\textbf{e}}_\emph{\textbf{K}}$  if only if  there exists \texttt{$e_{H\times K} \in E_{H\times K}$} such that   
\begin{align*}
e_H \cdot A( Def^{H\times K}_{ H}) \left( e_{H\times K} \cdot A( Inf^{H\times K }_{K} ) (e_K)\right) \neq 0 
\end{align*}

\end{lem}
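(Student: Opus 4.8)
The statement to prove is a characterization of the containment $\mathbf{e}_\mathbf{H} \subseteq \mathbf{e}_\mathbf{K}$ between the ideals generated by primitive idempotents. The plan is to use the description of the ideal generated by an element given in Remark~\ref{inter}, together with the reformulation of $A$-modules via the category $\mathcal{P}_A$: the ideal $\mathbf{e}_\mathbf{K}$ evaluated at $H$ is $\sum_{\gamma \in RB(K,\text{--})\text{-stuff}} A(\alpha)(e_K)$ as $\alpha$ runs over $Hom_{\mathcal{P}_A}(K,H) = A(H \times K)$. The key point is that $\mathbf{e}_\mathbf{H} \subseteq \mathbf{e}_\mathbf{K}$ holds if and only if $e_H \in \mathbf{e}_\mathbf{K}(H)$, since $\mathbf{e}_\mathbf{H}$ is by definition the \emph{smallest} ideal containing $e_H$ in degree $H$; so the whole problem reduces to deciding membership $e_H \in \mathbf{e}_\mathbf{K}(H)$.

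\smallskip
\textbf{Reduction to a single idempotent.} First I would spell out $\mathbf{e}_\mathbf{K}(H)$ explicitly. Using the $\mathcal{P}_A$-description of the $A$-module $A$, an element of $\mathbf{e}_\mathbf{K}(H)$ is an $R$-linear combination of elements $\alpha \circ e_K$ with $\alpha \in A(H \times K) = Hom_{\mathcal{P}_A}(K,H)$, where by Lemma~\ref{a comp b} one has
\begin{align*}
\alpha \circ e_K = A(Def^{H\times K}_{H})\!\left( A(Inf^{H\times K}_{H\times K})(\alpha) \cdot A(Inf^{H\times K}_{K})(e_K)\right) = A(Def^{H\times K}_{H})\!\left( \alpha \cdot A(Inf^{H\times K}_{K})(e_K)\right).
\end{align*}
Since $A(H\times K)$ is split semisimple commutative, $\alpha$ is a $\mathbb{K}$-linear combination of elements of $E_{H\times K}$, so $\mathbf{e}_\mathbf{K}(H)$ is spanned by the elements $A(Def^{H\times K}_{H})\big(e_{H\times K}\cdot A(Inf^{H\times K}_{K})(e_K)\big)$ with $e_{H\times K}\in E_{H\times K}$. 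Thus $e_H \in \mathbf{e}_\mathbf{K}(H)$ iff $e_H$ lies in the $\mathbb{K}$-span of this finite family of elements of $A(H)$.

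\smallskip
\textbf{From span-membership to nonvanishing of a product.} Now I would exploit that $e_H$ is a \emph{primitive} idempotent: in the split semisimple commutative algebra $A(H)$, multiplication by $e_H$ is the projection onto the one-dimensional ideal $\mathbb{K}e_H$. Hence $e_H$ belongs to the $\mathbb{K}$-span of a family $\{x_i\}\subseteq A(H)$ if and only if $e_H \cdot x_i \neq 0$ for at least one $i$ (if all $e_H\cdot x_i=0$ then $e_H\cdot(\sum \lambda_i x_i)=0$ for every combination, but $e_H\cdot e_H = e_H\neq 0$; conversely if some $e_H\cdot x_i = k e_H$ with $k\neq 0$, then $e_H = k^{-1} e_H \cdot x_i$ lies in the ideal generated by $x_i$, though one must still check $x_i$ itself — actually $e_H \cdot x_i \in \mathbb{K}e_H$ always, and $e_H\cdot x_i \neq 0$ forces $e_H \in \mathbb{K}(e_H\cdot x_i)$, and $e_H \cdot x_i = e_H \cdot x_i$ is itself in $\mathbf{e}_\mathbf{K}(H)$ since $\mathbf{e}_\mathbf{K}(H)$ is an $A(H)$-submodule, being the $H$-evaluation of an ideal and closed under the ring action via $\Delta$-restriction). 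Applying this with $x_{e_{H\times K}} = A(Def^{H\times K}_{H})\big(e_{H\times K}\cdot A(Inf^{H\times K}_{K})(e_K)\big)$ yields exactly the stated criterion.

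\smallskip
\textbf{Main obstacle.} The delicate point is justifying that $\mathbf{e}_\mathbf{K}(H)$ is closed under multiplication by $A(H)$ — i.e., that it is genuinely an $A(H)$-submodule and not merely the $\mathbb{K}$-span I described — so that from $e_H\cdot x \neq 0$ one may legitimately conclude $e_H \in \mathbf{e}_\mathbf{K}(H)$. This follows because $\mathbf{e}_\mathbf{K}$ is an ideal of $A$, hence an $A$-submodule, and the ring multiplication on $A(H)$ is realized (Lemma~\ref{equi de defi}) by $A(Iso^H_{\Delta(H)}\circ Res^{H\times H}_{\Delta(H)})$ applied to the external product, which preserves $\mathbf{e}_\mathbf{K}$ by the ideal property; one should also confirm $\mathbf{e}_\mathbf{H}\subseteq\mathbf{e}_\mathbf{K}$ is equivalent to $e_H\in\mathbf{e}_\mathbf{K}(H)$, which is immediate from the minimality in the definition of $\mathbf{e}_\mathbf{H}$ and the fact that $\mathbf{e}_\mathbf{K}$ is an ideal containing $e_K$ in degree $K$. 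The remaining direction — given $e_{H\times K}$ with the product nonzero, produce the containment — then just runs the span argument backwards, and the converse direction is the observation that if $e_H\cdot x = 0$ for \emph{every} $e_{H\times K}$ then $e_H\notin\mathbf{e}_\mathbf{K}(H)$, contradicting $\mathbf{e}_\mathbf{H}\subseteq\mathbf{e}_\mathbf{K}$.
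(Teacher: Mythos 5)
Your proposal is correct and follows essentially the same route as the paper: both reduce the containment $\mathbf{e}_\mathbf{H}\subseteq\mathbf{e}_\mathbf{K}$ to the membership $e_H\in\mathbf{e}_\mathbf{K}(H)=A(H\times K)\circ e_K$ (Remark~\ref{inter} plus the $\mathcal{P}_A$ description), decompose the morphisms into primitive idempotents of $A(H\times K)$, use that multiplying by the primitive idempotent $e_H$ kills everything outside $\mathbb{K}e_H$, and translate $e_{H\times K}\circ e_K$ via Lemma~\ref{a comp b} with $G=1$. The only difference is cosmetic: where the paper cites Bouc (Proposition 8.6.1 and Remark 3.2.9) for the converse, you verify directly that $\mathbf{e}_\mathbf{K}(H)$ is an $A(H)$-submodule so that $e_H=k^{-1}e_H\cdot x$ lies in it, which is a legitimate filling-in of the same step.
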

\begin{proof}
$\Rightarrow ]$
By Remark \ref{inter},  we have that 
\begin{align*}
e_H\in \textbf{e}_\textbf{K}(H)= Hom_{\mathcal{P}_A}(K,H)e_K
\end{align*}
where $Hom_{\mathcal{P}_A}(K,H)=A(H\times K)$. Now, by the  hypothesis over  $A$, there exists  $e_{H\times K} \in E_{H\times K}$  such that 
\begin{equation} \label{e_K(H)}
e_H\cdot(e_{H\times K }\circ e_k)\neq 0.
\end{equation}
By Lemma \ref{a comp b} with $G=1$, one has 
\begin{align*}
e_H \cdot A(Def^{H\times K}_H)\left( e_{H\times K} \cdot A (Inf^{H\times K}_K)(e_K) \right) \neq 0.
\end{align*}
$\Leftarrow ]$ By Proposition 8.6.1 of \cite{serge-biset} and  Remark 3.2.9 of \cite{serge-biset}, we have $e_H \in \textbf{e}_\textbf{K} (H)$.
\end{proof}

\begin{lem} \label{res-ideal}
Let  $G$ be a finite group and   $e_G \in E_G$, then there exist a subgroup $H$ of  $G$ and $e_H \in \underline{E_H}$, such that $\emph{\textbf{e}}_\emph{\textbf{G}} =\emph{\textbf{e}}_\emph{\textbf{H}}$.
\end{lem}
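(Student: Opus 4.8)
The plan is to choose $H$ to be a minimal subgroup of $G$ with respect to the condition $A(Res^G_H)(e_G) \neq 0$, exactly the group produced in Lemma \ref{Res-Ind}. From that lemma we obtain an idempotent $e_H \in \underline{E_H}$ and a nonzero scalar $\alpha \in \mathbb{K}$ with $e_G = \alpha\, A(Ind^G_H)(e_H)$; moreover, inspecting the proof of Lemma \ref{Res-Ind}, this $e_H$ is the idempotent chosen there so that $e_H \cdot A(Res^G_H)(e_G) = e_H$. I claim these two facts already force $\textbf{e}_\textbf{G} = \textbf{e}_\textbf{H}$, and the proof splits into the two inclusions.

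For $\textbf{e}_\textbf{G} \subseteq \textbf{e}_\textbf{H}$: the ideal $\textbf{e}_\textbf{H}$ is in particular a biset subfunctor of $A$ whose evaluation at $H$ contains $e_H$, so $A(Ind^G_H)(e_H)$ lies in $\textbf{e}_\textbf{H}(G)$, hence so does $e_G = \alpha\, A(Ind^G_H)(e_H)$. Since $\textbf{e}_\textbf{G}$ is, by definition together with Remark \ref{inter}, the smallest ideal of $A$ whose $G$-evaluation contains $e_G$, this yields $\textbf{e}_\textbf{G} \subseteq \textbf{e}_\textbf{H}$.

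For $\textbf{e}_\textbf{H} \subseteq \textbf{e}_\textbf{G}$: I would first record that any ideal $I$ of $A$ is closed under the ring multiplication of each evaluation -- given $v \in A(H)$ and $x \in I(H)$, the defining property of an ideal gives $v \times x \in I(H \times H)$, and then $v \cdot x = A(Iso^H_{\Delta(H)} \circ Res^{H\times H}_{\Delta(H)})(v \times x) \in I(H)$ because $I$ is a biset subfunctor, using the formula of Lemma \ref{equi de defi}. Applying this with $I = \textbf{e}_\textbf{G}$, which is a biset subfunctor containing $e_G$, we get $A(Res^G_H)(e_G) \in \textbf{e}_\textbf{G}(H)$ and therefore $e_H = e_H \cdot A(Res^G_H)(e_G) \in \textbf{e}_\textbf{G}(H)$. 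By minimality of $\textbf{e}_\textbf{H}$ this gives $\textbf{e}_\textbf{H} \subseteq \textbf{e}_\textbf{G}$, and the two inclusions together prove the lemma.

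The argument is essentially bookkeeping with the two structures carried by an ideal: its biset-subfunctor part, which disposes of the $Ind$ and $Res$ steps, and its module part, which gives closure under multiplication inside a fixed evaluation. The only delicate point is the relation $e_H \cdot A(Res^G_H)(e_G) = e_H$, which is not in the statement of Lemma \ref{Res-Ind} but is built into its proof; if one prefers not to quote the proof, one re-derives $e_H \cdot A(Res^G_H)(e_G) \neq 0$ directly from $e_G = \alpha\, A(Ind^G_H)(e_H)$ and $e_H \in \underline{E_H}$ via the Mackey formula exactly as in that proof, and then a nonzero scalar multiple of $e_H$ lying in $\textbf{e}_\textbf{G}(H)$ still forces $e_H \in \textbf{e}_\textbf{G}(H)$, since the evaluations are $\mathbb{K}$-algebras.
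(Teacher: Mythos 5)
Your proposal is correct and follows essentially the same route as the paper: invoke Lemma \ref{Res-Ind} to get $e_G=\alpha\,A(Ind^G_H)(e_H)$ together with $e_H\cdot A(Res^G_H)(e_G)=e_H$, and read off the two inclusions $\emph{\textbf{e}}_\emph{\textbf{G}}\subseteq\emph{\textbf{e}}_\emph{\textbf{H}}$ and $\emph{\textbf{e}}_\emph{\textbf{H}}\subseteq\emph{\textbf{e}}_\emph{\textbf{G}}$. You are in fact slightly more careful than the paper, both in justifying that an ideal is closed under the internal ring product of each evaluation and in noting that the relation $e_H\cdot A(Res^G_H)(e_G)=e_H$ comes from the proof of Lemma \ref{Res-Ind} rather than its statement.
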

\begin{proof}
Let  $e_G \in E_G$.  By Lemma \ref{Res-Ind}, there  exist  a subgroup  $H$ of  $G$, $e_H \in \underline{E_H}$ and $\alpha \in \mathbb{K}$, such that 
\begin{align*}
e_H\cdot A(Res^G_H)(e_G)&= e_H\\
\alpha \cdot A(Ind^G_H)(e_H)&=e_G.
\end{align*} 
This means that, 
\begin{align*}
e_H=e_H\cdot A(Res^G_H)(e_G) \in \textbf{e}_\textbf{G}(H)
\end{align*}
then  $\textbf{e}_\textbf{H}$ is an $A$-submodule of  $\textbf{e}_\textbf{G}$. On the other hand,
\begin{align*}
e_G=\alpha \cdot A(Ind^G_H)(e_H) \in \textbf{e}_\textbf{H}(G),
\end{align*}
then  $\textbf{e}_\textbf{G}$ is an $A$-submodule of  $\textbf{e}_\textbf{H}$.  Thus  $\textbf{e}_\textbf{G}=\textbf{e}_\textbf{H}$.
\end{proof}

\begin{lem} \label{defl-ideal}
Let  $H$ be  a finite group  and  $e_H\in E_H$, then there exist  a normal subgroup $N$ of $H$  and  $e_{H/N}\in \underline{\underline{E_{H/N}}}$, such that  $\emph{\textbf{e}}_\emph{\textbf{H} }= \emph{\textbf{e}}_\emph{\textbf{H/N}}$.
\end{lem}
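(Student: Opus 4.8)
The plan is to mirror the proof of Lemma \ref{res-ideal}, replacing the use of Lemma \ref{Res-Ind} by its deflation counterpart, Lemma \ref{lemadefla}. I would start with $e_H \in E_H$ and let $N$ be a normal subgroup of $H$ that is maximal with respect to $A(Def^H_{H/N})(e_H) \neq 0$ (such an $N$ exists since $A(Def^H_{H/1})(e_H) = e_H \neq 0$). By Lemma \ref{lemadefla} there are $e_{H/N} \in \underline{\underline{E_{H/N}}}$ and $\alpha \in \mathbb{K}$ with $\alpha \neq 0$ such that
\begin{align*}
e_{H/N} &= \alpha\, A(Def^H_{H/N})(e_H),\\
e_H &= e_H \cdot A(Inf^H_{H/N})(e_{H/N}).
\end{align*}

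First I would establish $\textbf{e}_\textbf{H/N} \subseteq \textbf{e}_\textbf{H}$. Since $\textbf{e}_\textbf{H}$ is an ideal of $A$, it is in particular a biset subfunctor, hence stable under the operation $A(Def^H_{H/N})$. As $e_H \in \textbf{e}_\textbf{H}(H)$, the first displayed equality gives $e_{H/N} = \alpha\, A(Def^H_{H/N})(e_H) \in \textbf{e}_\textbf{H}(H/N)$. By the description of the generated submodule in Remark \ref{inter} (the ideal generated by $e_{H/N}$ being the smallest one containing $e_{H/N}$ in its evaluation at $H/N$), this forces $\textbf{e}_\textbf{H/N} \subseteq \textbf{e}_\textbf{H}$.

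Next I would prove the reverse inclusion. The element $e_{H/N}$ lies in $\textbf{e}_\textbf{H/N}(H/N)$, so applying $A(Inf^H_{H/N})$ (again because an ideal is a biset subfunctor) gives $A(Inf^H_{H/N})(e_{H/N}) \in \textbf{e}_\textbf{H/N}(H)$. Since $\textbf{e}_\textbf{H/N}(H)$ is an ideal of the $\mathbb{K}$-algebra $A(H)$ — equivalently, using that the module product $A(H) \times \textbf{e}_\textbf{H/N}(H) \to \textbf{e}_\textbf{H/N}(H \times H)$ followed by $A(Iso^H_{\Delta(H)} \circ Res^{H\times H}_{\Delta(H)})$ lands in $\textbf{e}_\textbf{H/N}(H)$ by Lemma \ref{equi de defi} — the second displayed equality gives
\begin{align*}
e_H = e_H \cdot A(Inf^H_{H/N})(e_{H/N}) \in \textbf{e}_\textbf{H/N}(H).
\end{align*}
Hence $\textbf{e}_\textbf{H} \subseteq \textbf{e}_\textbf{H/N}$, and combining the two inclusions yields $\textbf{e}_\textbf{H} = \textbf{e}_\textbf{H/N}$.

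I expect no serious obstacle here: the entire content is carried by Lemma \ref{lemadefla}, and what remains is the bookkeeping that an ideal of $A$, being a biset subfunctor closed under the algebra multiplication on each evaluation, absorbs $Def$, $Inf$ and products with arbitrary elements of $A(H)$. The only point that deserves a moment's care is checking that the identity $e_H = e_H \cdot A(Inf^H_{H/N})(e_{H/N})$ is invoked in the correct ring/module-product form so that it genuinely witnesses $e_H \in \textbf{e}_\textbf{H/N}(H)$; this is handled via Lemma \ref{equi de defi} exactly as the analogous step using the Frobenius/Mackey machinery in the proof of Lemma \ref{res-ideal}.
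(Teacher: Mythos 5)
Your proof is correct and follows essentially the same route as the paper: both invoke Lemma \ref{lemadefla} to obtain the identities $e_{H/N} = \alpha\, A(Def^H_{H/N})(e_H)$ and $e_H = e_H \cdot A(Inf^H_{H/N})(e_{H/N})$, and then read off the two inclusions $\textbf{e}_\textbf{H/N} \subseteq \textbf{e}_\textbf{H}$ and $\textbf{e}_\textbf{H} \subseteq \textbf{e}_\textbf{H/N}$. Your write-up merely spells out in more detail the closure properties of an ideal (under $Def$, $Inf$, and the ring product of Lemma \ref{equi de defi}) that the paper leaves implicit.
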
 
\begin{proof}
By Lemma \ref{lemadefla},  there exist    a normal subgroup $N$ of  $H$,  $e_{H/N} \in \underline{\underline{E_{H/N}}}$ and $0 \neq\lambda \in \mathbb{ K}$, such that  
\begin{align*}
e_H &=e_H \cdot A(Inf_{H/N}^H)(e_{H/N}),\\
e_{H/N} &=  \lambda A(Def_{H/N}^H)(e_H).
\end{align*}
 This means that   $e_{H/N} \in \textbf{e}_\textbf{H} (H/N)$, it follows that $ \textbf{e}_\textbf{H/N} \subseteq \textbf{e}_\textbf{H}$.\\
  On the other hand  $e_H \in \textbf{e}_\textbf{H/N} (H)$, then $\textbf{e}_\textbf{H} \subseteq \textbf{e}_\textbf{H/N}$. Thus  $ \textbf{e}_\textbf{H/N} = \textbf{e}_\textbf{H}$.
\end{proof}

\begin{lem} \label{Ideal-Agrupo}
Let  $G$  be a finite group  and $e_G\in E_G$, then there exist an $MC$-group  $H$ and  $e_H$  element of   $ \underline{E_H} \cap \underline{\underline{E_H}}$, such that  $\emph{\textbf{e}}_\emph{\textbf{G}}=\emph{\textbf{e}}_\emph{\textbf{H}}$.
\end{lem}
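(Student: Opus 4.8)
The plan is to iterate Lemmas \ref{res-ideal} and \ref{defl-ideal}, using the group order $|G|$ as a termination measure; formally I would argue by strong induction on $|G|$. Given $e_G \in E_G$, first apply Lemma \ref{res-ideal} to produce a subgroup $H_1 \leq G$ and $e_{H_1} \in \underline{E_{H_1}}$ with $\textbf{e}_G = \textbf{e}_{H_1}$, and then apply Lemma \ref{defl-ideal} to $e_{H_1}$ to produce a normal subgroup $N \unlhd H_1$ and $e_{H_1/N} \in \underline{\underline{E_{H_1/N}}}$ with $\textbf{e}_{H_1} = \textbf{e}_{H_1/N}$; thus $\textbf{e}_G = \textbf{e}_{H_1/N}$.

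Next I would split into two cases. If $H_1 \neq G$ or $N \neq 1$, then $|H_1/N| < |G|$ (since $|H_1/N| \leq |H_1| \leq |G|$ with at least one of the two inequalities strict), and the induction hypothesis applied to $e_{H_1/N} \in E_{H_1/N}$ yields an $MC$-group $L$ with an idempotent $e_L \in \underline{E_L} \cap \underline{\underline{E_L}}$ such that $\textbf{e}_{H_1/N} = \textbf{e}_L$; as all ideals occurring coincide, $\textbf{e}_G = \textbf{e}_L$ and we are done. In the remaining terminal case $H_1 = G$ and $N = 1$, the bisets $Ind^G_G$ and $Def^G_{G/1}$ are the identity biset on $G$, so the formulas underlying Lemmas \ref{res-ideal} and \ref{defl-ideal}, namely $e_G = \alpha\, A(Ind^G_{H_1})(e_{H_1})$ from Lemma \ref{Res-Ind} and $e_{H_1/N} = \lambda\, A(Def^{H_1}_{H_1/N})(e_{H_1})$ from Lemma \ref{lemadefla}, collapse --- using primitivity and orthogonality of the idempotents --- to $\alpha = \lambda = 1$ and $e_G = e_{H_1} = e_{H_1/N}$. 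Hence $e_G \in \underline{E_{H_1}} = \underline{E_G}$ and $e_G \in \underline{\underline{E_{H_1/N}}} = \underline{\underline{E_G}}$, so $e_G \in \underline{E_G} \cap \underline{\underline{E_G}}$, which is therefore nonempty; thus $G$ itself is an $MC$-group, and we take $H = G$ with the idempotent $e_G$.

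I expect the only delicate point to be the terminal case: one has to notice that the two reductions of Lemmas \ref{res-ideal} and \ref{defl-ideal} can fail to strictly shrink $G$ only when the relevant induction and deflation bisets are the identity, and then invoke primitivity to see that the starting idempotent already lies in $\underline{E_G} \cap \underline{\underline{E_G}}$. As an alternative that avoids the induction entirely, one could show that $A(Def^H_{H/N})$ maps $\underline{E_H}$ into $\underline{E_{H/N}}$ --- a consequence of the relation $Res^{H/N}_{L/N} \circ Def^H_{H/N} = Def^L_{L/N} \circ Res^H_L$ for $N \leq L \leq H$ --- so that one application of Lemma \ref{defl-ideal} right after Lemma \ref{res-ideal} already produces an idempotent in $\underline{E_{H_1/N}} \cap \underline{\underline{E_{H_1/N}}}$.
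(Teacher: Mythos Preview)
Your inductive argument is correct, and the terminal-case analysis is sound: when $H_1=G$ and $N=1$ the underlying formulas from Lemmas~\ref{Res-Ind} and~\ref{lemadefla} do collapse as you say, and primitivity forces $e_G=e_{H_1}=e_{H_1/N}$. However, the paper does not argue by induction; it takes precisely the route you flag as an ``alternative'' at the end. Namely, it applies Lemma~\ref{res-ideal} once to obtain $K\leq G$ and $e_K\in\underline{E_K}$ with $\textbf{e}_\textbf{G}=\textbf{e}_\textbf{K}$, then Lemma~\ref{defl-ideal} once to obtain $N\trianglelefteq K$ and $e_{K/N}=\lambda\,A(Def^K_{K/N})(e_K)\in\underline{\underline{E_{K/N}}}$ with $\textbf{e}_\textbf{K}=\textbf{e}_\textbf{K/N}$, and then checks directly that $e_{K/N}\in\underline{E_{K/N}}$ via the biset relation $Res^{K/N}_{M/N}\circ Def^K_{K/N}\cong Def^M_{M/N}\circ Res^K_M$ combined with $A(Res^K_M)(e_K)=0$ for $M\lneq K$. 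So a single pass through each lemma already lands in $\underline{E_{K/N}}\cap\underline{\underline{E_{K/N}}}$. Your inductive route has the virtue of treating Lemmas~\ref{res-ideal} and~\ref{defl-ideal} as black boxes and never reopening the biset calculus, at the price of the terminal-case bookkeeping; the paper's route is shorter and more explicit but relies on the commutation relation you mention. It is worth noting that you had already identified the paper's argument yourself.
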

\begin{proof}
By Lemma \ref{res-ideal},  there exist   $K\leq G$ and $e_{K} \in \underline{E_{K}}$  such that   $\textbf{e}_\textbf{G}= \textbf{e}_\textbf{K}$. On the other hand, by Lemma \ref{defl-ideal}, there exist $N \trianglelefteq K$, $e_{ K/N}\in \underline{\underline{E_{K/N}}}$ and $0\neq \lambda \in \mathbb{K}$, such that  $e_{K/N}=\lambda  A(Def^K_{K/N})(e_{K}) $,  and  $\textbf{e}_\textbf{K/N}= \textbf{e}_\textbf{K}= \textbf{e}_\textbf{G}$. 

Let  $M/N  \lneq K/N$, by the relation 1.1.3 (2-f) of  \cite{serge-biset}, we have
\begin{align*}
A(Res_{M/N} ^{K/N}) e_{ K/N} &= A(Res_{M/N} ^{K/N}) \lambda A(Def ^K_{K/N})(e_K)\\
&= \lambda \cdot A(Res_{M/N} ^{K/N} \circ Def ^K_{K/N})(e_K)\\
&=\lambda \cdot A(Def^M_{M/N} \circ Res_M^K) )(e_K)\\
&=0
\end{align*}
since $  A(Res_M^K) (e_K) = 0 $. Thus $e_{K/N} \in \underline{E_{K/N}} \cap \underline{\underline{E_{K/N}}}$.

\end{proof}

\begin{lem}
Let $F$ be an ideal of $A$, and $H$  minimal group for $F$ i.e. $F(H)\neq 0$ and $F(T)=0$ for any finite group $T$  such that $|T|<|H|$, then  $H$ is an  $MC$-group. 
\end{lem}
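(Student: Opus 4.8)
The plan is to show that if $H$ is a minimal group for the ideal $F$, then some primitive idempotent of $A(H)$ lies in $\underline{E_H}\cap\underline{\underline{E_H}}$, which by Definition \ref{A-group} is exactly what it means for $H$ to be an $MC$-group. The first step is to extract an idempotent carrying the nonvanishing of $F$ at $H$. Since $F(H)\neq 0$ and $A(H)=\bigoplus_{e\in E_H}\mathbb{K}e$ is split semisimple commutative, $F(H)$ is an ideal of the algebra $A(H)$ (being the $H$-evaluation of the $A$-submodule $F$ of $A$, it is in particular an $A(H)$-submodule of $A(H)$ via the ring structure of Definition \ref{green}, by the same argument used implicitly throughout Section \ref{ideals and idem}), hence $F(H)$ is spanned by a nonempty subset of $E_H$. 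Pick $e_H\in E_H$ with $e_H\in F(H)$.

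Next I would show $e_H\in\underline{E_H}$. Suppose not: then there is a proper subgroup $K<H$ with $A(Res^H_K)(e_H)\neq 0$. Since $F$ is a biset subfunctor, $A(Res^H_K)(e_H)\in F(K)$, so $F(K)\neq 0$; but $|K|<|H|$ contradicts the minimality of $H$ for $F$. Hence $A(Res^H_K)(e_H)=0$ for every proper subgroup $K$, i.e. $e_H\in\underline{E_H}$. The argument for $\underline{\underline{E_H}}$ is of the same shape: if $N\trianglelefteq H$ with $N\neq 1$ and $A(Def^H_{H/N})(e_H)\neq 0$, then $A(Def^H_{H/N})(e_H)\in F(H/N)$, so $F(H/N)\neq 0$; since $|H/N|<|H|$ this again contradicts minimality. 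Therefore $A(Def^H_{H/N})(e_H)=0$ for all nontrivial normal $N$, so $e_H\in\underline{\underline{E_H}}$. Combining, $e_H\in\underline{E_H}\cap\underline{\underline{E_H}}\neq\emptyset$, so $H$ is an $MC$-group.

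The only point requiring a little care — and the one I expect to be the main obstacle — is the very first claim, that $F(H)$ is a sum of primitive idempotents of $A(H)$. This needs $F(H)$ to be an ideal of the commutative algebra $A(H)$, not merely an $R$-submodule; this follows because for $a\in A(H)$ and $m\in F(H)$ the ring product $a\cdot m$ is obtained (Lemma \ref{equi de defi}) from $a\times m\in F(H\times H)\cap(\text{image of }A(H)\times F(H))\subseteq F(H\times H)$ by applying a biset operation, and $F$ is closed under biset operations; so $a\cdot m\in F(H)$. Once $F(H)$ is known to be an ideal of a split semisimple commutative $\mathbb{K}$-algebra, it is automatically spanned by the primitive idempotents it contains, and since $F(H)\neq 0$ at least one such idempotent exists. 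Everything after that is the two parallel minimality arguments above, which are routine given Theorem \ref{teodeoper} is not even needed here — only the functoriality of $F$ under $Res$ and $Def$.
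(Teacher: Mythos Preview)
Your proof is correct and follows essentially the same approach as the paper's: extract a primitive idempotent $e_H\in F(H)$ from the fact that $F(H)$ is a nonzero ideal of the split semisimple commutative algebra $A(H)$, then use minimality of $H$ together with closure of $F$ under $Res$ and $Def$ to force $e_H\in\underline{E_H}\cap\underline{\underline{E_H}}$. The only difference is that you spell out the justification for $F(H)$ being an ideal of $A(H)$, which the paper simply asserts.
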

\begin{proof}
By hypothesis over $A$, we know
\begin{align*}
F(H)=\sum_{e_H \in \Omega_{F,H} } \mathbb{K} e_H \neq 0 
\end{align*}
where $\Omega_{F,H}$ is a subset of $E_H$, then there exists $e_H \in E_H$  such that  $e_H$ is an element of  $F(H)$. If  $N$ is a non-trivial normal subgroup of   $H$, we have
\begin{align*}
A(Def^H_{H/N})(e_H) \in F(H/N).
\end{align*}
Since $H$ is a minimal group for  $F$, it follows that  $F(H/N)=0$, thus  $$A(Def^H_{H/N})(e_H)=0,$$  this means  that $e_H \in \underline{\underline{E_H}}$. On the other hand, for any proper subgroup $K$ of $H$, one has  $A(Res^H_K) (e_H) \in F(K)=  0 $, then  $ A(Res^H_K) (e_H) =0$. It follows that  $e_H \in \underline{E_H}$. Thus, the idempotent $e_H $ is an element of  $ \underline{E_H}\cap \underline{\underline{E_H}}$, this is equivalent to say that $H$ is an  $MC$-group.
\end{proof}

\section{The ideals of the Green biset functor $A$}  \label{ideals and idem}
Recall:  $ \mathbb{K}$ is a field with characteristic 0  and   $A$ is a Green biset functor such that $A(G)$ is a finite dimensional split semisimple commutative    $\mathbb{K}$-algebra, for any finite group $G$. In  this section, we will show that any ideal $I$ of $A$ is the sum of some ideals generated by idempotents.
\begin{nota}
Let $I$ be an ideal of $A$, we will define $\mathcal{A}_{I}$ by:
\begin{align*}
\mathcal{A}_I:=\lbrace (H, e_H) \mid  H \text{ is an } MC\text{-group of } A, \ e_H \in  \underline{E_H} \cap \underline{\underline{E_H}} \text{ and } e_H \in I(H) \rbrace.
\end{align*}
\end{nota}

\begin{lem} \label{ideal=suma}
Let $I$ be an ideal of $A$, then 
$$
I=\sum_{ (H, e_H) \in \mathcal{A}_I} \emph{\textbf{e}}_\emph{\textbf{H}}.
$$
\end{lem}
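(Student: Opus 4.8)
The plan is to establish the two inclusions separately. For the inclusion $\sum_{(H,e_H)\in\mathcal{A}_I}\textbf{e}_\textbf{H}\subseteq I$, note that each pair $(H,e_H)\in\mathcal{A}_I$ satisfies $e_H\in I(H)$ by definition of $\mathcal{A}_I$. Since $I$ is an ideal of $A$, it is in particular an $A$-submodule of $A$, so the $A$-submodule $\textbf{e}_\textbf{H}$ generated by $e_H$ is contained in $I$; indeed by Remark \ref{inter} we have $\textbf{e}_\textbf{H}(T)=\sum_{\alpha\in Hom_{\mathcal{P}_A}(H,T)}F(\alpha)(e_H)$ with $F=A$, and each such element lies in $I(T)$ because $I$ is stable under the action of the morphisms of $\mathcal{P}_A$. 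Summing over all pairs in $\mathcal{A}_I$ gives the first inclusion.

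For the reverse inclusion $I\subseteq\sum_{(H,e_H)\in\mathcal{A}_I}\textbf{e}_\textbf{H}$, I would argue evaluation by evaluation. Fix a finite group $G$; by the hypothesis on $A$, the algebra $A(G)$ is split semisimple commutative, so $I(G)$, being an ideal of $A(G)$ (it is stable under multiplication by $A(G)$ via the Frobenius/module structure), is spanned over $\mathbb{K}$ by a subset of $E_G$. Thus it suffices to show that every primitive idempotent $e_G\in E_G$ lying in $I(G)$ belongs to $\left(\sum_{(H,e_H)\in\mathcal{A}_I}\textbf{e}_\textbf{H}\right)(G)$. Given such an $e_G$, apply Lemma \ref{Ideal-Agrupo}: there exist an $MC$-group $H$ and an idempotent $e_H\in\underline{E_H}\cap\underline{\underline{E_H}}$ with $\textbf{e}_\textbf{G}=\textbf{e}_\textbf{H}$. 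The key point is then that $(H,e_H)\in\mathcal{A}_I$, i.e. that $e_H\in I(H)$: this holds because $e_H\in\textbf{e}_\textbf{H}(H)=\textbf{e}_\textbf{G}(H)\subseteq I(H)$, the last inclusion following from $e_G\in I(G)$ together with the fact (first inclusion's argument) that $\textbf{e}_\textbf{G}\subseteq I$ whenever $e_G\in I(G)$. Hence $e_G\in\textbf{e}_\textbf{G}(G)=\textbf{e}_\textbf{H}(G)\subseteq\left(\sum_{(K,e_K)\in\mathcal{A}_I}\textbf{e}_\textbf{K}\right)(G)$, and since this holds for every such $e_G$ and every $G$, we conclude $I\subseteq\sum_{(H,e_H)\in\mathcal{A}_I}\textbf{e}_\textbf{H}$.

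Combining the two inclusions yields the claimed equality. The main obstacle — and the step deserving the most care — is the verification that $I(G)$ is genuinely spanned by primitive idempotents of $A(G)$: one must check that $I(G)$ is not merely an $R$-submodule but an honest ideal of the ring $A(G)$, which uses that $I$ is a two-sided ideal of the Green biset functor $A$ and the translation (via Lemma \ref{equi de defi} and the Frobenius identities) between the functorial module structure $A(G)\times I(G)\to I(G\times G)$ and the internal ring multiplication on $A(G)$; once this is in place, split semisimplicity and commutativity of $A(G)$ force $I(G)=\bigoplus_{e\in\Omega}\mathbb{K}e$ for some $\Omega\subseteq E_G$, and the rest is a direct appeal to Lemma \ref{Ideal-Agrupo} and Remark \ref{inter}.
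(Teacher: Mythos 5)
Your proposal is correct and follows essentially the same route as the paper: the easy inclusion from the definition of $\mathcal{A}_I$ together with the submodule property, and the reverse inclusion by decomposing each $I(G)$ as a span of primitive idempotents (using that $I(G)$ is an ideal of the split semisimple commutative algebra $A(G)$) and then invoking Lemma \ref{Ideal-Agrupo} to replace each $\textbf{e}_\textbf{G}$ by an equal $\textbf{e}_\textbf{K}$ with $(K,e_K)\in\mathcal{A}_I$. Your extra remark spelling out why $I(G)$ is genuinely a ring ideal of $A(G)$ (via Lemma \ref{equi de defi}) is a worthwhile clarification of a step the paper leaves implicit, but it is not a different argument.
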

\begin{proof}

By definition of $\mathcal{A}_I$, one has $\textbf{e}_\textbf{H} \leq I$ for all $(e_H, H) \in \mathcal{A}_I$. As a consequence, one obtains   $$\sum_{e_H \in \mathcal{A}_I}\textbf{e}_\textbf{H} \leq I.$$ On the other hand, by the hypothesis over $A$, we have  for a finite group $G$, 
$$I(G)= \sum_{ e_G\in \Omega_{I,G}} \mathbb{K} e_G,$$
where $\Omega_{I,G} $ is a subset of $E_G$.  Then $\textbf{ e}_\textbf{G }\leq I$ for all $e_G \in \Omega_{I,G}$. Moreover,  by  Lemma \ref{Ideal-Agrupo}, for all $e_G \in \Omega_{I, G}$  there exists an $MC$-group $K$ and  $e_K \in \underline{E_{K}}\cap \underline{\underline{E_{K}}}$,  such that  $\textbf{e}_\textbf{G} =\textbf{e}_\textbf{K}$. It follows that $\textbf{e}_\textbf{K}  \leq I$, this means that $e_K\in I(K)$,  therefore $(K, e_K) \in \mathcal{A}_I$ and 
 \begin{align*}
 I(G)&= \sum_{ e_G\in \Omega_{I,G}} \mathbb{K} e_G \subseteq \sum_{ e_G\in \Omega_{I,G}} \textbf{e}_\textbf{G}(G)\\
 &= \sum_{ e_G\in \Omega_{I,G}} \textbf{e}_\textbf{K}(G) \leq \sum_{(K, e_K) \in \mathcal{A}_I} \textbf{e}_\textbf{K}(G). 
 \end{align*}

Thus $I \leq \sum_{(K, e_K) \in \mathcal{A}_I} \textbf{e}_\textbf{K}$.

\end{proof}
 
 We define  the class $\mathcal{M}$ by 
 \begin{align*}
 \mathcal{M}:=\lbrace (H, e_H) \mid  H \text{ is an } MC\text{-group  of $A$ and }  e_H\in \underline{E_H} \cap \underline{\underline{E_H}}\rbrace
 \end{align*}

\begin{defi} \label{gg}
 We define the  relation  $\gg$ on $\mathcal{M}$, by $(H, e_H) \gg (K, e_K) $ if only if there exists  $e_{H\times K} \in E_{H\times K}$ such that 
\begin{align*}
e_H \cdot A( Def^{H\times K}_{ H}) \left( e_{H\times K} \cdot A( Inf^{H\times K }_{K} ) (e_K)\right) \neq 0.
\end{align*}
By Lemma \ref{lema menor}, one has  that  $(H, e_H) \gg (K, e_K) $  is equivalent to $ \emph{\textbf{e}}_{\textbf{H}}\subseteq \emph{\textbf{e}}_{\textbf{K}} $, then $(\mathcal{M}, \gg)$ is a preorder. By Chapter 3, Section  1.2. of \cite{bourbaki},  one has an equivalence relation on $\mathcal{M}$, definite by,  for $(H, e_H),\  ( K, e_K)\in \mathcal{M}$.  We say that   $ (H, e_H)$ and $(K, e_K)$ are equivalent if, $( H, e_H) \gg (K, e_K)$ and $ (K, e_K) \gg (H, e_H)$. In this case, we also write \- $(H, e_H)\sim (K, e_K)$. 
\end{defi}
Let $[H, e_H]$ denote  the  equivalence class of  $(H, e_H)\in \mathcal{M}$. In   Remark  \ref{setM} we prove that the equivalence classes of $\mathcal{M}$ form a set and we will denote by    $[\mathcal{M}]$.  Then $ ([\mathcal{M}], \gg)$  is a poset (see \cite{bourbaki}).
\begin{rem}\label{setM}
Let $S$ be a set of representatives of the isomorphism classes of finite groups.  We consider the set  $\mathcal{A}=  \prod_{G \in S} \mathcal{P}(A(G)$  where $\mathcal{P}(A(G))$ denotes the power set of $A(G)$ .  We define the function  
\begin{align*}
\varphi: \mathcal{M} &\longrightarrow \mathcal{A}\\
(H, 	e_H)&\longmapsto (\emph{\textbf{e}}_\textbf{H}(G))_{G\in S}
\end{align*}
Let $(H, e_H) , \  (K,e_K) \in  \mathcal{M} $.  One has   $(H, e_H) \sim (K,e_K)$ if only if $\emph{\textbf{e}}_\textbf{H} = \emph{\textbf{e}}_\textbf{K}$, this means   $\varphi((H, e_H))=\varphi((K,e_K))$. Thus  the function $\varphi :[\mathcal{M}]\longrightarrow \mathcal{A}$ is well defined.  Since, if  two groups $G$ and $H$  are 
isomorphic  as groups then they are isomorphic in $\mathcal{P}_A$  (see Notation 4.1 of  \cite{simpleRomero})  is easy to see that if two ideals coincide on all elements of $S$,  then  they are equal, thus $\varphi$ is injective  i.e. $[\mathcal{M}]$ is a set.
\end{rem}

A subclass 
 $\mathcal{N}$ of $\mathcal{M}$ is said to be closed if  
\begin{align*}
\forall  (H, e_H), \ (K, e_K) \in \mathcal{M},  \ (H, e_H) \gg (K, e_K) \in \mathcal{N} \Rightarrow (H, e_H)\in \mathcal{N}.
\end{align*}
A subset $B$ of $[\mathcal{M}]$ is a closed subset if there exists a closed subclass $\mathcal{N}$ of  $\mathcal{M}$ such that  $B=\mathcal{N} \cap [\mathcal{M}]$.

\begin{thm} \label{thm clasific}
Let  $\mathcal{S}_A$ denote the set of  ideals  of  $A$ (in the proof of this theorem we can see why  $\mathcal{S}_A$ is a set) , ordered by the inclusion  of subfunctors   and  let   $Cl_A$ denote  the set of closed subsets of $[\mathcal{M}]$, ordered by the  inclusion of subsets.\\
Then the map 
\begin{align*}
\Theta: I\longmapsto \lbrace [H, e_H]\in [\mathcal{M}] \mid e_H \in I(H) \rbrace
\end{align*}
is an isomorphism of posets from $\mathcal{S}_A$ to  $Cl_A$. The inverse isomorphism is
the map
\begin{align*}
\Psi: Cl_A &\longrightarrow \mathcal{S}_A\\
B &\longmapsto \sum_{[H, e_H]\in B} \emph{\textbf{e}}_\emph{\textbf{H}}.
\end{align*}
\end{thm}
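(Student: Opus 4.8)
The plan is to show $\Theta$ and $\Psi$ are mutually inverse, order-preserving maps. I will proceed in the following order.

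\textbf{Step 1: $\Theta$ is well defined, i.e. lands in $Cl_A$.} Given an ideal $I$, set $\mathcal{N}_I := \{(H,e_H)\in\mathcal{M}\mid e_H\in I(H)\}$; then $\Theta(I) = \mathcal{N}_I\cap[\mathcal{M}]$. First I check $\mathcal{N}_I$ is well defined on $[\mathcal{M}]$: if $(H,e_H)\sim(K,e_K)$ then $\textbf{e}_\textbf{H}=\textbf{e}_\textbf{K}$, so $e_H\in I(H)$ forces $\textbf{e}_\textbf{H}\subseteq I$ (as $I$ is an ideal and $\textbf{e}_\textbf{H}$ is the ideal generated by $e_H$), hence $\textbf{e}_\textbf{K}\subseteq I$ and $e_K\in\textbf{e}_\textbf{K}(K)\subseteq I(K)$. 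Then I check closedness: if $(H,e_H)\gg(K,e_K)$ with $(K,e_K)\in\mathcal{N}_I$, then by Lemma~\ref{lema menor} $\textbf{e}_\textbf{H}\subseteq\textbf{e}_\textbf{K}\subseteq I$, so $e_H\in I(H)$, i.e. $(H,e_H)\in\mathcal{N}_I$. Thus $\Theta(I)\in Cl_A$.

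\textbf{Step 2: $\Psi$ is well defined, i.e. lands in $\mathcal{S}_A$.} For a closed subset $B$, the sum $\sum_{[H,e_H]\in B}\textbf{e}_\textbf{H}$ is a sum of ideals of $A$, hence an ideal of $A$ (the sum of a family of $A$-submodules is an $A$-submodule, with evaluation at $G$ equal to the sum of the evaluations, by the dual of Remark~\ref{inter}). The sum is over the set $B\subseteq[\mathcal{M}]$, so it is a genuine set-indexed sum; and since each $\textbf{e}_\textbf{H}$ depends only on the class $[H,e_H]$ (Remark~\ref{setM}), $\Psi$ is well defined. Incidentally this also shows $\mathcal{S}_A$ is a set: by Remark~\ref{setM} and Lemma~\ref{ideal=suma} every ideal $I$ equals $\Psi(\Theta'(I))$ for a subclass of the set $[\mathcal{M}]$, so $\mathcal{S}_A$ injects into the power set of $[\mathcal{M}]$.

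\textbf{Step 3: $\Psi\circ\Theta = \mathrm{id}$.} Let $I$ be an ideal. By Step 1, $\Psi(\Theta(I)) = \sum_{[H,e_H]\in\Theta(I)}\textbf{e}_\textbf{H} = \sum_{(H,e_H)\in\mathcal{A}_I}\textbf{e}_\textbf{H}$, since $\Theta(I)$ as a set of representatives is exactly $\mathcal{A}_I$ up to the equivalence (every class in $\Theta(I)$ has a representative $(H,e_H)$ with $H$ an $MC$-group, $e_H\in\underline{E_H}\cap\underline{\underline{E_H}}$ and $e_H\in I(H)$, which is precisely membership in $\mathcal{A}_I$). By Lemma~\ref{ideal=suma} this sum equals $I$.

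\textbf{Step 4: $\Theta\circ\Psi = \mathrm{id}$.} Let $B\in Cl_A$ and put $I:=\Psi(B)$. The inclusion $B\subseteq\Theta(I)$ is immediate: if $[H,e_H]\in B$ then $e_H\in\textbf{e}_\textbf{H}(H)\subseteq I(H)$. For the reverse inclusion, suppose $[K,e_K]\in\Theta(I)$, so $e_K\in I(K) = \sum_{[H,e_H]\in B}\textbf{e}_\textbf{H}(K)$. Since $A(K)$ is split semisimple and $e_K$ is a primitive idempotent, and each $\textbf{e}_\textbf{H}(K)$ is spanned by a subset of $E_K$ (by the hypothesis on $A$ and Remark~\ref{inter}), writing $e_K$ in this sum forces $e_K\in\textbf{e}_\textbf{H}(K)$ for some $[H,e_H]\in B$; hence $\textbf{e}_\textbf{K}\subseteq\textbf{e}_\textbf{H}$, i.e. $(K,e_K)\gg(H,e_H)$ by Lemma~\ref{lema menor}. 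Since $B$ is closed (in the sense defined, the ``upward'' direction being $\gg$) and $[H,e_H]\in B$, we get $[K,e_K]\in B$. This is the step I expect to be the main obstacle: it requires care that the defining closure condition of $Cl_A$ matches exactly the direction $(K,e_K)\gg(H,e_H)$ appearing here, and that ``$e_K$ lies in a sum of $E_K$-spanned subspaces implies $e_K$ lies in one of them'' — which is where split semisimplicity of $A(K)$ and uniqueness of primitive idempotents are genuinely used.

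\textbf{Step 5: Both maps are order-preserving.} If $I\subseteq J$ then clearly $\{[H,e_H]\mid e_H\in I(H)\}\subseteq\{[H,e_H]\mid e_H\in J(H)\}$, so $\Theta$ is monotone; and $B\subseteq B'$ gives $\sum_{B}\textbf{e}_\textbf{H}\subseteq\sum_{B'}\textbf{e}_\textbf{H}$, so $\Psi$ is monotone. Together with Steps 3 and 4, $\Theta$ is an isomorphism of posets with inverse $\Psi$.
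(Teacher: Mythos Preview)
Your proof is correct and follows essentially the same route as the paper's: both verify closedness of $\Theta(I)$ via Lemma~\ref{lema menor}, invoke Lemma~\ref{ideal=suma} for $\Psi\circ\Theta=\mathrm{id}$, and handle the key reverse inclusion in $\Theta\circ\Psi=\mathrm{id}$ by using split semisimplicity to extract a single summand $\textbf{e}_\textbf{H}(K)$ containing the primitive idempotent $e_K$. One small remark: in Step~4 your citation of Remark~\ref{inter} for ``each $\textbf{e}_\textbf{H}(K)$ is spanned by a subset of $E_K$'' is not quite the right reference---the real reason is that $\textbf{e}_\textbf{H}(K)$ is an \emph{ideal} of the ring $A(K)$ (via the Frobenius/functoriality structure), and ideals of a split semisimple commutative algebra are spanned by primitive idempotents; the paper phrases this step equivalently as ``there exists $[K,e_K]\in B$ with $e_H\cdot\textbf{e}_\textbf{K}(H)\neq 0$''.
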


\begin{proof}
Let $I$ be an ideal of $A$. First, show that  $\Theta(I) = [\mathcal{A}_I]:=[\mathcal{M}] \cap \mathcal{A}_I$:
\begin{itemize}
\item  Let $ [H, e_H]\in [\mathcal{M}] \cap  \mathcal{A}_I$. By definition of $\mathcal{A}_I$,  we have  $e_H \in I(H)$, this means that  $[H, e_H]\in \Theta (I)$,  thus $[\mathcal{M}] \cap \mathcal{A}_I\subseteq\Theta(I) $
\item On the other hand,  for all  $[H, e_H]\in \Theta (I)$, one has  $e_H \in I(H)$ and  $[H, e_H]\in [\mathcal{M}]$. It follows that, $H$ is an  $MC$-group and  $e_H \in \underline{E_H} \cap \underline{\underline{E_H}} $, then $[H, e_H] \in [\mathcal{M}] \cap \mathcal{A}_I$. It follows that  $\Theta(I) \subseteq[\mathcal{M}] \cap \mathcal{A}_I$.
\end{itemize}
We next show that $[\mathcal{A}_I]$ is a closed subset.  Let $[H, e_H], \ [K, e_K] \in [\mathcal{M}], $  such that   $[H, e_H] \gg [K, e_K] \in [\mathcal{A}_I] $. Then  $\textbf{e}_\textbf{H} \subseteq \textbf{e}_\textbf{K} \subseteq I$, thus $e_H\in I(H)$ .i.e. $[H, e_H]\in [\mathcal{A}_I]$. \\
 Next we show that the map $\Theta$ is a map of posets. Let  $F$ and  $I$ be ideals of $A$ such that $F\subseteq I$.  Let  $[H, e_H ]\in [\mathcal{A}_F] $,  this means that  $e_H \in F(H)$, as a consequence  $e_H \in I(H)$, therefore $[H, e_H ] \in [\mathcal{A}_I] $. On the other hand, the map $\Psi$
is also obviously a map of posets. \\
 Finally, we show that $\Psi \circ \Theta = 1_{\mathcal{S}_A}$ and $\Theta \circ \Psi = 1_{Cl_A}$.
By Lemma \ref{ideal=suma}, we have  
\begin{align*}
I=\sum_{(K, e_K) \in \mathcal{ A}_I} \textbf{e}_\textbf{K}.
\end{align*}
 and by  Lemma \ref{lema menor}, one has that $\textbf{e}_\textbf{K}=\textbf{e}_\textbf{H}$ if only if  $(H, e_H) \sim (K, e_K)$, it follows that
\begin{align*}
\sum_{(K, e_K) \in \mathcal{ A}_I} \textbf{e}_\textbf{K}=  \sum_{[K, e_K] \in [\mathcal{A}_I]} \textbf{e}_\textbf{K}.
\end{align*}
Therefore  $I=\Psi(\Theta (I))$.\\
On the other hand,  let $B \in Cl_A$. Then,
\begin{align*}
\Theta(\Psi(B)) = \lbrace [H, e_H]\in [\mathcal{M}] \mid e_H \in \sum_{[K, e_K]\in B} \textbf{e}_\textbf{K}(H) \rbrace.
\end{align*}
So, obviously $B \subseteq \Theta(\Psi(B)) $. Conversely, if  $[H, e_H] \in \Theta(\Psi(B))$,  then
\begin{align*}
e_H \in \sum_{[K, e_K]\in B} \textbf{e}_\textbf{K}(H).
\end{align*}
 So there exists  $[K, e_K]\in B$ such that $e_H \cdot \textbf{e}_\textbf{K}(H) \neq 0$, i.e.  $e_H\in \textbf{e}_\textbf{K}(H)$.  Then   $\textbf{e}_\textbf{H} \subseteq \textbf{e}_\textbf{K}$ and by  Lemma \ref{lema menor}, it follows that   $[H, e_H]\gg [K, e_K]$.  Since $B$ is a closet subset, one has  $[H, e_H]\in B$. Then $B=\Theta(\Psi(B))$.
\end{proof}
Note that the poset  $Cl_A$ is a lattice, where the  join is the  union of sets and the meet is the  intersection of sets. \\
Let $F$, $J  \in \mathcal{S}_A$. We define  the sum of ideals $F$ and $J$, denoted by $F+J$, is the functor such that  $F+J (G):=F(G)+J(G)$ in any finite group $G$, and for any finites groups $G$ and  $H$ ,  $\alpha \in Hom_\mathcal{C}(G,H)$, $a\in F(G)$ and $b\in J(G)$,  we define $F+J(\alpha) (a+b):= F(\alpha)(a)+J(\alpha)(b)$, is easy to see the functor $F+J$ is an element of $\mathcal{S}_A$. 
We define the product of the ideals $F$ and $J$, denoted by $FJ$,  like the functor such that 
\begin{cor}
The lattice  $\mathcal{S}_A$ is  a distributive lattice.
\end{cor}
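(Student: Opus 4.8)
The plan is to deduce distributivity of $\mathcal{S}_A$ from the classification obtained in Theorem~\ref{thm clasific}. The starting point is that $Cl_A$ is a lattice, with join given by union and meet by intersection of subsets (as recorded right after Theorem~\ref{thm clasific}), and that $\Theta \colon \mathcal{S}_A \to Cl_A$ is an order isomorphism; hence $\mathcal{S}_A$ is a lattice as well, and $\Theta$ is an isomorphism of lattices, since finite joins and meets are determined purely by the order. As distributivity is invariant under lattice isomorphism, it then suffices to prove that $Cl_A$ is distributive and to transport the distributive identity back along $\Psi=\Theta^{-1}$.

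First I would check that $Cl_A$ is a sublattice of the power-set lattice $(\mathcal{P}([\mathcal{M}]),\cup,\cap)$. Given closed subsets $B_1=\mathcal{N}_1\cap[\mathcal{M}]$ and $B_2=\mathcal{N}_2\cap[\mathcal{M}]$ with $\mathcal{N}_1,\mathcal{N}_2$ closed subclasses of $\mathcal{M}$, one has $B_1\cup B_2=(\mathcal{N}_1\cup\mathcal{N}_2)\cap[\mathcal{M}]$ and $B_1\cap B_2=(\mathcal{N}_1\cap\mathcal{N}_2)\cap[\mathcal{M}]$, so it is enough to see that $\mathcal{N}_1\cup\mathcal{N}_2$ and $\mathcal{N}_1\cap\mathcal{N}_2$ are closed subclasses. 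This is immediate from the definition: if $(H,e_H)\gg(K,e_K)$ and $(K,e_K)$ lies in $\mathcal{N}_1\cup\mathcal{N}_2$ (respectively $\mathcal{N}_1\cap\mathcal{N}_2$), then $(K,e_K)\in\mathcal{N}_i$ for some (respectively every) $i$, hence $(H,e_H)\in\mathcal{N}_i$ by closedness of $\mathcal{N}_i$, so $(H,e_H)$ lies in the union (respectively the intersection). Therefore $Cl_A$ is a sublattice of $\mathcal{P}([\mathcal{M}])$, and since the power-set lattice satisfies the distributive law $X\cap(Y\cup Z)=(X\cap Y)\cup(X\cap Z)$ and any sublattice of a distributive lattice is distributive, $Cl_A$ is distributive.

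It remains to transport this along $\Theta$, for which I would identify the lattice operations on $\mathcal{S}_A$ explicitly. For ideals $F,J$ of $A$ one has $\Theta(F\cap J)=\Theta(F)\cap\Theta(J)$, directly from $(F\cap J)(H)=F(H)\cap J(H)$ (Remark~\ref{inter}), and $\Theta(F+J)=\Theta(F)\cup\Theta(J)$: each $e_H\in\underline{E_H}\cap\underline{\underline{E_H}}$ is a primitive idempotent, and $(F+J)(H)=F(H)+J(H)$ is the $\mathbb{K}$-span of $\Omega_{F,H}\cup\Omega_{J,H}$, so $e_H\in(F+J)(H)$ if and only if $e_H\in F(H)$ or $e_H\in J(H)$. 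Thus $\Theta$ carries $\cap$ to $\cap$ and $+$ to $\cup$; applying $\Psi=\Theta^{-1}$ to the distributive identity in $Cl_A$ yields $I\cap(F+J)=(I\cap F)+(I\cap J)$ for all $I,F,J\in\mathcal{S}_A$, which is the distributivity of $\mathcal{S}_A$.

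The formal skeleton is entirely routine; the only step that genuinely uses the standing hypothesis is the identification $\Theta(F+J)=\Theta(F)\cup\Theta(J)$, where the split semisimplicity of the evaluations $A(G)$ — which forces $I(G)$ to be the $\mathbb{K}$-span of a subset $\Omega_{I,G}\subseteq E_G$ for every ideal $I$ — is precisely what makes the sum of ideals correspond to the union of closed subsets. I expect that identification to be essentially the only point needing care; everything else is the standard distributive law for sets together with a formal consequence of Theorem~\ref{thm clasific}.
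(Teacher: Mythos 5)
Your proof is correct and follows the same route as the paper: the paper's own argument is just the one-line observation that $Cl_A$ is distributive and that Theorem \ref{thm clasific} transports this to $\mathcal{S}_A$. You have simply filled in the details (closure of $Cl_A$ under union and intersection, and the identifications $\Theta(F\cap J)=\Theta(F)\cap\Theta(J)$ and $\Theta(F+J)=\Theta(F)\cup\Theta(J)$, the latter using semisimplicity) that the paper leaves implicit.
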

\begin{proof}
The  lattice  $\mathcal{S}_A$  is distributive, since  the lattice $Cl_A$ is clearly distributive, where the join is the sum of ideals  we define   and the meet is the interception  of ideals. 
\end{proof}
\section{Examples } \label{examples}
In this section, we look at several examples of Green biset functors  such that every evaluation in a finite group   is a finite dimensional split semisimple commutative algebra, and  we will compare the relationship $ \gg $  with the equivalence relations given in each of these  examples by previous authors.

 \subsection{The Burnside Functor }
The first example  is the Burnside functor.  This functor was  studied by Serge Bouc  in \cite{serge-biset}.  By the ghost map (see subsection  2.5.1 of \cite{serge-biset}), one has that  $\mathbb{K} B(G) := \mathbb{K} \otimes_ \mathbb{Z} B(G)$ is  a finite dimensional split semisimple commutative $\mathbb{K}$-algebra, where  $ \mathbb{K}$ is a field with characteristic 0. Then the Burnside functor satisfies our hypothesis.

We recall some results that will be useful when comparing the classifications of ideals.
\begin{thm}[Theorem 2.5.2 of \cite{serge-biset}]
Let $G$ be a finite group. If $H$ is a subgroup of G, denote by $e^G_H$ the element of $\mathbb{K}B(G)$ defined by
\begin{align*}
e^G_H = \frac{1}{|N_G(H)|} \sum_{K\leq H} |K| \mu (K, H) [G/K],
\end{align*}
where $\mu$ is the Möbius function of the poset of subgroups of $G$.\\
Then $e^G_H = e^G_K$ if the subgroups $H$ and $K$ are conjugate in $G$, and the
elements $e^G_H$,  when $H$ runs through a set $ [S(G)]$ of representatives of conjugacy classes of
subgroups of $G$, are the orthogonal  primitive idempotents of the $\mathbb{K}$-algebra
$\mathbb{K}B(G)$.
\end{thm}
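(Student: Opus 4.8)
The plan is to pass to the \emph{mark} (ghost) homomorphism and reduce the statement to a single Möbius--inversion identity in the poset of subgroups of $G$. For a subgroup $L\le G$ write $\Phi_L\colon \mathbb{K}B(G)\to\mathbb{K}$ for the $\mathbb{K}$-algebra homomorphism sending the class of a finite $G$-set $X$ to $|X^L|$; it depends on $L$ only up to $G$-conjugacy. The ghost-map statement recalled above is exactly that the map $\Phi:=(\Phi_L)_{L\in[S(G)]}\colon \mathbb{K}B(G)\to\prod_{L\in[S(G)]}\mathbb{K}$ is an isomorphism of $\mathbb{K}$-algebras (it is an injective ring homomorphism between spaces of the same dimension $|[S(G)]|$, the classes $[G/K]$, $K\in[S(G)]$, forming a basis). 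Under this identification the primitive idempotents of $\mathbb{K}B(G)$ are the coordinate idempotents; that is, for each $H\in[S(G)]$ there is a unique primitive idempotent whose mark $\Phi_L$ equals $1$ if $L$ is $G$-conjugate to $H$ and $0$ otherwise. So it is enough to show that the element $e^G_H$ given by the displayed formula has exactly these marks: uniqueness will then identify $e^G_H$ with that primitive idempotent, give $e^G_H=e^G_K$ when $H$ and $K$ are $G$-conjugate (alternatively this is visible directly, since conjugating by $g\in G$ carries the formula for $e^G_H$ to the one for $e^G_{H^g}$), and exhibit the $e^G_H$, $H\in[S(G)]$, as all of them.

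The one nontrivial computation is the mark $\Phi_L([G/K])=|(G/K)^L|$ of a transitive $G$-set. A coset $gK$ is $L$-fixed precisely when $g^{-1}Lg\le K$, and the set $\{g\in G: g^{-1}Lg\le K\}$ is a union of left cosets of $K$, so
\[
|(G/K)^L|=\tfrac{1}{|K|}\,\#\{g\in G: g^{-1}Lg\le K\}.
\]
Sorting this count by the subgroup $J=g^{-1}Lg$, and using that $\{g: g^{-1}Lg=J\}$ is a coset of $N_G(L)$ (hence of size $|N_G(L)|$) when $J$ is $G$-conjugate to $L$ and is empty otherwise, one obtains
\[
\Phi_L([G/K])=\frac{|N_G(L)|}{|K|}\,\#\{J\le K:\ J\sim_G L\},
\]
where $\sim_G$ denotes $G$-conjugacy.

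Now substitute this into the formula for $e^G_H$: the factors $|K|$ cancel, and interchanging the two summations gives
\begin{align*}
\Phi_L(e^G_H)&=\frac{|N_G(L)|}{|N_G(H)|}\sum_{K\le H}\mu(K,H)\,\#\{J\le K:\ J\sim_G L\}\\
&=\frac{|N_G(L)|}{|N_G(H)|}\sum_{\substack{J\le H\\ J\sim_G L}}\ \sum_{J\le K\le H}\mu(K,H).
\end{align*}
By the defining property of the Möbius function of the subgroup poset, $\sum_{J\le K\le H}\mu(K,H)=\delta_{J,H}$, so the outer sum has a single possibly-nonzero term, $J=H$, which is present exactly when $H\sim_G L$. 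Hence $\Phi_L(e^G_H)$ equals $\tfrac{|N_G(L)|}{|N_G(H)|}=1$ if $L$ is $G$-conjugate to $H$, and $0$ otherwise. By the reduction of the first paragraph, this proves the theorem.

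I expect the main obstacle to be precisely this middle step: correctly evaluating $|(G/K)^L|$ as a transporter count and then recognizing, after substitution into the idempotent formula, that the resulting double sum telescopes through the identity $\sum_{J\le K\le H}\mu(K,H)=\delta_{J,H}$. Everything else — the reduction to marks, the dimension/uniqueness argument for the isomorphism $\Phi$, and the conjugation-invariance of $e^G_H$ — is formal.
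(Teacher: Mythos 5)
This theorem is stated in the paper only as a recalled result, cited from Theorem 2.5.2 of Bouc's book, so there is no in-paper proof to compare against; your argument is correct and is essentially the standard proof found in that reference (compute the marks of $e^G_H$ via the transporter count $|(G/K)^L|=\tfrac{|N_G(L)|}{|K|}\#\{J\le K: J\sim_G L\}$ and collapse the double sum with $\sum_{J\le K\le H}\mu(K,H)=\delta_{J,H}$). One cosmetic remark: you assert injectivity of the mark homomorphism $\Phi$ as known, but your own computation already shows that the $e^G_H$ hit all coordinate idempotents, so $\Phi$ is surjective and hence bijective by the dimension count, making the separate appeal to injectivity unnecessary.
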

If $G$ is a finite group, and $N$ is a normal subgroup of $G$,
denote by $m_{G,N}$  the rational number defined by
\begin{align*}
m_{G,N} = \frac{1}{|G|} \sum_{XN=G}  |X| \mu(X, G),
\end{align*}
where $\mu$ is the Möbius function of the poset of subgroups of $G$. 

\begin{defi} [Definition 5.4.6. of \cite{serge-biset}]
A finite group $G$ is called a $B$-group  if $|G| \neq  0$,  and if for any non-trivial normal subgroup $N$ of $G$, the constant $m_{G,N}$
is equal to zero,  or equivalently $B(Def^G_{G/N} )(e^G_G) = 0$ in $\mathbb{K}B(G/N)$.
The class of all $B$-groups  is denoted by $B$-$grp$.
\end{defi}
\begin{prop}
Let $G$ be a finite group. One has that $G$ is a   $B$-group if only if $G$ is an   $MC$-group.
\end{prop}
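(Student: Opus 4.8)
The plan is to unwind the two definitions and show that both conditions are equivalent to a single statement about the behaviour of $e^G_G$ under deflation. Recall that for the Burnside functor the primitive idempotent $e^G_G$ is exactly the one indexed by the whole group $G$, and the classical formula $B(\mathrm{Def}^G_{G/N})(e^G_G) = m_{G,N}\, e^{G/N}_{G/N}$ (Theorem 5.4.11 of \cite{serge-biset}) says that deflating $e^G_G$ along $N$ yields a scalar multiple of $e^{G/N}_{G/N}$. Moreover one has $B(\mathrm{Res}^G_H)(e^G_G)=0$ for every proper subgroup $H<G$: this is immediate from the fact that $e^G_G$ is the idempotent supported, under the ghost map, only at the coordinate indexed by $G$ itself, and restriction to $H$ only sees coordinates indexed by subgroups of $H$. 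Hence $e^G_G\in\underline{E_G}$ always holds.

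First I would record that consequence: for the Burnside functor, $e^G_G\in\underline{E_G}$ for every finite group $G$ with $|G|\neq 0$ in $\mathbb{K}$ (which is automatic since $\mathbb{K}$ has characteristic $0$). Therefore the condition ``$e^G_G\in\underline{E_G}\cap\underline{\underline{E_G}}$'', i.e. that $G$ is an $MC$-group witnessed by the idempotent $e^G_G$, reduces to the single requirement $e^G_G\in\underline{\underline{E_G}}$, which by definition means $B(\mathrm{Def}^G_{G/N})(e^G_G)=0$ for every non-trivial normal subgroup $N\unlhd G$. By the deflation formula this is precisely the statement that $m_{G,N}=0$ for all such $N$, which is exactly the defining property of a $B$-group. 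This already gives: if $G$ is a $B$-group then $G$ is an $MC$-group, and conversely if $G$ is an $MC$-group witnessed specifically by $e^G_G$ then $G$ is a $B$-group.

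The one genuine gap to close is the ``only if'' direction in full generality: a priori $G$ could be an $MC$-group because \emph{some other} primitive idempotent $e_H\in E_G$ lies in $\underline{E_G}\cap\underline{\underline{E_G}}$, not necessarily $e^G_G$. So I would argue that if $e^G_H\in\underline{E_G}$ for some subgroup $H\leq G$, then $H$ must be conjugate to $G$, i.e. $H=G$. Indeed, by part (a) of Theorem \ref{teodeoper} combined with Lemma \ref{Res-Ind}, an idempotent $e^G_H$ with $e^G_H\in\underline{E_G}$ forces $H$ to be $G$-conjugate to a minimal subgroup realizing the non-vanishing of its own restrictions; but concretely, for the Burnside functor, $B(\mathrm{Res}^G_H)(e^G_H)\neq 0$ (the coordinate of $e^G_H$ at $H$ is nonzero), so if $e^G_H\in\underline{E_G}$ we need $H$ not properly contained in $G$, hence $H=G$. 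Thus the only candidate idempotent witnessing the $MC$-property is $e^G_G$, and the equivalence follows from the previous paragraph.

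The main obstacle is precisely this last uniqueness point — making sure that no ``exotic'' idempotent $e^G_H$ with $H<G$ can lie in $\underline{E_G}$. The cleanest route is the direct computation with the ghost/mark homomorphism: the $K$-th mark of $e^G_H$ is nonzero only when $K$ is $G$-conjugate to $H$, so $B(\mathrm{Res}^G_H)(e^G_H)$ has a nonzero mark at $H\leq H$ and is therefore nonzero, which immediately rules out $H<G$. With that in hand the proposition is just the translation ``$m_{G,N}=0\ \forall N\neq 1$'' $\iff$ ``$B(\mathrm{Def}^G_{G/N})(e^G_G)=0\ \forall N\neq 1$'' $\iff$ ``$e^G_G\in\underline{\underline{E_G}}$'', together with the automatic membership $e^G_G\in\underline{E_G}$ and the uniqueness of the witnessing idempotent.
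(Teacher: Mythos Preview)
Your proof is correct and follows essentially the same route as the paper's: both directions hinge on showing that the only idempotent $e^G_H$ that can lie in $\underline{E_G}$ is $e^G_G$ (because $B(\mathrm{Res}^G_H)(e^G_H)\neq 0$ --- the paper cites Theorem~5.2.4 of \cite{serge-biset}, which gives $B(\mathrm{Res}^G_K)(e^G_K)=e^K_K$, while you argue via marks), after which the equivalence reduces to the deflation formula $B(\mathrm{Def}^G_{G/N})(e^G_G)=m_{G,N}e^{G/N}_{G/N}$. The paper invokes Proposition~5.4.5 of \cite{serge-biset} for $e^G_G\in\underline{E_G}$ where you use the ghost map directly, but this is the same computation.
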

\begin{proof}
If  $G$  is  a $B$-group,  we have  $B(Def^G_{G/N}) (e^G_G)=0$, for $1\neq N \unlhd G$ i.e. $e^G_G \in \underline{\underline{E_G}}$. Also,  by  Proposition  5.4.5. of \cite{serge-biset},  $G$ is a minimal group for ideal generated by $e^G_G$,  then  $e^G_G \in \underline{E_G}$. Thus   $e^G_G \in \underline{\underline{E_G}} \cap \underline{E_G}$, i.e. $G$ is an $MC$-group of $B$ (Definition \ref{A-group}).\\
On the other hand, let $G$ be an $MC$-group. This means that there exist  $K\leq G$ such that $e^G_K\in \underline{E_G} \cap \underline{\underline{E_G}}$. If $K$ is a proper subgroup of $G$,  by Theorem 5.2.4 of $\cite{serge-biset}$,  one has 
\begin{align*}
B(Res^G_K)( e^G_K)=e^K_K,  
\end{align*}
but $e^G_K \in \underline{E_G}$, then $B(Res^G_K) (e^G_K)=0$.  This
contradiction shows that $K=G$. i.e.  $e^G_G\in  \underline{E_G} \cap \underline{\underline{E_G}}$. It follows that $ B(Def^G_{G/N}) (e^G_G)=0$ for any  $1\neq N \unlhd G$. Thus $G$ is a $B$-group.
\end{proof}
\begin{defi}[Definition  5.4.13 of \cite{serge-biset}]
We can define a relation  $\gg_B$ on $B$-$grp$, by $G \gg_B  H$ if and only if $H$ is isomorphic to a quotient of $G$.
\end{defi}
Now we show that the relation $\gg_B$  is equivalent to  the relation $\gg$.
\begin{lem}
Let  $\gg_B$ denote  the relation in  Definition 5.4.13 of \cite{serge-biset}. Let $K$ and $H$ be $B$-groups. Then,  $(K, e^K_K)\gg (H, e^H_H)$ if only if  $K\gg_B H$.
\end{lem}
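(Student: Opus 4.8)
The plan is to prove the equivalence $(K, e^K_K) \gg (H, e^H_H) \Leftrightarrow K \gg_B H$ by translating both sides into statements about the ideal generated by $e^K_K$ evaluated at $H$, using the dictionary already built in the paper. By Lemma \ref{lema menor} (and Definition \ref{gg}), we know $(K,e^K_K) \gg (H,e^H_H)$ is equivalent to $\textbf{e}_\textbf{H} \subseteq \textbf{e}_\textbf{K}$, which by Remark \ref{inter} is equivalent to $e^H_H \in \textbf{e}_\textbf{K}(H) = Hom_{\mathcal{P}_A}(K,H)e^K_K$. So the whole problem reduces to: describe $\textbf{e}_\textbf{K}(H)$ for the Burnside functor and show it contains $e^H_H$ precisely when $H$ is a quotient of $K$.

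First I would unfold $\textbf{e}_\textbf{K}(H)$ concretely. Since the ideal generated by an element is spanned by all biset operations applied to it (Lemma 2.3.26 of \cite{serge-biset}, or Remark \ref{inter}), and every biset decomposes into the five elementary operations, I would use the description of each elementary operation on the idempotents $e^G_H$ of $\mathbb{K}B$ available from Bouc's book: the behaviour of $Res$, $Ind$, $Inf$, $Def$, $Iso$ on the $e^G_H$ is given by Theorems 5.2.4, 5.4.11 and related results of \cite{serge-biset}. The key classical fact is that the ideal of $\mathbb{K}B$ generated by $e^K_K$ (for $K$ a $B$-group) has, at a group $G$, the span of those $e^G_L$ for which the "$B$-group quotient" of $L$ is $\gg_B K$, i.e. has $K$ as a quotient up to the $B$-group reduction; in particular $e^H_H$ (with $H$ a $B$-group, so it is its own $B$-group reduction) lies in $\textbf{e}_\textbf{K}(H)$ iff $H$ has $K$... wait, iff $K$ is a quotient of $H$? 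No — I must get the direction right. From Definition \ref{gg}, $(H,e_H) \gg (K,e_K)$ means $\textbf{e}_\textbf{H} \subseteq \textbf{e}_\textbf{K}$, i.e. the ideal generated "higher up" $H$ sits inside the one generated by $K$; combined with $\gg_B$ meaning "$H$ is isomorphic to a quotient of $G$" reading $G \gg_B H$, the statement to prove is that $K \gg_B H$ (i.e. $H$ is a quotient of $K$) corresponds to $\textbf{e}_\textbf{H} \subseteq \textbf{e}_\textbf{K}$. Deflation $Def^K_{K/N}$ applied to $e^K_K$ gives (up to a nonzero scalar $m_{K,N}$, which is nonzero for $B$-groups along the way, or after passing to the $B$-group quotient) the idempotent $e^{K/N}_{K/N}$, so any quotient $H = K/N$ has $e^H_H \in \textbf{e}_\textbf{K}(H)$, giving $\textbf{e}_\textbf{H} \subseteq \textbf{e}_\textbf{K}$; this is the forward direction $K \gg_B H \Rightarrow (K,e^K_K)\gg(H,e^H_H)$.

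For the converse, assume $(K, e^K_K) \gg (H, e^H_H)$, i.e. $e^H_H \in \textbf{e}_\textbf{K}(H)$. Expanding $\textbf{e}_\textbf{K}(H)$ as $\sum$ over bisets $U$ from $K$ to $H$ of $\mathbb{K}B(U)(e^K_K)$, and using that $\mathbb{K}B(U)(e^K_K)$ is always a $\mathbb{K}$-linear combination of idempotents $e^H_L$ with $L$ running over subgroups of $H$ that are subquotients of $K$ (this is exactly the content of the transitive-biset / Bouc's factorization combined with Theorem 5.2.4 — each transitive biset factors as $Ind\,Inf\,Iso\,Def\,Res$, and $Res$ of an idempotent is a sum of idempotents of subgroups, $Def$ kills or rescales, $Iso$ permutes, $Inf$ is a sum of idempotents, $Ind$ maps $e^L_L$ to a scalar times $e^H_L$), the coefficient of $e^H_H$ can only be nonzero if $H$ itself appears as such a subquotient in a way that survives all the reductions — which forces $H$ to be (isomorphic to) a quotient of $K$, i.e. $K \gg_B H$. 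The cleanest way to run this is: $e^H_H \in \textbf{e}_\textbf{K}(H)$ together with the fact that for a $B$-group $H$, the ideal $\textbf{e}_\textbf{H}$ satisfies $\textbf{e}_\textbf{H}(H) = \mathbb{K}e^H_H$ and $H$ is the minimal group of $\textbf{e}_\textbf{H}$ (Proposition 5.4.5 of \cite{serge-biset}); so $\textbf{e}_\textbf{H} \subseteq \textbf{e}_\textbf{K}$ forces the minimal group $H$ of $\textbf{e}_\textbf{H}$ to receive a nonzero image, and Bouc's classification of ideals of $\mathbb{K}B$ (Section 5.4 of \cite{serge-biset}, where the ideals are indexed by closed families of $B$-groups under $\gg_B$) identifies this with $K \gg_B H$. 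The main obstacle I anticipate is bookkeeping the precise statements from Bouc's Section 5.4 — in particular making sure the direction of $\gg_B$ in Definition 5.4.13 matches the inclusion direction in Definition \ref{gg}, and invoking the correct form of "the image of $e^K_K$ under an arbitrary biset is supported on idempotents of quotients of subgroups of $K$" — rather than any genuinely new argument, since the heavy lifting is already done in \cite{serge-biset}.
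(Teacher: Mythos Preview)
There is a genuine direction error that breaks both halves of your argument. From Definition \ref{gg} and Lemma \ref{lema menor} one has $(H,e_H)\gg(K,e_K)$ if and only if $\textbf{e}_\textbf{H}\subseteq\textbf{e}_\textbf{K}$, with the \emph{same} letters in the same positions. Hence $(K,e^K_K)\gg(H,e^H_H)$ is equivalent to $\textbf{e}_\textbf{K}\subseteq\textbf{e}_\textbf{H}$, not to $\textbf{e}_\textbf{H}\subseteq\textbf{e}_\textbf{K}$ as you wrote. Once this is reversed, the forward direction via deflation collapses: since $K$ is a $B$-group, by definition $m_{K,N}=0$ for every nontrivial $N\unlhd K$, so $Def^K_{K/N}(e^K_K)=0$ and you never reach $e^H_H$ this way. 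The parenthetical ``which is nonzero for $B$-groups along the way'' is precisely backwards. The correct elementary argument for $K\gg_B H\Rightarrow\textbf{e}_\textbf{K}\subseteq\textbf{e}_\textbf{H}$ goes through \emph{inflation}: if $H\cong K/N$, then $e^K_K$ appears as a summand of $Inf^K_{K/N}(e^{K/N}_{K/N})$, so $e^K_K\in\textbf{e}_\textbf{H}(K)$.

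The paper's own proof is a two-line citation: Proposition 5.4.8 of \cite{serge-biset} already states that for $B$-groups $K$ and $H$ one has $K\gg_B H$ if and only if $\textbf{e}_\textbf{K}\subseteq\textbf{e}_\textbf{H}$, and Lemma \ref{lema menor} translates this ideal inclusion into $(K,e^K_K)\gg(H,e^H_H)$. Your plan to re-derive 5.4.8 from the elementary biset operations on idempotents is reasonable in spirit, and your converse sketch (support of biset-images of $e^K_K$ on idempotents indexed by subquotients of $K$) is the right idea once the directions are fixed; but as written the reversed inclusion and the false claim $m_{K,N}\neq 0$ make the proposal incorrect.
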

\begin{proof}
By Proposition 5.4.8 in \cite{serge-biset}, $K \gg_B H$ if and only if $\textbf{e}_\textbf{K} \subseteq\textbf{ e}_\textbf{H}.$ But, by
Lemma \ref{lema menor} this is equivalent to $(K, e_K^K) \gg (H, e_H^H).$
\end{proof}
 Since the relations  $\gg_B$ and $\gg$ are equivalent,  then  Theorem 5.4.14 of \cite{serge-biset} is equivalent to the Theorem \ref{thm clasific}.

 \subsection{Fibered $p$-biset Functor}
The second example  we will see is the fibered Burnside functor, this functor was  studied in \cite{fibered} by  Robert Boltje and Olcay Co{\c{s}}kun.  We defined this  functor in subsection \ref{sect fibered functor}. The precise situation is as follows.  Let $\mathbb{K}$ a be a sufficiently large field of characteristic $q$, and  we fix a prime $p\neq q$ and a positive integer $n$. For an abelian group $A$,  we denote by  and  $O(G) = O_A(G)$ the intersection of kernels of all homomorphism $G\longrightarrow A$ and $\xi_G(A)$  is the set of all pairs $(H, h)$ where $H \leq G$ and $h$ runs
over a complete set of left coset representatives of $O_A(H $) in $H$. The set $\xi_G(A)$ is a $G$-set
via conjugation and we denote by $[H,h]_G$ the  conjugacy class of $(H,h)$.  Let $\mathcal{M}_G(A)=\lbrace (V,\lambda ) \mid V\leq G, \ \lambda \in Hom_{grp} (V, A) \rbrace$, then by Lemma 3.1 of \cite{barker}  one  has  $|\mathcal{M}_G(A)|=|\xi_G(A)|$ and  by Theorem 5.2 of \cite{barker} there exists   a bijective correspondence between a set of conjugacy classes of $\xi_G(A)$ and the set of primitive idempotents of $\mathbb{K}B^A (G)$,  where the class   $[H,h]_G$  corresponds to the idempotent
\begin{align*}
e^G_{H,h}= \frac{1}{|N_G(H,h)|} \sum_{(V, \nu) \in \mathcal{M}_A(G)/G} |V| \mu_G \left( V, \nu; H,h\right) [V,\nu]_G,
\end{align*}
where $N_G (H, h)$ denotes the stabilizer in $G$ of the pair $(H, h)$ under the above action, 
\begin{align*}
\mu_G \left( V, \nu; H,h\right) =\sum_{(V^\prime,\nu^\prime) \in [V,\nu]_G} \nu^{-1}\left( V\cap hO_A(H)\right) \mu(V^\prime, H)/|V^\prime|
\end{align*}
is the monomial Möbius function and the above sum is over all pairs $(V^\prime,\nu^\prime)$ $G$-conjugate
to $(V , \nu)$.

\begin{prop}[Proposition 7 of \cite{cocskun}] \label{res-ind-fiber}
Let $N \unlhd G$ be finite groups.  Then  for $(H, h) \in \xi_G(A)$, we have
\begin{align*}
Def^G_{G/N} e^G_{H,h} =m_{H,h}^G \cdot e^{G/N}_{HN/N,hN}
\end{align*}
for some constant $m^G_{H,h} \in \mathbb{K}$.

\end{prop}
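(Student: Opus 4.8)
The plan is to deduce the statement from Theorem \ref{teodeoper}(d) together with an explicit description of inflation on the primitive idempotents of $\mathbb{K}B^A$. Since $\mathbb{K}B^A$ satisfies the standing hypotheses of Section \ref{biset and ideals} (each $\mathbb{K}B^A(G)$ is a finite dimensional split semisimple commutative $\mathbb{K}$-algebra, its primitive idempotents being Barker's $e^G_{V,v}$), Theorem \ref{teodeoper}(d) already gives $Def^G_{G/N}(e^G_{H,h}) = m^G_{H,h}\,t$ for some $m^G_{H,h}\in\mathbb{K}$ and some $t\in E_{G/N}$; the content of the proposition is the identification $t = e^{G/N}_{HN/N,hN}$.

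First I would locate $t$ intrinsically, by a Frobenius and orthogonality argument valid for any Green biset functor with the present hypotheses. For $s\in E_{G/N}$, the Frobenius identity of Definition \ref{green} gives
\begin{align*}
s\cdot Def^G_{G/N}(e^G_{H,h}) = Def^G_{G/N}\!\left(Inf^G_{G/N}(s)\cdot e^G_{H,h}\right).
\end{align*}
As $Inf^G_{G/N}$ is a unital ring homomorphism, the idempotents $Inf^G_{G/N}(s)$, $s\in E_{G/N}$, are pairwise orthogonal with sum $1$, and each is a sum of elements of $E_G$ by Theorem \ref{teodeoper}(c); hence $Inf^G_{G/N}(s)\cdot e^G_{H,h}$ equals $e^G_{H,h}$ if $e^G_{H,h}$ is a constituent of $Inf^G_{G/N}(s)$ and $0$ otherwise, and there is exactly one such $s$, say $s_0$. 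Summing over $s$ and using $\sum_s s = 1$ gives $Def^G_{G/N}(e^G_{H,h}) = s_0\cdot Def^G_{G/N}(e^G_{H,h})$, which lies in $\mathbb{K}s_0$ because $\mathbb{K}B^A(G/N)$ is split semisimple commutative; thus $t = s_0$. So the proposition is equivalent to the assertion that $e^G_{H,h}$ is a constituent of $Inf^G_{G/N}\!\left(e^{G/N}_{HN/N,hN}\right)$.

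To settle that, I would pass to the mark (species) homomorphisms. Let $s^G_{V,v}$, for $(V,v)\in\xi_G(A)$, denote the mark homomorphisms of $\mathbb{K}B^A(G)$ — the algebra homomorphisms $\mathbb{K}B^A(G)\to\mathbb{K}$ dual to the idempotents $e^G_{V,v}$, given on fibered $G$-sets by counting fibered fixed points as in \cite{barker}. Then $s_0 = e^{G/N}_{W,w}$ if and only if the algebra homomorphism $s^G_{H,h}\circ Inf^G_{G/N}$ equals $s^{G/N}_{W,w}$ (the composite is unital, hence it is one of the mark homomorphisms of $G/N$). Now unwind the fixed-point description: for an $A$-fibered $G/N$-set $X$, the inflation $Inf^G_{G/N}X$ is $X$ regarded as an $A$-fibered $G$-set through $\pi\colon G\twoheadrightarrow G/N$, so $H$ acts on it only through its image $HN/N$, and the twist recorded by the $O_A(H)$-coset of $h$ is recorded by the $O_A(HN/N)$-coset of $hN$; consequently the $(H,h)$-mark of $Inf^G_{G/N}X$ coincides with the $(HN/N,hN)$-mark of $X$. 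This yields $s^G_{H,h}\circ Inf^G_{G/N} = s^{G/N}_{HN/N,hN}$, hence $s_0 = e^{G/N}_{HN/N,hN}$, as required. Alternatively one can stay inside $\mathbb{K}B^A$ and expand $Inf^G_{G/N}\!\left(e^{G/N}_{W,w}\right)$ in the standard basis $[V,\nu]_G$ using Barker's Möbius formula and the formula for inflation on basis elements, checking that the coefficient of $e^G_{H,h}$ is nonzero exactly when $(W,w)$ is $G/N$-conjugate to $(HN/N,hN)$.

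The formal part — reducing to the inflation statement via Frobenius, orthogonality and split semisimplicity — is routine. The real work, and the only genuinely fibered input, is the mark computation $s^G_{H,h}\circ Inf^G_{G/N} = s^{G/N}_{HN/N,hN}$, where one must handle carefully the normalisation of the group element $h$ modulo $O_A(H)$ as it descends to $hN$ modulo $O_A(HN/N)$; I expect that to be the main obstacle.
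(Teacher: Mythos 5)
The paper does not prove this statement at all: it is imported verbatim as Proposition 7 of \cite{cocskun}, so there is no internal argument to compare yours against. Judged on its own, your proposal is correct and is a sensible way to reprove the result. The reduction is sound: Theorem \ref{teodeoper}(d) gives $Def^G_{G/N}(e^G_{H,h})=\lambda t$ with $t\in E_{G/N}$; the Frobenius identity $s\cdot Def^G_{G/N}(a)=Def^G_{G/N}(Inf^G_{G/N}(s)\cdot a)$ together with the fact that the orthogonal idempotents $Inf^G_{G/N}(s)$, $s\in E_{G/N}$, sum to $1$ and hence partition $E_G$ into constituent sets, does pin down $t$ as the unique $s_0$ having $e^G_{H,h}$ as a constituent of its inflation (and the degenerate case $\lambda=0$ is covered by $m^G_{H,h}=0$). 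The remaining content is exactly where you place it: showing $s^G_{H,h}\circ Inf^G_{G/N}=s^{G/N}_{HN/N,hN}$ on mark homomorphisms. Your fixed-point argument for this is right — for an inflated $A$-fibered set the $H$-stable $A$-orbits are the $HN/N$-stable ones, and the twisting homomorphism $\phi_x\colon H\to A$ factors through $HN/N$, so its value at $h$ depends only on $hN$ modulo $O_A(HN/N)$ — together with the observation that a unital algebra homomorphism from a split semisimple commutative algebra to $\mathbb{K}$ must be one of the marks. If you wanted to make this fully rigorous you would still need to quote or verify Barker's description of the species of $\mathbb{K}B^A(G)$ and their duality with the idempotents $e^G_{V,v}$, but that is exactly the standard input one expects, and nothing in your outline would fail.
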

 We denote by $\mu_n$ a cyclic group of order $p^n$.  For a $p$-group $P$, denote  by $ O_n(P )$ the group $O_{\mu_n} (P )$ and the functor  $\mathbb{K}B^{\mu_n}$ by $\mathbb{K}B^n$. In this case, the group $ G^\ast$ is isomorphic to the dual group $Hom(G/O(G), \mu_n)$.
Let $\Phi(G)$ denote the Frattini subgroup of $G$.

\begin{prop}[Lemma 2 of \cite{cocskun}]\label{def-frattini-fiber}
For any $p$-group $G$ and $g\in  G$, we have
\begin{align*}
Def^G_{G/\Phi(G)} e^G_{G,g}=\frac{|O_n(G)|}{|N_G(G,g)|} \cdot |G/\Phi(G)|\cdot e^{G/\Phi(G)}_{G/\Phi(G),g\Phi(G)}.
\end{align*}
\label{m H g}
Let $G$ be an elementary abelian $p$-group of order $p^r$,  and $h$ be a non-trivial
element of $G$. Let  $H =\langle h \rangle$ be the subgroup generated by $h$,  we have
\[m_{H,g} = \left\{
  \begin{array}{lr}
    \frac{1-p^{r-1}}{p} &  \text{if }  g=1,\\
    \frac{1}{p} &  g \neq 1, \ g\in H,\\
     \frac{1-p^{r-2}}{p} &  g\notin H .
  \end{array}
\right.
\]
\end{prop}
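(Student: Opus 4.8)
Throughout, $G$ denotes the elementary abelian group of order $p^{r}$, so $H=\langle h\rangle\cong C_{p}$ and $G/H\cong C_{p}^{\,r-1}$, and $m_{H,g}$ is the scalar of Proposition~\ref{res-ind-fiber} for the deflation of $e^{G}_{G,g}$ by $H$, i.e. $Def^{G}_{G/H}(e^{G}_{G,g})=m_{H,g}\,e^{G/H}_{G/H,\,gH}$. Since Proposition~\ref{res-ind-fiber} already guarantees that the left-hand side is a scalar multiple of a single primitive idempotent, the task is just to identify the scalar, and the plan is a direct computation with the explicit idempotent formula recalled just before Proposition~\ref{res-ind-fiber}, exploiting the simplifications special to the elementary abelian case: $G$ is abelian, so every $G$-orbit of monomial pairs is a singleton and $N_{G}(V,\nu)=G$; each subgroup $V\le G$ has exponent $p$, hence $O_{n}(V)=1$; and the subgroup-lattice Möbius function is $\mu(V,G)=(-1)^{k}p^{\binom k2}$ for $V$ of corank $k$, so $\mu(G,G)=1$ and $\mu(V,G)=-1$ for a hyperplane. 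With $O_{n}(V)=1$ the monomial Möbius function degenerates to $\mu_{G}(V,\nu;G,g)=\nu(g)^{-1}\mu(V,G)/|V|$ for $g\in V$ and $0$ otherwise, so in $e^{G}_{G,g}=\frac1{|G|}\sum_{(V,\nu)}|V|\,\mu_{G}(V,\nu;G,g)\,[V,\nu]_{G}$ the coefficient of $[V,\nu]_{G}$ is $\nu(g)^{-1}\mu(V,G)/|G|$ when $g\in V$ and is $0$ otherwise.

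First I would apply $Def^{G}_{G/H}$ termwise on the monomial basis, using that $Def^{G}_{G/H}[V,\nu]_{G}=[VH/H,\bar\nu]_{G/H}$ when $V\cap H\le\ker\nu$ (so $\nu$ descends) and $=0$ otherwise, and collect the result by its image basis element. Since this expansion of $Def^{G}_{G/H}(e^{G}_{G,g})$ equals $m_{H,g}\,e^{G/H}_{G/H,gH}$, the scalar is obtained by dividing the coefficient of a convenient basis element by its coefficient in the (equally explicit) expansion of $e^{G/H}_{G/H,gH}$ — for instance the coefficient of $[G/H,\kappa]_{G/H}$, where $\kappa$ is the character of $G/H$ attached to $gH$ under Barker's bijection and its coefficient in $e^{G/H}_{G/H,gH}$ is the explicit number $\kappa(gH)^{-1}/|G/H|$; equivalently one evaluates both sides under the species of $\mathbb{K}B^{n}(G/H)$ indexed by $(G/H,gH)$, which returns $m_{H,g}$ on the right. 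The three cases then come from counting which pairs $(V,\nu)$ survive: the image involves $(G/H,\cdot)$ only when $VH=G$, which forces $V=G$ or a hyperplane of $G$ not containing $H$. For $g=1$ the condition $g\in V$ is vacuous, so all $p^{\,r-1}$ such hyperplanes (the count $\tfrac{p^{r}-1}{p-1}-\tfrac{p^{r-1}-1}{p-1}=p^{\,r-1}$), each contributing with Möbius sign $-1$, enter, and the sum assembles to $\tfrac{1-p^{r-1}}{p}$. For $g\in H\setminus\{1\}$ the requirement $g\in V$ forces $H=\langle g\rangle\le V$, hence $V=G$, and the single surviving term gives $\tfrac1p$. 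For $g\notin H$ the contributing hyperplanes are those through $g$ but not through $H$, of which there are $\tfrac{p^{r-1}-1}{p-1}-\tfrac{p^{r-2}-1}{p-1}=p^{\,r-2}$ (hyperplanes through $g$, minus those through the rank-two subgroup $\langle g,h\rangle$), producing $\tfrac{1-p^{r-2}}{p}$.

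The delicate step is not the subgroup count but the bookkeeping of the monomial characters: for each admissible $V$ one must enumerate the $\nu\in\mathrm{Hom}(V,\mu_{n})$ with $V\cap H\le\ker\nu$ whose quotient character $\bar\nu$ is the prescribed one, sum $\nu(g)^{-1}$ over them, and keep the normalizing factors $|N_{G}(\,\cdot\,)|$ and $1/|V|$ straight — this is where one must be explicit about Barker's bijection, which is what fixes the character attached to $gH$. One also has to treat the boundary cases honestly: for $r=1$ one has $G=H$, $G/H$ trivial and the ``$g\notin H$'' case empty, while the exponents $r-1$ and $r-2$ presuppose $r\ge 1$ and $r\ge 2$; as a check, for $r=1$ the procedure gives $0$ and $\tfrac1p$, and for $r=2$ it gives $\tfrac{1-p}{p}$, $\tfrac1p$ and $0$, matching the stated formula. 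Everything else — the reduction to a scalar via Proposition~\ref{res-ind-fiber}, the Möbius values, and the termwise deflation of the monomial basis — is routine.
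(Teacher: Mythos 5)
The paper offers no proof of this proposition --- it is imported wholesale as Lemma 2 of the cited paper of Co\c{s}kun --- so your attempt has to be judged on its own terms, and judged that way there is one concrete gap: you prove only half of the statement. The proposition makes two assertions. The first is the identity $Def^G_{G/\Phi(G)}e^G_{G,g}=\frac{|O_n(G)|}{|N_G(G,g)|}\cdot|G/\Phi(G)|\cdot e^{G/\Phi(G)}_{G/\Phi(G),g\Phi(G)}$ for an \emph{arbitrary} $p$-group $G$; the second is the table of values of $m_{H,g}$ when $G$ is elementary abelian and $H=\langle h\rangle$. Your write-up opens with ``Throughout, $G$ denotes the elementary abelian group of order $p^r$'' and never returns to the first display. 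That first formula is not a special case of your computation (for elementary abelian $G$ one has $\Phi(G)=1$ and it is vacuous), and it genuinely needs the extra machinery you get to discard later: nonabelian $G$ forces you to keep the sum over conjugacy classes and the normalizers $N_G(V,\nu)$, and $O_n(G)$ need not be trivial, nor need it contain $\Phi(G)$ (e.g.\ $G=C_{p^2}$ with $n\ge 2$), so the survival condition $\nu(V\cap\Phi(G))=1$ does not trivialize. Either supply that computation or state explicitly that you are proving only the second display and citing the first.

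For the second display your argument is essentially correct, and the parts you flag as ``delicate'' are in fact immediate in this setting, so they should simply be carried out rather than deferred. Concretely: since $G$ is elementary abelian, $O_n(V)=1$ for all $V\le G$, orbits are singletons, and the coefficient of $[V,\nu]_G$ in $e^G_{G,g}$ is $\nu(g)^{-1}\mu(V,G)/|G|$ for $g\in V$ and $0$ otherwise, as you say. Applying the rule $Def^G_{G/H}[V,\nu]_G=[VH/H,\bar\nu]_{G/H}$ if $\nu(V\cap H)=1$ and $0$ otherwise, the pairs contributing to the coefficient of a fixed $[G/H,\kappa]_{G/H}$ are $(G,\nu)$ with $\nu|_H=1$ and $\bar\nu=\kappa$ (one such $\nu$), and $(V,\nu)$ with $V$ a hyperplane not containing $H$, $g\in V$, and $\bar\nu=\kappa$; here $V\cap H=1$, so the lift $\nu$ of $\kappa$ along $V\xrightarrow{\sim}G/H$ is unique and satisfies $\nu(g)=\kappa(gH)$, which disposes of the character bookkeeping you worried about. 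Your hyperplane counts ($p^{r-1}$ for $g=1$, none for $1\ne g\in H$, and $p^{r-2}$ for $g\notin H$) are right, giving coefficients $\kappa(gH)^{-1}(1-p^{r-1})/p^{r}$, $\kappa(gH)^{-1}/p^{r}$, $\kappa(gH)^{-1}(1-p^{r-2})/p^{r}$ respectively; dividing by the coefficient $\kappa(gH)^{-1}/p^{r-1}$ of $[G/H,\kappa]_{G/H}$ in $e^{G/H}_{G/H,gH}$ yields exactly the three stated values, consistent with how the paper later uses them in the proof that the $MC$-groups of $\mathbb{K}B^{n}$ are the elementary abelian groups of order $p^{r}$ with $p^{r-1}\equiv 1 \pmod q$. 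Your boundary checks for $r=1,2$ are also correct.
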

 Let $\mathcal{I}=\lbrace 0 \rbrace \cup \lbrace r\in \mathbb{N} \mid p^{r-1}\equiv 1 (mod q ) \rbrace$.
\begin{thm} [Theorem 12 of \cite{cocskun}]
 Let $F$ be an ideal of $\mathbb{K}B^n$ and $G$ be a minimal group of $F$. Then,
 \begin{enumerate}[(i)]
 \item the group $G$ is elementary abelian of order $p^r$, for some $r \in\mathcal{I}$,
 \item the $\mathbb{K}$-vector space $F (G)$ is $1$-dimensional generated by $e^G_{G,1}$,
 \item  the ideal F is generated by $e^G_{G,1}$.
 \end{enumerate}

\end{thm}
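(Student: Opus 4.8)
The plan is to combine the general machinery developed in Sections \ref{biset and ideals} and \ref{ideals and idem} with the explicit idempotent formulas recalled above for $\mathbb{K}B^n$. By Lemma \ref{Ideal-Agrupo} and the proof strategy of Lemma \ref{ideal=suma}, any ideal $F$ is a sum of ideals of the form $\textbf{e}_\textbf{H}$ where $H$ is an $MC$-group of $\mathbb{K}B^n$ and the generating idempotent lies in $\underline{E_H}\cap\underline{\underline{E_H}}$. The minimal group $G$ for $F$ carries an idempotent $e^G_{H,h}\in F(G)\cap\underline{E_G}\cap\underline{\underline{E_G}}$, so the crux is to identify which pairs $(G,e^G_{H,h})$ actually satisfy $e^G_{H,h}\in\underline{E_G}\cap\underline{\underline{E_G}}$, i.e.\ to determine the $MC$-groups of $\mathbb{K}B^n$ together with their admissible idempotents.

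First I would show $H=G$: since $\mathbb{K}B^n(\mathrm{Res}^G_H)e^G_{H,h}$ is, by the standard restriction formula for fibered Burnside idempotents (the fibered analogue of Theorem 5.2.4 of \cite{serge-biset}), a nonzero idempotent of $\mathbb{K}B^n(H)$ when $H\le G$, the condition $e^G_{H,h}\in\underline{E_G}$ forces $H=G$. Thus $F(G)$ is spanned by idempotents of the shape $e^G_{G,g}$. Next I would establish that $G$ is a $p$-group: if $G$ had a normal subgroup $N$ with $G/N$ a nontrivial $p$-group, or more to the point if $G$ itself has a nontrivial quotient on which deflation of $e^G_{G,g}$ is nonzero, the maximality in Lemma \ref{lemadefla} together with Proposition \ref{res-ind-fiber} ($\mathbb{K}B^n(\mathrm{Def}^G_{G/N})e^G_{G,g}=m^G_{G,g}e^{G/N}_{G/N,gN}$) would contradict minimality of $G$ unless all such $m^G_{G,g}$ vanish. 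The vanishing of all deflation constants $m^G_{G,N}$ for $1\neq N\unlhd G$, combined with the fibered analogue of the fact that a $B$-group with this property and $|G|\neq 0$ in $\mathbb{K}$ must be elementary abelian (using Proposition \ref{def-frattini-fiber} applied to $N=\Phi(G)$: the deflation to $G/\Phi(G)$ has coefficient a nonzero scalar times $|G/\Phi(G)|$, which is nonzero in characteristic $q\neq p$, so $\Phi(G)=1$ and $G$ is elementary abelian), shows $G\cong(\mathbb{Z}/p)^r$.

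Having reduced to $G$ elementary abelian of order $p^r$, I would then pin down which idempotent generates $F(G)$ and which $r$ occur. Using the explicit constants in Proposition \ref{m H g}: for $G=(\mathbb{Z}/p)^r$ and a nontrivial $h\in G$ with $H=\langle h\rangle$, the deflation $\mathbb{K}B^n(\mathrm{Def}^G_{G/H})e^G_{H,h}$ has coefficient $m_{H,h}=1/p\neq 0$, so $e^G_{H,h}\notin\underline{\underline{E_G}}$ once $H\neq G$; hence the admissible idempotent in $\underline{E_G}\cap\underline{\underline{E_G}}$ must be $e^G_{G,1}$ (the only one supported on the full group with trivial monomial label — any $e^G_{G,g}$ with $g\neq 1$ still has $H=G$, but one checks via the deflation to $G/\langle g\rangle$ that its deflation constant is a unit, ruling it out; and $e^G_{G,1}\in\underline{\underline{E_G}}$ precisely forces all the relevant $m_{G,N}$ to vanish). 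This gives (ii), that $F(G)=\mathbb{K}e^G_{G,1}$. The constraint $r\in\mathcal{I}=\{0\}\cup\{r:p^{r-1}\equiv 1\ (\mathrm{mod}\ q)\}$ then emerges as exactly the condition that $m_{G,N}=\frac{1-p^{r-1}}{p}=0$ in $\mathbb{K}$ for the relevant subgroups (the $g=1$ case of Proposition \ref{m H g} applied with the roles of $G$ and its subgroups), i.e.\ that $e^G_{G,1}$ really lies in $\underline{\underline{E_G}}$ over the characteristic-$q$ field. Finally (iii) is immediate from Lemma \ref{res-ideal} and Lemma \ref{defl-ideal} (or directly from Lemma \ref{ideal=suma}): since $G$ is the minimal group and $F(G)=\mathbb{K}e^G_{G,1}$, every generating idempotent of $F$ reduces, up to the equivalence $\sim$, to $(G,e^G_{G,1})$, so $F=\textbf{e}_\textbf{G}$ with $e^G_{G,1}$ the generator.

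The main obstacle I expect is the bookkeeping around the fibered restriction formula and the precise identification of $e^G_{G,1}$ as the unique idempotent in $\underline{E_G}\cap\underline{\underline{E_G}}$: one must verify that deflation is nonzero on every $e^G_{G,g}$ with $g\ne 1$ and on every $e^G_{H,h}$ with $H\lneq G$, and that the only obstruction to $e^G_{G,1}\in\underline{\underline{E_G}}$ is the arithmetic condition cut out by $\mathcal{I}$; this is where Propositions \ref{res-ind-fiber}, \ref{def-frattini-fiber} and \ref{m H g} do the real work, and care is needed because the scalars live in a field of positive characteristic $q$, so "nonzero rational number" must be replaced throughout by "nonzero in $\mathbb{K}$".
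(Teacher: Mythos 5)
First, a point of comparison: the paper does not prove this statement at all — it is quoted as Theorem 12 of \cite{cocskun} — so there is no internal proof to match against. Your reconstruction of parts (i) and (ii) is essentially sound and closely parallels the Proposition the paper proves immediately afterwards (the characterization of the $MC$-groups of $\mathbb{K}B^n$): the unnamed lemma at the end of Section \ref{biset and ideals} places every primitive idempotent of $F(G)$ in $\underline{E_G}\cap\underline{\underline{E_G}}$; the fibered restriction formula forces $H=G$; Proposition \ref{def-frattini-fiber} forces $\Phi(G)=1$, hence $G$ elementary abelian; and the constants of Proposition \ref{m H g} force $g=1$ and $p^{r-1}\equiv 1\pmod q$, i.e. $r\in\mathcal{I}$. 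Your closing caveat is also the right one: the blanket hypothesis of Sections \ref{biset and ideals}--\ref{ideals and idem} is characteristic $0$, whereas here $\mathrm{char}\,\mathbb{K}=q$, but the arguments only use that each evaluation is split semisimple, so they transfer.

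Part (iii), however, has a genuine gap. Lemma \ref{ideal=suma} gives $F=\sum_{(H,e_H)\in\mathcal{A}_F}\mathbf{e}_\mathbf{H}$, and by (i)--(ii) every such pair is $(H,e^H_{H,1})$ with $H$ elementary abelian of order $p^{r_H}$, $r_H\in\mathcal{I}$, $|H|\ge|G|$. But these pairs are \emph{not} $\sim$-equivalent to $(G,e^G_{G,1})$ — in general they generate strictly smaller ideals — so your claim that "every generating idempotent of $F$ reduces, up to the equivalence $\sim$, to $(G,e^G_{G,1})$" is false as stated. What (iii) actually requires is the containment $\mathbf{e}_\mathbf{H}\subseteq\mathbf{e}_\mathbf{G}$ whenever $|H|\ge|G|$, i.e. $(H,e^H_{H,1})\gg(G,e^G_{G,1})$ in the sense of Definition \ref{gg}; this is the content of Corollary 13 of \cite{cocskun} and follows neither from the minimality of $G$ nor from Lemmas \ref{res-ideal} and \ref{defl-ideal}. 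Note that the obvious Frobenius attempt is uninformative here: applying $Def^H_{H/N}$ to $e^H_{H,1}\cdot Inf^H_{H/N}(e^{H/N}_{H/N,1})$ yields $Def^H_{H/N}(e^H_{H,1})\cdot e^{H/N}_{H/N,1}=0$ precisely because $e^H_{H,1}\in\underline{\underline{E_H}}$, so one must instead verify directly (for instance via the species/mark description of $\mathbb{K}B^n(H)$) that $e^H_{H,1}$ occurs in the idempotent $Inf^H_{H/N}(e^{H/N}_{H/N,1})$ when $H/N\cong G$. Supplying that computation, or citing Corollary 13 of \cite{cocskun}, closes the argument.
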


\begin{prop}
Let  $G$ be a $p$-group of order  $p^r$. The group $G$ is an  $MC$-group of $ \mathbb{K}B^n$ if only if  it is elementary abelian  and  $p^{r-1} = 1 $ (mod $ q$).
\end{prop}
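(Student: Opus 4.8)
The plan is to characterize $MC$-groups among $p$-groups $G$ of order $p^r$ by combining the structural results on deflation of the fibered Burnside functor (Propositions \ref{res-ind-fiber} and \ref{def-frattini-fiber}) with the definition of $MC$-group (Definition \ref{A-group}) and the known description of the minimal groups of ideals (Theorem 12 of \cite{cocskun}). Recall that $G$ is an $MC$-group of $\mathbb{K}B^n$ precisely when $\underline{E_G}\cap\underline{\underline{E_G}}\neq\emptyset$, i.e.\ there is a primitive idempotent $e^G_{H,h}$ that vanishes under every proper restriction and under every nontrivial deflation. The key observation is that for a $p$-group, $\mathbb{K}B^n(Res^G_K)e^G_{H,h}=0$ for all $K<G$ forces $H=G$ (an idempotent labelled by a proper subgroup survives restriction to that subgroup), so the only candidate is $e^G_{G,g}$ for some $g\in G$; and $\mathbb{K}B^n(Def^G_{G/N})e^G_{G,g}$ is, by Proposition \ref{res-ind-fiber}, a scalar multiple of $e^{G/N}_{GN/N,gN}=e^{G/N}_{G/N,gN}$, so the condition $e^G_{G,g}\in\underline{\underline{E_G}}$ amounts to $m^G_{G,g}=0$ for every nontrivial $N\unlhd G$.

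First I would show that if $G$ is an $MC$-group then $G$ is elementary abelian. Apply the condition to $N=\Phi(G)$: by Proposition \ref{def-frattini-fiber} the constant in $Def^G_{G/\Phi(G)}e^G_{G,g}$ equals $\frac{|O_n(G)|}{|N_G(G,g)|}\,|G/\Phi(G)|$, which is a nonzero element of $\mathbb{K}$ (its numerator is prime to $q$ since $p\neq q$ and $|G/\Phi(G)|$ is a power of $p$). Hence $e^G_{G,g}\notin\underline{\underline{E_G}}$ unless $\Phi(G)=1$, i.e.\ unless $G$ is elementary abelian. Conversely, once $G$ is elementary abelian of order $p^r$, the only nontrivial deflations to worry about are already captured by the computation of $m_{H,g}$ in Proposition \ref{def-frattini-fiber} (with $H=\langle h\rangle$); one must identify which pair $(G,g)$ and which order $p^r$ make all these constants vanish. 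Taking $g=1$ and $N$ of index $p$, the relevant constant is $\frac{1-p^{r-1}}{p}$, and this vanishes in $\mathbb{K}$ iff $p^{r-1}\equiv 1\pmod q$, i.e.\ iff $r\in\mathcal I$. One then checks that for such $G$ all other nontrivial deflations of $e^G_{G,1}$ also vanish — deflating further only multiplies by additional such constants or lands on a smaller elementary abelian group whose corresponding constant is again of the form $\frac{1-p^{s-1}}{p}$ with $s\le r$, and these vanish once we know $e^G_{G,1}$ is the generator of the minimal-group ideal, by Theorem 12 of \cite{cocskun}.

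The cleanest route for the converse is in fact to invoke Theorem 12 of \cite{cocskun} directly: if $G$ is elementary abelian of order $p^r$ with $p^{r-1}\equiv 1\pmod q$ (so $r\in\mathcal I$, $r\neq 0$ being harmless here since $r=1$ gives $p^0=1$), then $G$ is the minimal group of the ideal it generates, hence by the last lemma of Section \ref{biset and ideals} (minimal groups of ideals are $MC$-groups) $G$ is an $MC$-group, with $e^G_{G,1}\in\underline{E_G}\cap\underline{\underline{E_G}}$. This sidesteps re-deriving the vanishing of all iterated deflation constants. For the forward direction the argument of the previous paragraph, using $\Phi(G)$ to force elementary abelian and then index-$p$ deflation to force $p^{r-1}\equiv 1\pmod q$, is self-contained.

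The main obstacle I anticipate is the forward direction's bookkeeping: one must be careful that $\underline{E_G}\cap\underline{\underline{E_G}}\neq\emptyset$ is witnessed by some $e^G_{G,g}$ and rule out $g\neq 1$, since a priori the condition might hold for a nontrivial $g$ on some group that is not forced to be elementary abelian by the $g=1$ computation. Here one uses that $\underline{E_G}$ already forces the idempotent to be of the form $e^G_{G,g}$ (restriction to proper subgroups kills idempotents labelled by proper subgroups), and then the $\Phi(G)$-deflation computation in Proposition \ref{def-frattini-fiber} applies uniformly to all $g$, with a nonzero constant, so $G$ elementary abelian follows regardless of $g$; finally for elementary abelian $G$ one reads off from the table of $m_{H,g}$ that the surviving idempotent must be $e^G_{G,1}$ and that this forces $p^{r-1}\equiv 1\pmod q$.
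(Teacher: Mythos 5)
Your proof is correct and follows essentially the same route as the paper: force $H=G$ via the vanishing of proper restrictions, use the nonvanishing Frattini-deflation constant to force $\Phi(G)=1$, and then read off from the table of $m_{H,g}$ that $g=1$ and $p^{r-1}\equiv 1\pmod q$; the converse is likewise a direct check against Propositions \ref{res-ind-fiber} and \ref{def-frattini-fiber}. One small caution: your "cleanest route" for the converse via Theorem 12 of \cite{cocskun} is not quite a direct citation (that theorem describes minimal groups of ideals, it does not by itself assert that such a $G$ is minimal for the ideal it generates), but your self-contained argument checking that all deflation constants vanish is exactly what the paper does, so nothing is missing.
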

\begin{proof}
First, suppose $G$ is an $MC$-group of $ \mathbb{K}B^n$, then  there exists  an idempotent $e_{H,h}^G\in\underline{E_G} \cap  \underline{\underline{E_G}}$. If $H\lneq G$, by Theorem \ref{teodeoper}, one has 
\begin{align*}
e^H_{H,h}\cdot Res^G_H e^G_{H,h} \neq 0.
\end{align*}
This is a contradiction, then $H=G$  i.e. $ e_{H,h}^G=e_{G,g}^G$.   Now, we will consider the Frattini  group, $\Phi(G)\unlhd G$.  By Lemma \ref{def-frattini-fiber}
\begin{align*}
Def^G_{G/\Phi(G)} e^G_{G,g}=\frac{|O_n(G)|}{|N_G(G,g)|} \cdot |G/\Phi(G)|\cdot e^{G/\Phi(G)}_{G/\Phi(G),g\Phi(G)} \neq 0,
\end{align*}
then $\Phi(G)= \lbrace 1 \rbrace$, thus $G$ is an elementary group. If $g\neq 1$, one has 
\begin{align*}
Def_{G/\langle g \rangle}^G  e^G_{G,g}=  \frac{1}{p} e^{G/\langle g \rangle}_{G/\langle g \rangle,g\langle g \rangle} \neq 0
\end{align*}
in consequence $g=1$, and 
\begin{align*}
Def^G_{G/N} e^G_{G,1} = \frac{1-p^{r-1}}{p}  e^{G/N}_{G/N,N} =0.
\end{align*}
This means $p^{r-1} = 1 $ (mod $ q$). On the other hand, suppose $G$ is an elementary abelian  group  and  $p^{r-1} = 1 $ (mod $ q$). Then by 	Proposition \ref{res-ind-fiber} and Proposition \ref{m H g},
$e^G_{G,1} \in \underline{E_G} \cap  \underline{\underline{E_G}}$.
\end{proof}

Let $K$ and $H$ elementary abelian  subgroups of $G$ of  orders  $p^{r_K}$  and  $p^{r_H}$ respectively   with $r_K, r_H \in \mathcal{I}$. By Corollary  13 of \cite{cocskun}  the ideal generate by  $e^K_{K,1}$ is contained in the ideal generate by $e^H_{H,1}$ if only if $|K| \leq |H|$. This means $(K, e^K_{K,1}) \gg (H, e^H_{H,1}) $ if only if $|K| \leq |H|$. As a consequence, the classification given in  Theorem  \ref{thm clasific} is equal to a classification given in Corollary 13 of \cite{cocskun}.

\subsection{The slice Burnside functor}

The third example  we will see is the slice Burnside functor, this functor was  studied by  Bouc  in \cite{boucslice} and  Tounkara  in \cite{slicefunctor}. We defined  this functor in  subsection \ref{slice functor}.  In  Corollary 4.7 of \cite{boucslice} Serge Bouc   proved that  $\mathbb{Q} \otimes_\mathbb{Z }\Xi(G)$ (Definition \ref{xi(G)}) is a   finite dimensional split semisimple commutative   $\mathbb{Q}$-algebra, for all finite groups $G$.\\
By  Theorem 4.6 of \cite{boucslice},  the group $\Xi(G)$ is free with basis \-$\lbrace\langle T, S \rangle_G \in \Xi (G) | (T, S) \in 
[\Pi(G)]\rbrace$, where $[\Pi(G)]$ is a set of representatives of conjugacy classes of slices of $G$.\\
By Corollary 4.7   in  \cite{boucslice} The commutative $\mathbb{Q}$-algebra $\mathbb{Q}\Xi (G)$ (= $\mathbb{Q} \otimes_\mathbb{Z} \Xi (G)$) is a finite dimensional split semisimple commutative algebra.

\begin{nota}
For a slice $(T, S)$ of the group $G$, set
\begin{align*}
\xi_{T,S}^G := \frac{1}{|N_G(T,S)|} \sum_{U\leq S\leq V\leq T} |U| \mu (U,S)\mu (V,T) \langle U,V\rangle_G
\end{align*}
where $\mu$ is the Möbius function of the poset of subgroups of $G$ and $N_G(T, S) = N_G(T)\cap 
N_G(S)$.
\end{nota}
\begin{prop} [Theorem 5.2 of \cite{boucslice}] Let $G$ be a finite group. Then the elements $\xi^G_{
T,S}$, for $(T, S) \in [\Pi(G)]$ are the primitive idempotents of $\mathbb{Q} \Xi(G)$.
\end{prop}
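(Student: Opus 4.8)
The plan is to adapt the classical argument for the ordinary Burnside ring (Theorem 2.5.2 of \cite{serge-biset}): build a ``mark'' homomorphism from $\mathbb{Q}\Xi(G)$ to a product of copies of $\mathbb{Q}$ indexed by the conjugacy classes of slices, prove it is an isomorphism of $\mathbb{Q}$-algebras, and then check that the elements $\xi^{G}_{T,S}$ are precisely the preimages of the standard basis vectors, which are the primitive idempotents of the product.

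First I would define, for every slice $(T,S)$ of $G$, a linear map $\phi_{T,S}\colon \Xi(G)\to\mathbb{Z}$ on generators by $\phi_{T,S}(\pi(f\colon X\to Y)):=|\{x\in X^{S}\mid f(x)\in Y^{T}\}|$. One checks that $\phi_{T,S}$ vanishes on the defining relations of Definition \ref{xi(G)}: it is additive over disjoint unions in the source, and replacing the target $Y$ by the image $f(X)$ does not change it, since for $x\in X$ one has $f(x)\in Y^{T}$ iff $f(x)\in f(X)^{T}$. It also depends only on the $G$-conjugacy class of $(T,S)$. Using that the ring product of $\Xi(G)$ sends $\pi(f)$ and $\pi(g)$ to the class of $X\times Z\xrightarrow{f\times g}Y\times W$ with the diagonal $G$-action (external product followed by restriction to the diagonal, as in Lemma \ref{equi de defi}), the identities $(X\times Z)^{S}=X^{S}\times Z^{S}$ and $(Y\times W)^{T}=Y^{T}\times W^{T}$ give $\phi_{T,S}(\pi(f)\pi(g))=\phi_{T,S}(\pi(f))\phi_{T,S}(\pi(g))$; and $\phi_{T,S}$ sends the unit $\pi(\bullet\to\bullet)$ to $1$. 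Hence $\phi:=(\phi_{T,S})_{(T,S)\in[\Pi(G)]}\colon \mathbb{Q}\Xi(G)\longrightarrow \prod_{(T,S)\in[\Pi(G)]}\mathbb{Q}$ is a homomorphism of unital $\mathbb{Q}$-algebras.

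Next I would evaluate $\phi$ on the $\mathbb{Z}$-basis $\{\langle T,S\rangle_{G}\}$ of $\Xi(G)$ provided by Theorem 4.6 of \cite{boucslice}. Since $\langle T,S\rangle_{G}=\pi(G/S\to G/T)$, the number $\phi_{V,U}(\langle T,S\rangle_{G})$ counts the cosets $gS\in G/S$ with ${}^{g^{-1}}U\le S$ and ${}^{g^{-1}}V\le T$; this is nonzero precisely when the slice $(V,U)$ is subconjugate to $(T,S)$ (some $G$-conjugate of $(V,U)$ lies componentwise in $(T,S)$), and the diagonal value is $|N_{G}(T,S)|/|S|\neq 0$ since $S\le N_{G}(T,S)$. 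Subconjugacy induces a partial order on $[\Pi(G)]$, and with respect to it the ``mark table'' $\bigl(\phi_{V,U}(\langle T,S\rangle_{G})\bigr)$ is triangular with nonzero diagonal, hence invertible over $\mathbb{Q}$; therefore $\phi$ sends a basis to a linearly independent family, so it is injective, and since source and target both have $\mathbb{Q}$-dimension $|[\Pi(G)]|$, it is an isomorphism of $\mathbb{Q}$-algebras.

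Finally I would show $\phi_{V,U}(\xi^{G}_{T,S})=\delta_{[(V,U)],[(T,S)]}$ by substituting the formula of the Notation preceding the statement and inverting the triangular mark table: summing first over the subgroups between $U$ and $S$ and then over those between $S$ and $T$, the count collapses by the defining identities of the Möbius function of the subgroup lattice, while the prefactor $1/|N_{G}(T,S)|$ together with the $|U|$-weights normalises the diagonal value to $1$; this is the exact slice analogue of the computation $m_{K}(e^{G}_{H})=\delta_{(K),(H)}$ for the ordinary Burnside ring. Consequently $\phi(\xi^{G}_{T,S})$ is the standard basis vector at $(T,S)$; since those vectors are the primitive idempotents of $\prod_{(T,S)}\mathbb{Q}$ and $\phi$ is an algebra isomorphism, the elements $\xi^{G}_{T,S}$, for $(T,S)\in[\Pi(G)]$, are exactly the primitive idempotents of $\mathbb{Q}\Xi(G)$. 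The main obstacle is the combinatorial bookkeeping in the last two paragraphs: pinning down the correct subconjugacy order on slices that makes the mark table triangular, and carrying out the double Möbius inversion with the conjugacy-class normalisation; the construction of $\phi$ and the deduction of the statement from its being an isomorphism are then formal.
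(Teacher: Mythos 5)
Your argument is correct and is essentially the proof that Bouc gives for Theorem 5.2 of \cite{boucslice} (the present paper only quotes that result without proof): the mark morphism $\phi_{T,S}(\pi(f))=|\{x\in X^{S}\mid f(x)\in Y^{T}\}|$, its compatibility with the defining relations and the product, the triangularity of the mark table with respect to subconjugacy of slices, and the double M\"obius inversion normalised by $1/|N_G(T,S)|$ are exactly the ingredients used there. The only cosmetic point is that the displayed formula for $\xi^G_{T,S}$ in the paper writes $\langle U,V\rangle_G$ where it should read $\langle V,U\rangle_G$; your computation uses the corrected form.
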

\begin{prop} [Proposition 4.4 of \cite{slicefunctor}]  Let $N$ be a normal subgroup of $G$. Then 
\begin{align*}
Def^G_{G/N} \xi_{G,S}^G = m_{G,S,N} \xi_{G/N, SN/N}^{G/N},
\end{align*}
where
\begin{align*}
m_{G,S,N}=\frac{[N_G(SN): SN]}{|N_G(S)|} \sum_{\substack{ U\leq S\leq V\leq T \\ VN=G\\ UN=SN}} |U|\mu(U,S)\mu(V,T).
\end{align*}
\end{prop}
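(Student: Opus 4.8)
The plan is to apply $Def^G_{G/N}$ directly to the explicit formula for the primitive idempotent $\xi^G_{G,S}$, using the behaviour of deflation on the slice basis of $\Xi$, and then to reorganize the resulting sum. Observe first that the slice under consideration is $(G,S)$, so the symbol $T$ appearing in the statement of $m_{G,S,N}$ is here the group $G$ itself, and that $N_G(G,S)=N_G(G)\cap N_G(S)=N_G(S)$, the normalising factor occurring in $\xi^G_{G,S}$.

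I would first record the effect of deflation on a basis element $\langle V,U\rangle_G=\pi(G/U\to G/V)$ attached to a slice $(V,U)$ of $G$. Since $\Xi$, evaluated at the $(G/N,G)$-biset defining $Def^G_{G/N}$, sends a morphism $f\colon X\to Y$ of $G$-sets to $(G/N)\times_G f$, and since for every subgroup $W\le G$ there is a $(G/N)$-equivariant isomorphism $(G/N)\times_G(G/W)\cong(G/N)/(WN/N)$ which is natural in $W$ and hence compatible with the canonical surjection $G/U\to G/V$, one obtains
\[
Def^G_{G/N}\langle V,U\rangle_G=\langle VN/N,\ UN/N\rangle_{G/N}
\]
(both source and target of the relevant morphism of $(G/N)$-sets being transitive, this also follows from Lemma~\ref{base de slice}). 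Substituting into the definition of $\xi^G_{G,S}$ and using linearity of $Def^G_{G/N}$ yields
\[
Def^G_{G/N}\,\xi^G_{G,S}=\frac{1}{|N_G(S)|}\sum_{U\le S\le V\le G}|U|\,\mu(U,S)\,\mu(V,G)\,\langle VN/N,\ UN/N\rangle_{G/N}.
\]

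Next I would invoke part (d) of Theorem~\ref{teodeoper}: since $\mathbb{Q}\Xi$ satisfies the standing hypotheses of Section~\ref{biset and ideals} (by Corollary~4.7 of \cite{boucslice}) and $\xi^G_{G,S}$ is one of the primitive idempotents of $\mathbb{Q}\Xi(G)$ (Theorem~5.2 of \cite{boucslice}), we already know that $Def^G_{G/N}\xi^G_{G,S}=m\,\xi'$ for some scalar $m$ and some primitive idempotent $\xi'$ of $\mathbb{Q}\Xi(G/N)$. To identify $\xi'$, I would expand the last display in the basis $\{\langle T',S'\rangle_{G/N}:(T',S')\in[\Pi(G/N)]\}$ of $\Xi(G/N)$ (Theorem~4.6 of \cite{boucslice}) and compare: the expansion of $\xi^{G/N}_{T',S'}$ involves only basis elements with top group contained in $T'$ and bottom group contained in $S'$, whereas every basis element occurring on the right above has top group of the form $VN/N$ with $V\le G$ and bottom group of the form $UN/N$ with $U\le S$; matching the coefficient of $\langle G/N,SN/N\rangle_{G/N}$ then forces $\xi'=\xi^{G/N}_{G/N,SN/N}$. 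Finally I would read off $m$ by comparing that coefficient on the two sides: on the left it equals $\frac{1}{|N_G(S)|}\sum|U|\mu(U,S)\mu(V,G)$, the sum running over those $U\le S\le V\le G$ with $VN=G$ and $UN=SN$; on the right it equals $m$ times the coefficient $\frac{[SN:N]}{|N_{G/N}(SN/N)|}=\frac{[SN:N]}{[N_G(SN):N]}$ of $\langle G/N,SN/N\rangle_{G/N}$ in $\xi^{G/N}_{G/N,SN/N}$. Since $[N_G(SN):N]/[SN:N]=[N_G(SN):SN]$, solving for $m$ gives exactly the stated expression for $m_{G,S,N}$ (with $T=G$).

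The step I expect to be the main obstacle is this identification: showing that $Def^G_{G/N}\xi^G_{G,S}$ really is supported on the single conjugacy class of the slice $(G/N,SN/N)$ — equivalently, that the coefficients at all basis elements $\langle VN/N,UN/N\rangle_{G/N}$ whose slice is not $(G/N)$-conjugate to $(G/N,SN/N)$ vanish and the coefficient at $(G/N,SN/N)$ is nonzero. Granting Theorem~\ref{teodeoper}(d) reduces this to a single but delicate Möbius-function computation; a self-contained proof would instead establish these cancellations by hand, the essential input being an identity of the shape $\sum_{V\colon VN=W,\ V\ge S}\mu(V,G)=0$ for $S\le W\lneq G$ — the slice analogue of the vanishing used by Bouc for the constants $m_{G,N}$ in the Burnside case. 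The remaining bookkeeping — tracking conjugacy classes of slices and normaliser orders as the sum over the pairs $(V,U)$ is regrouped according to the image slice $(VN/N,UN/N)$ in $G/N$ — is routine but must be carried out carefully to produce the factor $[N_G(SN):SN]/|N_G(S)|$.
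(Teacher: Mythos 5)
The paper gives no proof of this proposition; it is quoted verbatim from Proposition 4.4 of \cite{slicefunctor}, so your attempt can only be judged on its own terms. Your overall strategy is sound and is essentially the standard one: deflate the basis elements via $Def^G_{G/N}\langle V,U\rangle_G=\langle VN/N,UN/N\rangle_{G/N}$ (correct, and correctly justified), invoke Theorem~\ref{teodeoper}(d) (legitimate here, since $\mathbb{Q}\Xi$ satisfies the standing hypotheses) to know the answer is $m\,\xi'$ for a single primitive idempotent $\xi'$, and then identify $\xi'$ and $m$ by comparing the coefficient of $\langle G/N,SN/N\rangle_{G/N}$; your normaliser computation $[N_G(SN):N]/[SN:N]=[N_G(SN):SN]$ is also right.

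There are, however, two genuine gaps. First, the identification of $\xi'$ via the coefficient of $\langle G/N,SN/N\rangle$ only works when that coefficient is nonzero; when it vanishes — which is exactly the case $m_{G,S,N}=0$, i.e.\ the case of $T$-slices that the paper actually uses in Proposition~\ref{Tslice-MCgroup} — your comparison yields no information, and you must still rule out that the deflation is a nonzero multiple of some \emph{other} idempotent $\xi^{G/N}_{T',S'}$. Second, the ``essential input'' you name, $\sum_{V:\,VN=W,\ V\ge S}\mu(V,G)=0$ for $S\le W\lneq G$, is false as stated: take $G=C_p\times C_p$, $W=N$ a subgroup of order $p$, $S=1$; then the sum is $\mu(1,G)+\mu(N,G)=p-1\neq 0$. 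What the argument actually requires is not vanishing but \emph{proportionality}: after factoring the coefficient of $\langle W/N, X/N\rangle$ as $\bigl(\sum_{U\le S,\,UN=X}|U|\mu(U,S)\bigr)\bigl(\sum_{S\le V,\,VN=W}\mu(V,G)\bigr)$, one needs each factor to be a fixed multiple (independent of $X$, resp.\ of $W$) of $|X/N|\mu(X/N,SN/N)$, resp.\ $\mu(W/N,G/N)$ — the slice analogue of the Möbius identity underlying Bouc's computation of $Def^G_{G/N}e^G_G$ in the Burnside case, which is most cleanly established by computing the marks of the deflated element. As it stands, the proposal is an accurate plan whose decisive combinatorial step is missing and whose stated key lemma is incorrect.
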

\begin{defi}[Definition 7.5. of \cite{slicefunctor}]
 A slice $(T, S)$ of $T$ is called a $T$-slice (over $\mathbb{K}$) if for any non-trivial normal
subgroup $N$ of $T$, the constant $ m_{T,S,N}$ is equal to zero.
\end{defi}
\begin{prop} \label{Tslice-MCgroup}
Let $G$ be a finite group. $G$ is an $MC$-group of $ \mathbb{Q} \Xi$  if only if there exists a $T$-slice $(G,S)$ of $G$.

\end{prop}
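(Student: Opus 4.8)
The plan is to pin down, among the primitive idempotents $\xi^G_{T,S}$ of $\mathbb{Q}\Xi(G)$ (Theorem 5.2 of \cite{boucslice}), which ones can lie in $\underline{E_G}\cap\underline{\underline{E_G}}$, and to recognize that these are exactly the idempotents attached to the $T$-slices of $G$. The membership $\xi^G_{T,S}\in\underline{\underline{E_G}}$ will be read off from the deflation formula, and $\xi^G_{T,S}\in\underline{E_G}$ from the restriction formula for these idempotents.

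I would use two ingredients. The first is the deflation formula, Proposition 4.4 of \cite{slicefunctor}: for $N\unlhd G$ one has $\Xi(Def^G_{G/N})\xi^G_{G,S}=m_{G,S,N}\,\xi^{G/N}_{G/N,SN/N}$, where $\xi^{G/N}_{G/N,SN/N}$ is again a primitive idempotent of $\mathbb{Q}\Xi(G/N)$, hence nonzero. The second is the restriction formula for the $\xi^G_{T,S}$, the slice analogue of Theorem 5.2.4 of \cite{serge-biset}: for $K\leq G$,
\[
\Xi(Res^G_K)\,\xi^G_{T,S}=\sum \xi^K_{T',S'},
\]
the sum running over a set of representatives of the $K$-conjugacy classes of slices $(T',S')$ of $K$ that are $G$-conjugate to $(T,S)$. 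In particular $\Xi(Res^G_K)\xi^G_{T,S}=0$ as soon as no conjugate of $T$ is contained in $K$, while $\Xi(Res^G_T)\xi^G_{T,S}$ is nonzero since its expansion includes the term $\xi^T_{T,S}$. If this formula is not directly available in \cite{boucslice} or \cite{slicefunctor}, it can be obtained from the phantom/mark homomorphism used to prove Corollary 4.7 of \cite{boucslice}: the coordinate of $\xi^G_{T,S}$ at a slice $(T',S')$ of a subgroup is $1$ or $0$ according as $(T',S')$ is or is not $G$-conjugate to $(T,S)$, and combined with Theorem \ref{teodeoper}(a) this determines which idempotents occur in $\Xi(Res^G_K)\xi^G_{T,S}$.

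Granting these, both implications are short. Suppose first that $G$ is an $MC$-group, so by Definition \ref{A-group} some $\xi^G_{T,S}$ lies in $\underline{E_G}\cap\underline{\underline{E_G}}$. If $T$ were a proper subgroup of $G$, then $\Xi(Res^G_T)\xi^G_{T,S}\neq 0$ would contradict $\xi^G_{T,S}\in\underline{E_G}$, so $T=G$. From $\xi^G_{G,S}\in\underline{\underline{E_G}}$ we get $\Xi(Def^G_{G/N})\xi^G_{G,S}=0$ for every $1\neq N\unlhd G$, hence $m_{G,S,N}\,\xi^{G/N}_{G/N,SN/N}=0$ and therefore $m_{G,S,N}=0$ for all such $N$; that is, $(G,S)$ is a $T$-slice of $G$ (Definition 7.5 of \cite{slicefunctor}). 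Conversely, if $(G,S)$ is a $T$-slice of $G$, then $m_{G,S,N}=0$ for all $1\neq N\unlhd G$ gives $\Xi(Def^G_{G/N})\xi^G_{G,S}=0$, so $\xi^G_{G,S}\in\underline{\underline{E_G}}$; and for every proper subgroup $K<G$ no conjugate of $G$ lies in $K$, so the restriction formula gives $\Xi(Res^G_K)\xi^G_{G,S}=0$, i.e. $\xi^G_{G,S}\in\underline{E_G}$. Thus $\xi^G_{G,S}\in\underline{E_G}\cap\underline{\underline{E_G}}$ and $G$ is an $MC$-group.

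I expect the only real obstacle to be the restriction formula for the $\xi^G_{T,S}$: the deflation half is quoted verbatim from \cite{slicefunctor}, but the argument also needs that restriction to a proper subgroup annihilates $\xi^G_{G,S}$ (and that $\xi^G_{T,S}$ survives restriction to $T$), which is the slice counterpart of the fact that $G$ is a minimal group for the ideal generated by $e^G_G$ in the Burnside case. If this is not recorded in \cite{boucslice} or \cite{slicefunctor}, it must be derived through the ghost/mark homomorphism behind Corollary 4.7 of \cite{boucslice}.
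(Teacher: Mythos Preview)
Your proposal is correct and follows essentially the same route as the paper's own proof: both use the deflation formula (Proposition 4.4 of \cite{slicefunctor}) together with a restriction formula for the idempotents $\xi^G_{T,S}$ to force $T=G$ and to characterize membership in $\underline{E_G}\cap\underline{\underline{E_G}}$ by the vanishing of the constants $m_{G,S,N}$. The restriction formula you need is in fact recorded as Proposition 4.1 of \cite{slicefunctor}, so your hedging about deriving it from the ghost map is unnecessary.
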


\begin{proof}First, suppose that $G$ is an $MC$-group of $ \mathbb{Q} \Xi$. Then there exists $\xi_{T, S}^G \in \mathbb{Q}\Xi (G)$ such that\- $\mathbb{Q}\Xi (Res^G_H) (\xi_{T, S}^G) =0$ for all $H\lneq G$, and $\mathbb{Q}\Xi(Def^G_{G/N})(\xi_{T, S}^G)=0$ for all $1\neq N \unlhd G $. If $T\neq G$ by Proposition 4.1 of \cite{slicefunctor}, we have
\begin{align*}
\mathbb{Q}\Xi(Res^G_T)(\xi_{T, S}^G)= \sum_{\substack{(T, S^\prime) \in [\Pi(T)]\\ (T,S^\prime)=_G (T, S)}} \xi^T_{(T,S^\prime)} \neq 0.
\end{align*} 
This contradiction shows that $T=G$.  Since $\mathbb{Q}\Xi (Def^G_{G/N}) (\xi_{G, S}^G) \neq 0$  for all normal subgroup $N$ of $G$ and the Proposition 4.4  in \cite{slicefunctor},  one has $(G,S)$ is a $T$-slice.  On the other hand, now suppose that there exists a $T$-slice $(G,S)$ of $G$. By Proposition 4.4 of \cite{slicefunctor}, we have
\begin{align*}
\mathbb{Q}\Xi (Def^G_{G/N})(\xi_{G,S}^G)=0
\end{align*}
for all $1\neq N\unlhd G$  and by Proposition 4.1 of \cite{slicefunctor}, one has 
\begin{align*}
\mathbb{Q}\Xi (Res^G_H) (\xi_{G,S}^G)=0
\end{align*}
for $H\subsetneqq G$. Thus $G$ is an $MC$-group of $ \mathbb{Q} \Xi$.
\end{proof}

\begin{prop} \label{cocient-relation}
Let $(T,S)$ and $(V,U)$  be slices. Then $(V,U) \twoheadrightarrow (T,S)$ (see Subsection \ref{slice functor}) if only if $(\xi_{V,U}^V, V) \gg(\xi_{T,S}^T, T) $(Definition \ref{gg}).
\end{prop}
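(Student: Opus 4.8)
By Lemma~\ref{lema menor} the assertion $(\xi_{V,U}^V,V)\gg(\xi_{T,S}^T,T)$ is the inclusion $\textbf{e}_\textbf{V}\subseteq\textbf{e}_\textbf{T}$ of the ideals of $\mathbb{Q}\Xi$ generated by $\xi_{V,U}^V$ and by $\xi_{T,S}^T$, that is, the condition $\xi_{V,U}^V\in\textbf{e}_\textbf{T}(V)$; so I would prove that $(V,U)\twoheadrightarrow(T,S)$ is equivalent to $\xi_{V,U}^V\in\textbf{e}_\textbf{T}(V)$ (the relation $\gg$ of Definition~\ref{gg} being literally defined only when $(T,S)$ and $(V,U)$ are $T$-slices, a hypothesis I use below). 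The first step is to make $\textbf{e}_\textbf{T}(V)$ explicit: by Remark~\ref{inter}, Definition~\ref{P_A} and Lemma~\ref{a comp b} (applied with $G=1$),
\begin{align*}
\textbf{e}_\textbf{T}(V)=\mathbb{Q}\Xi(Def^{V\times T}_{V})\Bigl(\mathbb{Q}\Xi(V\times T)\cdot\mathbb{Q}\Xi(Inf^{V\times T}_{T})(\xi_{T,S}^T)\Bigr).
\end{align*}
Since $\mathbb{Q}\Xi(Inf^{V\times T}_{T})$ is a ring homomorphism, $\mathbb{Q}\Xi(Inf^{V\times T}_{T})(\xi_{T,S}^T)$ is a sum of primitive idempotents of $\mathbb{Q}\Xi(V\times T)$ (Theorem~\ref{teodeoper}(c)), and $\mathbb{Q}\Xi(Def^{V\times T}_{V})$ carries each primitive idempotent to a scalar multiple of a single one (Theorem~\ref{teodeoper}(d)). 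Hence $\xi_{V,U}^V\in\textbf{e}_\textbf{T}(V)$ if and only if there is a slice $(W,W')$ of $V\times T$ such that $\xi_{W,W'}^{V\times T}$ occurs in $\mathbb{Q}\Xi(Inf^{V\times T}_{T})(\xi_{T,S}^T)$ and $\mathbb{Q}\Xi(Def^{V\times T}_{V})(\xi_{W,W'}^{V\times T})$ is a nonzero multiple of $\xi_{V,U}^V$. From here everything is about subgroups of $V\times T$, which Goursat's lemma describes by pairs of normal sections $V_0\unlhd V_1\leq V$, $T_0\unlhd T_1\leq T$ and an isomorphism $V_1/V_0\xrightarrow{\ \sim\ }T_1/T_0$.

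To prove $\Rightarrow$, given a surjective homomorphism $\phi\colon V\to T$ with $\phi(U)=S$ I would take $W=\{(v,\phi(v))\mid v\in V\}$, $W'=\{(u,\phi(u))\mid u\in U\}$, so $(W,W')$ is a slice of $V\times T$ with $p_1(W)=V$, $p_1(W')=U$, $p_2(W)=T$, $p_2(W')=S$, and $W\cap(1\times T)=1$. Since $W$ is a complement of $1\times T$ in $V\times T$, a computation with the idempotent formula of \cite{boucslice} — of the same type as the Burnside computation of $B(Def^G_{G/N})(e^G_H)$ for $H$ a complement of $N$ — shows that $\mathbb{Q}\Xi(Def^{V\times T}_{V})(\xi_{W,W'}^{V\times T})$ is a nonzero multiple of $\xi_{p_1(W),p_1(W')}^V=\xi_{V,U}^V$. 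That $\xi_{W,W'}^{V\times T}$ occurs in $\mathbb{Q}\Xi(Inf^{V\times T}_{T})(\xi_{T,S}^T)$ rather than in $\mathbb{Q}\Xi(Inf^{V\times T}_{T})(\xi_{T',S'}^T)$ for some other slice follows because these idempotents are pairwise orthogonal with sum $1$ (unitality of the ring homomorphism $\mathbb{Q}\Xi(Inf^{V\times T}_{T})$), so $\xi_{W,W'}^{V\times T}$ lies in exactly one of them, and one checks by the deflation formula of \cite{slicefunctor}, using that $(T,S)$ is a $T$-slice, that $\mathbb{Q}\Xi(Def^{V\times T}_{T})(\xi_{W,W'}^{V\times T})$ is a nonzero multiple of $\xi_{T,S}^T$, which pins down that slice as $(T,S)$. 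Taking $\beta=\xi_{W,W'}^{V\times T}$, one then gets $\beta\circ\xi_{T,S}^T$ equal to a nonzero multiple of $\xi_{V,U}^V$, so $\xi_{V,U}^V\in\textbf{e}_\textbf{T}(V)$.

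For $\Leftarrow$ I would take a slice $(W,W')$ of $V\times T$ realizing the membership and unwind the two conditions. That $\mathbb{Q}\Xi(Def^{V\times T}_{V})(\xi_{W,W'}^{V\times T})$ is a nonzero multiple of $\xi_{V,U}^V$ forces $p_1(W)=V$, $p_1(W')=U$, and the nonvanishing of the associated constant; that $\xi_{W,W'}^{V\times T}$ occurs in $\mathbb{Q}\Xi(Inf^{V\times T}_{T})(\xi_{T,S}^T)$ forces, after deflation along $V\times1$, that $(p_2(W),p_2(W'))=(T,S)$ with a nonvanishing constant. Writing $W$ in Goursat form, these two nonvanishing constraints — and this is exactly where the $T$-slice hypotheses on $(T,S)$ and $(V,U)$ enter, through the behaviour of the constants $m$ of Proposition~4.4 and Definition~7.5 of \cite{slicefunctor} — can hold only if $W\cap(1\times T)=1$, i.e.\ only if $W$ is the graph of a surjective homomorphism $\phi\colon V\to T$; then $p_1(W')=U$ and $p_2(W')=S$ say exactly that $W'=\{(u,\phi(u))\mid u\in U\}$ and $\phi(U)=S$, so $(V,U)\twoheadrightarrow(T,S)$.

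The main obstacle, in both directions, is the control of the two deflation constants attached to a slice of $V\times T$: one must show they do not vanish for graph slices and, for $\Leftarrow$, that they can be non-zero only for graph slices. This needs the deflation formula for $\mathbb{Q}\Xi$ for arbitrary slices (not only those whose top is the whole group) together with the $T$-slice hypotheses; alternatively, once Lemma~\ref{lema menor} is available, the proposition follows directly from Tounkara's classification of the ideals of $\mathbb{Q}\Xi$ in \cite{slicefunctor}, exactly as the analogous statement for $\mathbb{Q}B$ follows from Theorem~5.4.14 of \cite{serge-biset}.
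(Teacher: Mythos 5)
The paper's entire proof is the two--citation argument you relegate to your final sentence: Theorem~7.1 of \cite{slicefunctor} gives $(V,U)\twoheadrightarrow(T,S)\iff\langle\xi_{V,U}^V\rangle\subseteq\langle\xi_{T,S}^T\rangle$, and Lemma~\ref{lema menor} gives $\langle\xi_{V,U}^V\rangle\subseteq\langle\xi_{T,S}^T\rangle\iff(\xi_{V,U}^V,V)\gg(\xi_{T,S}^T,T)$. So your ``alternative'' route is in fact the intended proof, and it is complete as stated; you correctly identified both ingredients.

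Your main argument — the direct Goursat analysis of slices of $V\times T$ — is a genuinely different and more self-contained route, but as written it has a gap precisely at the point you flag yourself: the nonvanishing of the deflation constant $\mathbb{Q}\Xi(Def^{V\times T}_V)(\xi^{V\times T}_{W,W'})$ for graph slices, and, in the converse direction, the claim that nonvanishing of the two constants forces $W\cap(1\times T)=1$. Both are asserted (``a computation \dots shows'', ``one checks'', ``can hold only if'') rather than proved, and neither follows from the deflation formula quoted in the paper (Proposition~4.4 of \cite{slicefunctor}), which applies only to slices whose top is the whole group; for a general slice $(W,W')$ of $V\times T$ you would need an analogue of Lemma~2.2 of \cite{relativeBgroup} for $\mathbb{Q}\Xi$, which is exactly the hard content of Tounkara's Theorem~7.1. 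There is also a small hypothesis mismatch worth noting (present in the paper's statement as well): $\gg$ is only defined on $\mathcal{M}$, so the proposition implicitly assumes $(V,U)$ and $(T,S)$ are $T$-slices; you handle this correctly by making the assumption explicit. In short: keep the last sentence as the proof and either drop the direct computation or supply the missing deflation lemma for arbitrary slices.
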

\begin{proof}
By Theorem 7.1 of \cite{slicefunctor}, we have $(V,U) \twoheadrightarrow (T,S)$ if only if $\langle \xi_{V,U}^V  \rangle \subseteq \langle \xi_{T,S}^T \rangle$, and by Lemma \ref{lema menor},  $\langle \xi_{V,U}^V  \rangle \subseteq \langle \xi_{T,S}^T \rangle $ if only if $(\xi_{V,U}^V, V) \gg(\xi_{T,S}^T, T) $.
\end{proof}
Let  \textbf{T}-slice  the set of all $T$-slice of $\mathbb{Q}\Xi$, we denote  [\textbf{T}-slice] a set of representative of isomorphism  classes of  \textbf{T}-slice,  we have ([\textbf{T}-slice], $  \twoheadrightarrow$), is a poset. \\
 By Proposition\ref{Tslice-MCgroup},  Proposition   \ref{cocient-relation}, Proposition 8.8 in  \cite{slicefunctor} and Theorem \ref{thm clasific}. One has the poset of ideals of $\mathbb{Q}\Xi$ is isomorphic to the poset of closed subsets of [\textbf{T}-slice].

\subsection{The functor $\mathbb{K}B_K$ } 

Now we will work on the shifted  Burnside functor, $\mathbb{K}B_K$  where $K$ is a finite group and $\mathbb{K}$ is  a field with characteristic 0. This example is studied in \cite{relativeBgroup} by  Serge Bouc, we defined this functor in  subsection  \ref{slice functor}.
\begin{defi}[Definition 3.1 of \cite{relativeBgroup}]
$ $
\begin{itemize}
\item For a finite group $K$, let $grp_{\Downarrow K}$ denote the following category:
\begin{itemize}
\item The objects are finite groups over $K$, i.e. pairs $(L, \phi)$, where $L$ is
a finite group and $\phi : L \longrightarrow  K $ is a group homomorphism.
\item A morphism $f : (L,   \phi) \longrightarrow  (L^\prime , \phi^\prime) $ of groups over $K$ in the category $grp_{\Downarrow K}$ is a group homomorphism $f : L \longrightarrow  L^\prime$ such that there exists some inner automorphism $i$ of $K$ with $i \circ \phi = \phi^\prime \circ f$.
\item  The composition of morphisms in $grp_{\Downarrow K}$ is the composition of
group  homomorphism, and the identity morphism of $(L, \phi)$ is the
identity automorphism of $L$.
\end{itemize}

\item If $(L,  \phi)$ and $(L^\prime , \phi^\prime)$ are groups over $K$, we say that ($L^\prime ,\phi^\prime)$ is a quotient of $(L, \phi )$, and we note $(L, \phi) \twoheadrightarrow (L^\prime , \phi^\prime)$, if there exists a morphism $f \in  Hom_{grp_{\Downarrow K}} ((L , \phi) ,  (L^\prime , \phi^\prime))$ with $f : L \longrightarrow L^\prime$ surjective. In this case, we will say that $f$ is a surjective morphism from $(L,  \phi)$ to $(L^\prime , \phi^\prime)$.
\end{itemize}
 \end{defi}
 
 \begin{nota}
When $(L, \phi)$ is a group over $K$, we denote by $L_\phi$ the
subgroup of $L \times K$ defined by 
\begin{align*}
L_\phi := \lbrace (l,\phi(l) ) \mid l\in L \rbrace.
\end{align*}
 \end{nota}

\begin{nota}
Let $(L, \phi)$ be a group over $K$. We denote by $\textbf{e}_{L,\phi}$ the ideal
of $\mathbb{K}B_K$ generated by $e_{L_\phi}^{ L\times K} \in \mathbb{K} B_K(L)$.
\end{nota}

\begin{cor}[Corollary 3.5 of \cite{relativeBgroup}] \label{p2 ideal}
Let $G$ be a finite group, and $L$ be a subgroup of $G \times K$.
Then the ideal of $\mathbb{K}B_K$ generated by  $e^{G\times K}_{L}$ is equal to the ideal of $\mathbb{K}B_K$ generated by $e^{ L\times K}_{L_{p_2}}$.
\end{cor}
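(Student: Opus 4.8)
The plan is to prove the two inclusions of ideals, after a reduction on the first coordinate of $L$. Write $G_{1}=p_{1}(L)\le G$; since $L\le p_{1}(L)\times p_{2}(L)\le G_{1}\times K$, the idempotent $e^{G_{1}\times K}_{L}$ is defined. I would first show that the ideal of $\mathbb{K}B_{K}$ generated by $e^{G\times K}_{L}$ equals the one generated by $e^{G_{1}\times K}_{L}$. By the restriction formula for the primitive idempotents of $\mathbb{K}B$ (see \cite{serge-biset}), for $H\le G$ the element $\mathbb{K}B_{K}(Res^{G}_{H})(e^{G\times K}_{L})=\mathbb{K}B(Res^{G\times K}_{H\times K})(e^{G\times K}_{L})$ is the sum of the $e^{H\times K}_{M}$ with $M\le H\times K$ conjugate to $L$ in $G\times K$; since conjugation in $G\times K$ preserves the order of the first projection, this is nonzero only when $|H|\ge|G_{1}|$, and $M=L$ occurs at $H=G_{1}$. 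Hence $e^{G_{1}\times K}_{L}$ appears in $\mathbb{K}B_{K}(Res^{G}_{G_{1}})(e^{G\times K}_{L})$ and lies in $\underline{E_{G_{1}}}$ for $\mathbb{K}B_{K}$; as in the proof of Lemma~\ref{Res-Ind}, the Mackey and Frobenius identities together with the orthogonality of the idempotents then give that $\mathbb{K}B_{K}(Ind^{G}_{G_{1}})(e^{G_{1}\times K}_{L})$ is a nonzero scalar multiple of $e^{G\times K}_{L}$, and as in Lemma~\ref{res-ideal} the two ideals coincide. So I may assume $p_{1}(L)=G$, that is, $L$ surjects onto $G$.

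Assume now $p_{1}(L)=G$ and put $M=L\cap(1\times K)\unlhd L$. The structural point is that the map $\psi\colon L\times K\to G\times K$, $\psi(l,k)=(p_{1}(l),k)$, is a surjective homomorphism with kernel $N=\{((1,m),1)\mid m\in M\}\cong M$, that $\psi(L_{p_{2}})=L$, and that $L_{p_{2}}\cap N=1$; thus $\psi$ induces an isomorphism $\bar{\psi}\colon(L\times K)/N\to G\times K$ carrying $L_{p_{2}}N/N$ onto $L$. Using Lemma~\ref{a comp b} to rewrite the relevant composites of $Inf$ and $Def$ as morphisms of $\mathcal{P}_{\mathbb{K}B_{K}}$ between the objects $G$ and $L$, and the criterion of Lemma~\ref{lema menor} applied to the Green biset functor $\mathbb{K}B_{K}$, the two inclusions between the ideal generated by $e^{G\times K}_{L}$ and the one generated by $e^{L\times K}_{L_{p_{2}}}$ both reduce to the statement that a suitable deflation of $e^{L\times K}_{L_{p_{2}}}$ along a subgroup of $N$, transported through $\bar{\psi}$, has a nonzero coefficient on the idempotent indexed by $L$.

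The nonvanishing just mentioned is the main obstacle. By the deflation formula for Burnside idempotents (\cite{serge-biset}; compare part (d) of Theorem~\ref{teodeoper} applied to $\mathbb{K}B$) the full deflation $\mathbb{K}B(Def^{L\times K}_{(L\times K)/N})(e^{L\times K}_{L_{p_{2}}})$ is a scalar multiple of $e^{(L\times K)/N}_{L_{p_{2}}N/N}$, but that scalar is an alternating sum of Möbius values and can be zero (already when the section involved is elementary abelian of rank at least two), so one may not deflate all of $N$ at once. I would circumvent this by induction on $|M|=|L\cap(1\times K)|$. If $M=1$ then $p_{1}|_{L}\colon L\to G$ is an isomorphism carrying $L_{p_{2}}\le L\times K$ onto $L\le G\times K$, so $e^{L\times K}_{L_{p_{2}}}$ and $e^{G\times K}_{L}$ are matched by an isomorphism of objects and generate the same ideal by part (e) of Theorem~\ref{teodeoper}. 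If $M\ne1$, I would pick a normal subgroup of $L$ that shrinks $M$ and deflate maximally along it, so that Lemma~\ref{lemadefla} furnishes a nonzero constant and Lemma~\ref{defl-ideal} rewrites the ideal as one generated by an idempotent of the form $e^{L'\times K}_{(L')_{p_{2}}}$ with $|L'\cap(1\times K)|<|M|$; the inductive hypothesis then finishes the argument. The delicate point is exactly that the maximal deflation can be chosen compatibly with the graph structure of $L_{p_{2}}$, so that the reduced idempotent is again of this shape; this is where the explicit combinatorics of the Burnside idempotents is really used. Taking the two inclusions together yields the asserted equality.
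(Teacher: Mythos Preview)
The paper does not give its own proof of this statement; it is quoted verbatim as Corollary~3.5 of \cite{relativeBgroup}. So there is nothing in the present paper to compare against, and I will simply assess your argument.

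Your reduction to $p_{1}(L)=G$ via restriction/induction is fine, and your structural observation is exactly the right one: the surjection $\psi=p_{1}\times\mathrm{id}_{K}\colon L\times K\to G\times K$ has kernel $N=M\times 1$ with $M=L\cap(1\times K)$, it maps $L_{p_{2}}$ onto $L$, and $L_{p_{2}}\cap N=1$. The gap is that you state this last equality and then fail to use it. The deflation formula you invoke (Lemma~2.2 of \cite{relativeBgroup}, also used in the proof of Proposition~\ref{B_K-A}) says
\[
\mathbb{K}B_{K}\bigl(Def^{\,L}_{L/M}\bigr)\bigl(e^{L\times K}_{L_{p_{2}}}\bigr)=\lambda\, m_{\,L_{p_{2}},\,L_{p_{2}}\cap(M\times 1)}\;e^{(L/M)\times K}_{\overline{L_{p_{2}}}}
\]
with $\lambda\neq 0$. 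Since $L_{p_{2}}\cap(M\times 1)=1$, the constant is $m_{L_{p_{2}},1}=1$, not an alternating M\"obius sum that might vanish. Hence the deflation is a nonzero scalar multiple of $e^{G\times K}_{L}$ directly, which gives one inclusion; and $e^{L\times K}_{L_{p_{2}}}$ appears as a summand of $\mathbb{K}B_{K}(Inf^{\,L}_{L/M})(e^{G\times K}_{L})$ because $\overline{L_{p_{2}}}=L$, which gives the other. No induction is needed.

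Your inductive detour is thus unnecessary, and as you yourself acknowledge, the ``delicate point'' in it---that a maximal deflation can be chosen so that the resulting idempotent is again of the form $e^{L'\times K}_{(L')_{p_{2}}}$---is left unproved. So the proposal as written has a genuine gap, but one that disappears once you exploit the equality $L_{p_{2}}\cap N=1$ you already recorded.
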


\begin{defi}[Definition 4.3. of \cite{relativeBgroup}]
Let $(L, \phi)$ be a group over $K$. We say that $(L, \phi)$ is
a $B_K$-group, or a $B$-group relative to $K$, if $m_{L,N} = 0$ for every non-trivial
normal subgroup $N$ of $L$ contained in $Ker \phi$.
\end{defi}

\begin{nota}
 Let $(L, \varphi)$ be a group over $K$. If $Q$ is a normal subgroup
of $L$, contained in $Ker \varphi$, and maximal such that $m_{L,Q} \neq 0$, we denote by
$\beta_K(L, \varphi)$ the quotient $(L/Q, \varphi/Q)$ of $(L, \varphi)$.
\end{nota}

\begin{cor}[Corollary 4.9 of \cite{relativeBgroup}]\label{bk ideal}
Let $(L, \varphi)$ be a group over $K$.
\begin{enumerate}
\item $\beta_K(L, \varphi)$ is well-defined up to isomorphism in $grp_{\Downarrow K}$.
\item  $\beta_K(L, \varphi)$ is a $B_K$-group, quotient of  $(L, \varphi)$.
\item If $(P, \psi)$ is a $B_K$-group, quotient of $(L, \varphi)$, then $(P, \psi)$ is a quotient of
$\beta_K(L, \varphi)$.
\item $\textbf{e}_{L,\varphi} = \textbf{e}_{\beta_K(L,\varphi)}.$
\end{enumerate}
\end{cor}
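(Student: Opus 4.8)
The plan is to re-derive Corollary 4.9 from the results of Sections \ref{biset and ideals}--\ref{ideals and idem}, specialized to $A=\mathbb{K}B_K$, together with Corollary \ref{p2 ideal}. First note that $\mathbb{K}B_K$ satisfies the standing hypotheses, since $\mathbb{K}B_K(G)=\mathbb{K}B(G\times K)$ is split semisimple, and that by Corollary \ref{p2 ideal} every ideal of $\mathbb{K}B_K$ generated by a primitive idempotent is of the form $\textbf{e}_{L,\varphi}$ for a group $(L,\varphi)$ over $K$, namely the ideal generated by the graph idempotent $e^{L\times K}_{L_\varphi}\in\mathbb{K}B_K(L)$; it is enough to study these. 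I would record at once that $e^{L\times K}_{L_\varphi}$ always lies in $\underline{E_L}$: any $(L\times K)$-conjugate of $L_\varphi$ still surjects onto $L$ in the first coordinate, hence is subconjugate to no $L'\times K$ with $L'<L$, so that $\mathbb{K}B_K(Res^L_{L'})(e^{L\times K}_{L_\varphi})=\mathbb{K}B(Res^{L\times K}_{L'\times K})(e^{L\times K}_{L_\varphi})=0$ by the restriction formula of \cite{serge-biset}. Thus for these idempotents only deflation can be non-zero on a proper subquotient.

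Next I would compute that deflation. If $N\unlhd L$ with $N\leq Ker\,\varphi$, then $N\times 1\unlhd L\times K$ and $N\times 1\leq L_\varphi$, the canonical isomorphism identifies $(L\times K)/(N\times 1)$ with $(L/N)\times K$, and the image of $L_\varphi$ is the graph $(L/N)_{\varphi/N}$; using the deflation formula of \cite{serge-biset} (equivalently, Theorem \ref{teodeoper}(d) together with an explicit $\mathbb{K}B$-computation) and the isomorphism invariance of the numbers $m_{\bullet,\bullet}$ under $\ell\mapsto(\ell,\varphi(\ell))$, one obtains that $\mathbb{K}B_K(Def^L_{L/N})(e^{L\times K}_{L_\varphi})$ is a non-zero scalar multiple of $e^{(L/N)\times K}_{(L/N)_{\varphi/N}}$ when $m_{L,N}\neq 0$, and is $0$ when $m_{L,N}=0$. (Deflations of $e^{L\times K}_{L_\varphi}$ by normal subgroups not contained in $Ker\,\varphi$ will not be needed.)

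Parts (2) and (4) then follow from the techniques of Lemmas \ref{lemadefla} and \ref{defl-ideal}. Put $\beta_K(L,\varphi)=(L/Q,\varphi/Q)$ with $Q\unlhd L$ contained in $Ker\,\varphi$ and maximal subject to $m_{L,Q}\neq0$; by construction it is a quotient of $(L,\varphi)$ in $grp_{\Downarrow K}$. For (2): a non-trivial normal subgroup of $L/Q$ inside $Ker(\varphi/Q)$ is $M/Q$ with $Q\lneq M\unlhd L$, $M\leq Ker\,\varphi$; applying $\mathbb{K}B$ to $Def^L_{L/M}=Def^{L/Q}_{L/M}\circ Def^L_{L/Q}$ on $e^L_L$ (and using $\mathbb{K}B(Def^G_{G/N})(e^G_G)=m_{G,N}e^{G/N}_{G/N}$) gives $m_{L,M}=m_{L,Q}\,m_{L/Q,M/Q}$, and maximality of $Q$ forces $m_{L,M}=0$, hence $m_{L/Q,M/Q}=0$; so $\beta_K(L,\varphi)$ is a $B_K$-group. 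For (4): by the deflation formula $e^{(L/Q)\times K}_{(L/Q)_{\varphi/Q}}\in\textbf{e}_{L,\varphi}(L/Q)$, so $\textbf{e}_{\beta_K(L,\varphi)}\subseteq\textbf{e}_{L,\varphi}$; conversely, running the Frobenius identities exactly as in the proof of Lemma \ref{lemadefla} shows $e^{L\times K}_{L_\varphi}\cdot\mathbb{K}B_K(Inf^L_{L/Q})(e^{(L/Q)\times K}_{(L/Q)_{\varphi/Q}})=e^{L\times K}_{L_\varphi}$, so $e^{L\times K}_{L_\varphi}$ is a summand of that inflation and lies in $\textbf{e}_{\beta_K(L,\varphi)}(L)$, giving $\textbf{e}_{L,\varphi}\subseteq\textbf{e}_{\beta_K(L,\varphi)}$ as in Lemma \ref{defl-ideal}. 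Hence $\textbf{e}_{L,\varphi}=\textbf{e}_{\beta_K(L,\varphi)}$.

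Parts (3) and (1) contain the real work; the plan is to pass, via Lemma \ref{lema menor}, to the preorder $\gg$ of Definition \ref{gg}. Two facts are needed: (a) if $(P,\psi)$ is a quotient of $(L',\varphi')$ in $grp_{\Downarrow K}$ then $\textbf{e}_{P,\psi}\subseteq\textbf{e}_{L',\varphi'}$; and (b), the $B_K$-analogue of Proposition 5.4.8 of \cite{serge-biset}, that for $B_K$-groups $(P,\psi)$ and $(P',\psi')$ one has $\textbf{e}_{P,\psi}\subseteq\textbf{e}_{P',\psi'}$ if and only if $(P,\psi)$ is a quotient of $(P',\psi')$ in $grp_{\Downarrow K}$; in particular (b) shows that a $B_K$-group is determined up to isomorphism in $grp_{\Downarrow K}$ by the ideal it generates. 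Granting (a) and (b): for (3), a $B_K$-group $(P,\psi)$ which is a quotient of $(L,\varphi)$ satisfies $\textbf{e}_{P,\psi}\subseteq\textbf{e}_{L,\varphi}=\textbf{e}_{\beta_K(L,\varphi)}$ by (a) and (4), and since $\beta_K(L,\varphi)$ is a $B_K$-group, (b) gives that $(P,\psi)$ is a quotient of $\beta_K(L,\varphi)$; for (1), two admissible choices $Q,Q'$ yield $B_K$-groups $(L/Q,\varphi/Q)$ and $(L/Q',\varphi/Q')$ generating the same ideal, hence isomorphic in $grp_{\Downarrow K}$ by (b). The main obstacle is precisely (a) and (b) --- above all (b) --- which our abstract Theorem \ref{thm clasific} does not deliver, since there the ideals are parametrised by the $MC$-groups of $\mathbb{K}B_K$, and these need not coincide with the $B_K$-groups (an $MC$-group of $\mathbb{K}B_K$ can be a proper quotient of the underlying group of a $B_K$-group and carry a non-graph primitive idempotent). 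As in \cite{relativeBgroup}, (a) and (b) have to be obtained from an explicit analysis of the morphisms in $\mathcal{P}_{\mathbb{K}B_K}$ --- equivalently, of the subgroups of the triple products $P\times P'\times K$ --- and of the action of deflation on the corresponding idempotents of $\mathbb{K}B$.
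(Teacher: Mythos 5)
First, note that the paper itself offers no proof of this statement: it is quoted verbatim as Corollary 4.9 of \cite{relativeBgroup} and used as an imported ingredient, so there is no internal argument to compare yours against; I can only assess your reconstruction on its own terms. Your treatment of parts (2) and (4) is sound and follows the same pattern as Lemmas \ref{lemadefla} and \ref{defl-ideal}: the identification of $\mathbb{K}B_K(Def^L_{L/N})(e^{L\times K}_{L_\varphi})$ with a scalar multiple of $e^{(L/N)\times K}_{(L/N)_{\varphi/N}}$, the scalar being governed by $m_{L,N}$, is exactly Lemma 2.2 of \cite{relativeBgroup} (already invoked in the proof of Proposition \ref{B_K-A}), the transitivity $m_{L,M}=m_{L,Q}\,m_{L/Q,M/Q}$ follows from composing deflations, and the Frobenius argument for the reverse inclusion in (4) is the one used in Lemma \ref{lemadefla}. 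Those two parts I would accept.

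The gap is in parts (1) and (3). You reduce them to your facts (a) and (b), and (b) is precisely Lemma 5.5 of \cite{relativeBgroup}, reproduced in this paper as Lemma \ref{bk iso}; in the source that lemma appears \emph{after} Corollary 4.9 and its proof relies on it (in particular on $\textbf{e}_{M,\psi}=\textbf{e}_{\beta_K(M,\psi)}$ and on part (3)), so invoking it here is circular unless you supply an independent proof, which you explicitly decline to do. Moreover, the standard non-circular route to (1) and (3) does not go through ideals at all: it rests on the Möbius-theoretic statement that $m_{L,N}\neq 0$ implies $m_{L/M,\,NM/M}\neq 0$ for normal subgroups $N,M\leq Ker\,\varphi$ (the relative analogue of Proposition 5.4.10 of \cite{serge-biset}). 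From that one gets uniqueness of the maximal $Q$ (hence (1)) and, for a $B_K$-group quotient $(L/M,\varphi/M)$, that $QM/M=1$, i.e.\ $Q\leq M$, which is (3). Your proposal never engages with this inequality, and the transitivity formula you do use ($m_{L,MQ}=m_{L,Q}\,m_{L/Q,MQ/Q}$) is not strong enough to derive it, since $m_{L,MQ}=0$ is consistent with both factors' behaviour. So (1) and (3) — which are the genuinely hard parts of the corollary — remain unproved in your write-up; you have correctly located the obstacle but not overcome it.
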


\begin{lem}[Lemma 5.5 of \cite{relativeBgroup}] \label{bk iso}
Let $(L, \varphi)$ be a $B_K$-group, and $(M, \psi)$ be a group over $K$.
Then, $\emph{\textbf{e}}_{M,\psi} \subseteq \emph{\textbf{e}}_{L,\varphi}$ if and only if $(M, \psi) \twoheadrightarrow  (L, \varphi)$.
\end{lem}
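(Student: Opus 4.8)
\emph{Sketch of the intended proof.} The plan is to establish the two implications separately. One direction, ``if $(M,\psi)$ surjects onto $(L,\varphi)$ then $\textbf{e}_{M,\psi}\subseteq\textbf{e}_{L,\varphi}$'', is a monotonicity statement that does not use that $(L,\varphi)$ is a $B_K$-group; the reverse implication is the substantive one and is where that hypothesis is needed.

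For the monotonicity direction I would fix a surjective group homomorphism $f\colon M\to L$ with $\varphi\circ f=\psi$; the inner automorphism of $K$ permitted in the definition of a morphism of groups over $K$ may be absorbed into this choice, since twisting $\varphi$ by an inner automorphism of $K$ replaces $L_\varphi$ by an $(L\times K)$-conjugate subgroup and hence changes neither $\textbf{e}_{L,\varphi}$ nor the containment to be proved. Put $N:=\ker f\times 1$. Then $N\unlhd M\times K$, and $N\subseteq M_\psi$ because $\varphi f=\psi$ forces $\ker f\subseteq\ker\psi$, while the isomorphism $(M\times K)/N\cong L\times K$, $(m,k)N\mapsto(f(m),k)$, carries $M_\psi/N$ to $L_\varphi$ and identifies $M_\psi$ with the full preimage of $L_\varphi$. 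The corresponding inflation biset is (the $K$-shift of) an honest $(M,L)$-biset, namely $Inf^{M\times K}_{(M\times K)/N}$ composed with that isomorphism, so $\mathbb{K}B_K$ applied to it sends $\textbf{e}_{L,\varphi}(L)$ into $\textbf{e}_{L,\varphi}(M)$; and by the mark homomorphism of $\mathbb{K}B(M\times K)$, the image of the primitive idempotent $e^{L\times K}_{L_\varphi}$ is a sum of primitive idempotents in which $e^{M\times K}_{M_\psi}$ occurs with coefficient $1$ (the coefficient is $1$ precisely because $M_\psi N=M_\psi$). Since $\textbf{e}_{L,\varphi}(M)$ is an ideal of the commutative ring $\mathbb{K}B(M\times K)$, multiplying that image by the idempotent $e^{M\times K}_{M_\psi}$ and invoking orthogonality gives $e^{M\times K}_{M_\psi}\in\textbf{e}_{L,\varphi}(M)$, i.e. $\textbf{e}_{M,\psi}\subseteq\textbf{e}_{L,\varphi}$.

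For the reverse implication, assume $\textbf{e}_{M,\psi}\subseteq\textbf{e}_{L,\varphi}$. By Corollary \ref{bk ideal}(4) one has $\textbf{e}_{M,\psi}=\textbf{e}_{\beta_K(M,\psi)}$ and $\beta_K(M,\psi)$ is a quotient of $(M,\psi)$, so it suffices to show $\beta_K(M,\psi)\twoheadrightarrow(L,\varphi)$ and then compose surjections; thus I may assume $(M,\psi)$ is itself a $B_K$-group. The containment says that $e^{M\times K}_{M_\psi}$ is a $\mathbb{K}$-linear combination of images of $e^{L\times K}_{L_\varphi}$ under morphisms $L\to M$ in $\mathcal{P}_{\mathbb{K}B_K}$. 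I would decompose each such morphism into the elementary operations $Ind$, $Inf$, $Iso$, $Def$, $Res$ of $\mathbb{K}B_K$ via the Goursat (butterfly) factorisation applied to subgroups of $M\times L\times K$, and determine which factorisations can contribute a nonzero $e^{M\times K}_{M_\psi}$-component. Here the hypothesis that $(L,\varphi)$ is a $B_K$-group is used exactly as the $B$-group property of the ordinary Burnside functor is used in Section \ref{examples}: $e^{L\times K}_{L_\varphi}$ is annihilated by $Res$ to any subgroup of $L\times K$ not containing an $(L\times K)$-conjugate of $L_\varphi$, and by every deflation that would properly shrink the $\ker\varphi$-part of $L_\varphi$. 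Consequently only factorisations that run through an isomorphism out of $L_\varphi$ followed solely by inflations and inductions survive, and chasing the subgroups along such a chain forces $(L,\varphi)$ to be isomorphic to a quotient of $(M,\psi)$, that is, $(M,\psi)\twoheadrightarrow(L,\varphi)$.

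I expect the main obstacle to be this reverse implication, more precisely the need to carry out the Goursat factorisation for the morphisms of $\mathcal{P}_{\mathbb{K}B_K}$ (keeping track of the $K$-coordinate through the middle $Def$ and $Inf$ steps is the delicate bookkeeping) and then to verify that ``$B_K$-group'' is exactly the hypothesis that annihilates every term which would otherwise obstruct reading off a surjection. Corollary \ref{p2 ideal} is what lets one replace arbitrary subgroups of products by graphs of homomorphisms, and Corollary \ref{bk ideal} is what lets one reduce the source side to a $B_K$-group; with both available the argument runs parallel to Bouc's treatment of $B$-groups for $\mathbb{K}B$.
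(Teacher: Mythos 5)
First, a point of reference: the paper does not prove this statement at all --- it is quoted verbatim as Lemma 5.5 of \cite{relativeBgroup} and used as a black box, so there is no in-paper proof to compare your argument against; your proposal has to stand on its own.

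Your monotonicity direction is essentially correct and complete: absorbing the inner automorphism of $K$ into a conjugation of $L_\varphi$, identifying $M_\psi$ with the full preimage of $L_\varphi$ under $(M\times K)\to (M\times K)/(\ker f\times 1)\cong L\times K$, and reading off the coefficient of $e^{M\times K}_{M_\psi}$ in the inflated idempotent is exactly the standard computation, and the conclusion $e^{M\times K}_{M_\psi}\in \textbf{e}_{L,\varphi}(M)$ follows as you say. The reverse implication, however, contains a genuine gap. Beyond the fact that the Goursat factorisation is only announced and never carried out, the pivotal claim --- that after restriction the $B_K$-hypothesis annihilates every deflation, so that ``only factorisations that run through an isomorphism out of $L_\varphi$ followed solely by inflations and inductions survive'' --- is false as stated. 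By the deflation formula quoted in the paper (equation preceding Proposition \ref{B_K-A}), $\mathbb{K}B_K(Def^L_{L/S})(e^{L\times K}_{L_\varphi})$ is a nonzero multiple of $e^{(L/S)\times K}_{\overline{L_\varphi}}$ whenever $S\cap\ker\varphi=1$, since the relevant constant is $m_{L_\varphi,(S\cap\ker\varphi)\times 1}=m_{L,1}=1$; the $B_K$-condition only controls normal subgroups \emph{contained in} $\ker\varphi$. These surviving deflations must be dealt with (e.g. by observing that $(\overline{L_\varphi},p_2)\cong(L,\varphi)$ as groups over $K$ and invoking Corollary \ref{p2 ideal} to see that the generated ideal is unchanged), and the final step --- extracting from a surviving $Ind\circ Inf$ tail an actual surjection $M\to L$ intertwining $\psi$ and $\varphi$ up to an inner automorphism of $K$ --- is asserted rather than proved. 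Until those two points are supplied, the ``only if'' half is a plan, not a proof.
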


\begin{nota}
We fix a set $\mathcal{S}_K$  of representatives of isomorphism classes
of objects in the category $grp_{\Downarrow K}$.  We let $\mathcal{B}_K$-gr denote the subset of $\mathcal{S}_K$ consisting of $B_K$-groups.

\end{nota}

\begin{thm}[Theorem 5.7 of \cite{relativeBgroup}]
 Let $\mathcal{I}_{\mathbb{K}B_K}$ be the lattice of ideals of $\mathbb{K}B_K$, ordered by inclusion of ideals, and $\mathcal{C}l_{\mathcal{B}_K\text{-}gr}$ be the lattice of closed subsets of $\mathcal{B}_K$-gr, ordered by inclusion of subsets. Then the map
\begin{align*}
I \in  \mathcal{I}_{\mathbb{K}B_K} \longmapsto \mathcal{P}_I: = \lbrace (L, \varphi) \in \mathcal{B}_K\text{-gr} \mid \textbf{e}_{(L,\varphi} \subseteq I \rbrace
\end{align*}
is an isomorphism of lattices from $\mathcal{I}_{\mathbb{K}B_K}$
to $\mathcal{C}l_{\mathcal{B}_K\text{-}gr}$. The inverse isomorphism
is the map
\begin{align*}
\mathcal{P} \in \mathcal{C}l_{\mathcal{B}_K\text{-}gr} \longmapsto I_\mathcal{P} = \sum_{(L,\varphi) \in \mathcal{P}} \textbf{e}_{(L,\varphi)},
\end{align*}
in particular $\mathcal{I}_{\mathbb{K}B_K}$ is completely distributive.
\end{thm}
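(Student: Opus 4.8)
The plan is to obtain this theorem as the specialization of the general classification Theorem \ref{thm clasific} to the Green biset functor $A=\mathbb{K}B_K$. First I would record that $\mathbb{K}B_K$ satisfies the standing hypothesis of Sections \ref{biset and ideals} and \ref{ideals and idem}: since $\mathbb{K}B_K(G)=\mathbb{K}B(G\times K)$ and $\mathbb{K}$ has characteristic $0$, each evaluation is a finite dimensional split semisimple commutative $\mathbb{K}$-algebra, whose set of primitive idempotents is $\{\,e^{G\times K}_{L}\mid L\in[S(G\times K)]\,\}$. Thus Theorem \ref{thm clasific} gives an isomorphism of posets $\Theta$ from the lattice $\mathcal{S}_{\mathbb{K}B_K}$ of ideals onto the lattice $Cl_{\mathbb{K}B_K}$ of closed subsets of $[\mathcal{M}]$, with inverse $\Psi$, and all that remains is to identify $([\mathcal{M}],\gg)$ for $\mathbb{K}B_K$ with $(\mathcal{B}_K\text{-gr},\twoheadrightarrow)$ in such a way that $\Theta$ and $\Psi$ become the maps in the statement.

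Next I would construct mutually inverse bijections between $[\mathcal{M}]$ and $\mathcal{B}_K\text{-gr}$. Going one way: given an $MC$-pair $(H,e_H)\in\mathcal{M}$, write $e_H=e^{H\times K}_{L}$ with $L\le H\times K$; by Corollary \ref{p2 ideal} one has $\textbf{e}_\textbf{H}=\textbf{e}_{L,p_2}$, and by Corollary \ref{bk ideal}(4) this equals $\textbf{e}_{\beta_K(L,p_2)}$, with $\beta_K(L,p_2)$ a $B_K$-group by Corollary \ref{bk ideal}(2); send $[H,e_H]$ to $\beta_K(L,p_2)$ (well-definedness in $L$ follows because an inner conjugation of $L$ inside $H\times K$ produces an isomorphic object of $grp_{\Downarrow K}$, and well-definedness on the $\sim$-class follows from the uniqueness in Corollary \ref{bk ideal}(1),(3) together with Lemma \ref{bk iso}). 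Going the other way: to a $B_K$-group $(P,\psi)$ associate the pair $(P,e^{P\times K}_{P_\psi})$. The point to check is that this is an $MC$-pair. The membership $e^{P\times K}_{P_\psi}\in\underline{E_P}$ is easy: restriction along a proper subgroup $H<P$ is computed in $\mathbb{K}B$ as $\mathbb{K}B(Res^{P\times K}_{H\times K})$, which kills $e^{P\times K}_{P_\psi}$ since $P_\psi$ surjects onto $P$ and so is not subconjugate to $H\times K$; hence by Lemmas \ref{Res-Ind} and \ref{res-ideal} $P$ is a minimal subgroup for $\textbf{e}_{P,\psi}$. The membership $e^{P\times K}_{P_\psi}\in\underline{\underline{E_P}}$ is precisely the $B_K$-group condition (see below). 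Using Lemmas \ref{res-ideal}, \ref{defl-ideal}, \ref{Ideal-Agrupo} and \ref{lema menor} one verifies these two assignments are inverse to each other.

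Then I would check the order and the closed subsets match. By Lemma \ref{lema menor}, $(H,e_H)\gg(H',e_{H'})$ iff $\textbf{e}_\textbf{H}\subseteq\textbf{e}_{\textbf{H}'}$; transporting to the corresponding $B_K$-groups $(P,\psi)$, $(P',\psi')$ this becomes $\textbf{e}_{P,\psi}\subseteq\textbf{e}_{P',\psi'}$, which by Lemma \ref{bk iso} is equivalent to $(P,\psi)\twoheadrightarrow(P',\psi')$. Hence the closed subclasses of $\mathcal{M}$ correspond to the closed subsets of $\mathcal{B}_K\text{-gr}$, so $Cl_{\mathbb{K}B_K}$ is carried onto $\mathcal{C}l_{\mathcal{B}_K\text{-}gr}$. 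Composing $\Theta$ with this identification sends an ideal $I$ to $\{[H,e_H]\mid e_H\in I(H)\}$, which under the dictionary is exactly $\mathcal{P}_I=\{(L,\varphi)\in\mathcal{B}_K\text{-gr}\mid \textbf{e}_{L,\varphi}\subseteq I\}$, and $\Psi$ becomes $\mathcal{P}\mapsto\sum_{(L,\varphi)\in\mathcal{P}}\textbf{e}_{(L,\varphi)}$. Complete distributivity then follows because $\mathcal{C}l_{\mathcal{B}_K\text{-}gr}$, being the lattice of upward-closed subsets of a poset under union and intersection, is completely distributive.

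The main obstacle is the deflation half of the $MC$-pair verification: showing that for a $B_K$-group $(P,\psi)$ the idempotent $e^{P\times K}_{P_\psi}$ lies in $\underline{\underline{E_P}}$, equivalently that $\mathbb{K}B_K(Def^{P}_{P/N})(e^{P\times K}_{P_\psi})=0$ for every nontrivial $N\unlhd P$. This amounts to analyzing how deflation in $\mathbb{K}B_K$ — deflation of $P\times K$ by $N\times 1$ — acts on the Burnside idempotent $e^{P\times K}_{P_\psi}$: when $N\le\ker\psi$ it produces $m_{P,N}$ times the relative idempotent of the quotient (Bouc's deflation formula for relative idempotents, which is exactly why vanishing of all such $m_{P,N}$ is built into the definition of a $B_K$-group), while when $N\not\le\ker\psi$ one must argue the image vanishes outright. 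Both computations are available from Section 4 of \cite{relativeBgroup}, so the remaining work is bookkeeping aligning the two frameworks.
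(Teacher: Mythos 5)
Your overall route --- specialize Theorem \ref{thm clasific} to $A=\mathbb{K}B_K$ and then identify $([\mathcal{M}],\gg)$ with $(\mathcal{B}_K\text{-}gr,\twoheadrightarrow)$ --- is exactly the strategy the paper follows in Section \ref{examples} (Proposition \ref{B_K-A}, the proposition on $\gg$ versus $\twoheadrightarrow$, and the final theorem on the map $\rho$). The forward dictionary via Corollaries \ref{p2 ideal} and \ref{bk ideal}, the order comparison via Lemmas \ref{lema menor} and \ref{bk iso}, and the distributivity remark are all fine.

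However, there is a genuine gap in your inverse map: the assignment $(P,\psi)\mapsto (P,e^{P\times K}_{P_\psi})$ does \emph{not} land in $\mathcal{M}$ in general, and the step you defer as ``bookkeeping'' is actually false. For $N\unlhd P$ nontrivial, deflation in $\mathbb{K}B_K$ is deflation of $P\times K$ by $N\times 1$, and by Bouc's deflation formula the coefficient is $m_{P_\psi,\,P_\psi\cap(N\times 1)}$, where $P_\psi\cap(N\times 1)$ corresponds to $N\cap\ker\psi$ under $P_\psi\cong P$. If $N\cap\ker\psi\neq 1$ this vanishes by the $B_K$-group condition, but if $N\cap\ker\psi=1$ the coefficient is $m_{P,1}=1$ and the deflation is nonzero. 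So for instance any $(P,\psi)$ with $\psi$ injective and $P\neq 1$ is vacuously a $B_K$-group, yet $e^{P\times K}_{P_\psi}\notin\underline{\underline{E_P}}$. This is precisely why Proposition \ref{B_K-A} imposes the \emph{additional} condition $k_1(X)\cap N\neq 1$ for all nontrivial $N\unlhd L$, on top of $(X,p_2)$ being a $B_K$-group. The paper avoids your construction entirely: surjectivity of $\rho$ is obtained by starting from $e^{T\times K}_{T_\varphi}$ and invoking Lemma \ref{Ideal-Agrupo} to replace it by an $MC$-pair generating the same ideal, after which Corollary \ref{bk ideal} and Lemma \ref{bk iso} identify its image with $(T,\varphi)$. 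Your argument needs the same detour; with that repair the rest of your proof goes through.
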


\begin{prop}\label{B_K-A}
Let $L$ be a finite group. Then $L$ is an $MC$-group of $ \mathbb{K} B_K$ if and only if there exists some $X\leq L\times K$ such that $p_1(X)=L$, $k_1(X)\cap N \neq 1$ for all $1\neq N \unlhd L$ and $(X,p_2)$ is a $B_K$-group. 
\end{prop}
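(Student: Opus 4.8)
The plan is to unwind the definition of $MC$-group for $A=\mathbb{K}B_K$ by hand. Recall that $\mathbb{K}B_K(L)=\mathbb{K}B(L\times K)$, whose primitive idempotents are the elements $e^{L\times K}_X$ of Theorem 2.5.2 of \cite{serge-biset}, indexed by the $(L\times K)$-conjugacy classes of subgroups $X\le L\times K$; and that under the shift one has $\mathbb{K}B_K(Res^L_H)=\mathbb{K}B(Res^{L\times K}_{H\times K})$ for $H\le L$ and $\mathbb{K}B_K(Def^L_{L/N})=\mathbb{K}B(Def^{L\times K}_{(L\times K)/(N\times 1)})$ for $N\unlhd L$, since the biset $U\times K$ attached to $U=Res^L_H$ (resp.\ $U=Def^L_{L/N}$) is exactly $Res^{L\times K}_{H\times K}$ (resp.\ $Def^{L\times K}_{(L\times K)/(N\times 1)}$). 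Thus $L$ is an $MC$-group of $\mathbb{K}B_K$ precisely when some $X\le L\times K$ satisfies both $e^{L\times K}_X\in\underline{E_L}$ and $e^{L\times K}_X\in\underline{\underline{E_L}}$, and I would show these two memberships correspond, respectively, to ``$p_1(X)=L$'' and to the conjunction ``$k_1(X)\cap N\neq 1$ for every $1\neq N\unlhd L$, and $(X,p_2)$ is a $B_K$-group''.

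For the first: since restriction of bisets does not change marks, $\mathbb{K}B(Res^{L\times K}_{H\times K})(e^{L\times K}_X)$ is nonzero iff $X$ is $(L\times K)$-subconjugate to $H\times K$; as the $(L\times K)$-conjugates of $H\times K$ are the subgroups ${}^{l}H\times K$ with $l\in L$, this holds iff $p_1(X)$ is $L$-subconjugate to $H$. Running $H$ over the proper subgroups of $L$ gives $e^{L\times K}_X\in\underline{E_L}\iff p_1(X)=L$. I would also isolate here the elementary fact, valid once $p_1(X)=L$, that $k_1(X)\unlhd L$ and that the nontrivial normal subgroups of $X$ contained in $\ker(p_2:X\to K)=k_1(X)\times 1$ are exactly the subgroups $Q\times 1$ with $1\neq Q\unlhd L$ and $Q\le k_1(X)$ — the nonobvious inclusion being that such a $Q$ is normalized by all of $L=p_1(X)$.

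For the second: the key input is the formula for the deflation of the primitive idempotents of $\mathbb{K}B$, namely $\mathbb{K}B(Def^G_{G/N})(e^G_X)=c\,e^{G/N}_{XN/N}$ where $c$ is a nonzero rational multiple of the constant $m_{X,\,X\cap N}$; consequently this deflation vanishes iff $m_{X,\,X\cap N}=0$. (This is a short Möbius computation from the closed expression for $e^G_X$ in Theorem 2.5.2 of \cite{serge-biset} together with the identity $Def^G_{G/N}([G/J])=[(G/N)/(JN/N)]$, the number $m_{X,\,X\cap N}$ arising as the $B$-group constant of $X$ relative to $X\cap N$; its fibered counterpart is Proposition \ref{res-ind-fiber}.) Applying this to $G=L\times K$ with $N=N_0\times 1$ for $1\neq N_0\unlhd L$, and using $X\cap(N_0\times 1)=(k_1(X)\cap N_0)\times 1$ together with $m_{X,1}=1$: the membership $e^{L\times K}_X\in\underline{\underline{E_L}}$ first forces $k_1(X)\cap N_0\neq 1$ for every nontrivial $N_0\unlhd L$, and then says exactly $m_{X,\,(k_1(X)\cap N_0)\times 1}=0$ for all such $N_0$; by the fact isolated above, as $N_0$ runs over the nontrivial normal subgroups of $L$ the groups $(k_1(X)\cap N_0)\times 1$ run over all nontrivial normal subgroups of $X$ inside $\ker p_2$, so this last condition is precisely that $(X,p_2)$ is a $B_K$-group. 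Combining the two halves proves ``only if''; assuming $p_1(X)=L$ together with the two listed conditions, the same computations run backwards to give $e^{L\times K}_X\in\underline{E_L}\cap\underline{\underline{E_L}}$, which is ``if''.

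The step I expect to be the real work is the deflation formula together with its vanishing criterion $=0\iff m_{X,\,X\cap N}=0$: that is the one place where one genuinely computes rather than bookkeeps, and it is where $B_K$-constants enter. The only other point needing care is that the identification of ``nontrivial normal subgroups of $X$ inside $\ker p_2$'' with the family ``$(k_1(X)\cap N_0)\times 1$, $1\neq N_0\unlhd L$'' truly relies on $p_1(X)=L$ and fails in general; the rest is routine transport through the shifting identities.
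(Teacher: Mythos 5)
Your proposal is correct and follows essentially the same route as the paper: the restriction condition is handled via Theorem 5.2.4 of \cite{serge-biset} to get $p_1(X)=L$, and the deflation condition via the formula $Def^{L\times K}_{(L/N)\times K}(e^{L\times K}_X)=\lambda\, m_{X,\,X\cap(N\times 1)}\,e^{(L/N)\times K}_{\overline{X}}$ (Lemma 2.2 of \cite{relativeBgroup}), with $X\cap(N\times 1)=(k_1(X)\cap N)\times 1$ and $m_{X,1}=1$ forcing the two remaining conditions. Your explicit justification that the nontrivial normal subgroups of $X$ inside $\ker p_2$ are exactly the $(k_1(X)\cap N_0)\times 1$ (using $p_1(X)=L$) is a point the paper glosses over, but the argument is the same.
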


\begin{proof}
Suppose that $L$ is an $MC$-group for  $ \mathbb{K} B_K$, then there exists $e^{L\times K}_X \in \mathbb{K}B_K(L)$ such that
\begin{align*}
\mathbb{K}B_K(Res^L_H)(e^{L\times K}_X)=&0\\
\mathbb{K}B_K(Def^L_{L/N})(e^{L\times K}_X)=&0
\end{align*}
for all $H \lneq L$ and $1\neq N \unlhd L$. If $p_1(X)\neq L$, by Theorem 5.2.4 \cite{serge-biset}  one has 
\begin{align*}
\mathbb{K}B_K(Res^L_{p_1(X)})(e^{L\times K}_X)=\mathbb{K} B(Res^{L\times K}_{p_1(X)\times K})(e^{L\times K}_X)\neq 0
\end{align*}
this  contradiction show that $p_1(X)=L$. Moreover, if there exists $1\neq N\unlhd L$ be such that  $k_1(X)\cap N =1$, then  by Lemma 2.2 of  \cite{relativeBgroup} there exist $\lambda \neq 0$ in $\mathbb{K}$, such that
\begin{equation}\label{defBgroup}
\mathbb{K}B_K(Def_{L/N}^L)(e^{L\times K}_X)=\lambda m_{X, X\cap(N\times 1)} e^{(L/N) \times K}_{\overline{X}},
\end{equation}
where  and  $m_{X, X\cap (N\times 1)} =m_{X,1}=1$ since $X\cap (N\times 1) = (k_1(X)\cap N) \times 1$. Thus  $\mathbb{K}B_K(Def_{L/N}^G)(e^{L\times K}_X)\neq 0$, this contradiction shows that  $k_1(X)\cap N \neq 1$ for all $1\neq N \unlhd L$. Lastly, let $1\neq N \unlhd X$ 	be such that $N\leq Ker(p_2)=k_1(X)\times 1$, then $N=M\times 1$, for some $1\neq M\unlhd L$ and 
\begin{align*}
 m_{X_{p_2}, N\times 1} = m_{X,N}=m_{X,M\times 1}
\end{align*}
the first equality is by Definition 2.1 of \cite{serge-biset} and by equation  \ref{defBgroup}), one has $m_{X,M\times 1}=0$. Then $(X,p_2)$ is a $B_K$-group.\\
Conversely,  suppose that there exists some subgroup  $X$ of  $L\times K$ such that $p_1(X)=L$, $k_1(X) \cap N \neq 1$ for all $1\neq N\unlhd L$ and $(X,p_2)$ is a $B_K$-group.  By Theorem 5.2.4 of \cite{serge-biset}, one has 
\begin{align*}
\mathbb{K}B_K(Res^L_H)(e^{L\times H}_X )=0
\end{align*}
for all $H$ proper subgroup of $L$. By Lemma 2.2 of \cite{relativeBgroup},
\begin{align*}
\mathbb{K}B_K(Def^L_{L/N})(e^{L\times H}_X)= \lambda m_{X, X\cap (N\times 1)} e^{L/N \times K}_{\overline{X}},
\end{align*}
note that $X\cap (N\times 1)=(k_1(X)\cap N)\times 1$. Moreover,  
\begin{align*}
m_{X, X\cap (N\times 1)}=m_{X, (k_1(X)\cap N)\times 1}=m_{X_{p_2}, (N\times 1)\times 1} =0
\end{align*}
since $(X, p_2)$ is a $B_K$-group.  Then $ \mathbb{K}B_K(Def^L_{L/N})(e^{L\times H}_X)=0$. Thus 
$L$ is an $MC$-group  of $ \mathbb{K} B_K$.

\end{proof}

\begin{cor}
Let $L$ be a finite group. If $(L\times K, p_2)$ is a $B_K$-group, where $p_2$ is the second projection, then $L$ is an $MC$-group of $ \mathbb{K} B_K$.
\end{cor}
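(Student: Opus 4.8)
The plan is to deduce this immediately from Proposition \ref{B_K-A} by taking the witness subgroup $X$ to be all of $L\times K$. Recall that Proposition \ref{B_K-A} asserts that $L$ is an $MC$-group of $\mathbb{K}B_K$ precisely when there is some $X\leq L\times K$ with $p_1(X)=L$, with $k_1(X)\cap N\neq 1$ for every non-trivial $N\unlhd L$, and with $(X,p_2)$ a $B_K$-group. So the whole argument amounts to checking that $X=L\times K$ satisfies these three conditions.

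First I would note that $p_1(L\times K)=L$ is immediate, and that $k_1(L\times K)=\{l\in L\mid (l,1)\in L\times K\}=L$; hence for every non-trivial normal subgroup $N$ of $L$ one has $k_1(L\times K)\cap N=N\neq 1$. The third condition, that $(L\times K,p_2)$ is a $B_K$-group, is exactly the hypothesis of the corollary. Applying Proposition \ref{B_K-A} with this choice of $X$ then yields that $L$ is an $MC$-group of $\mathbb{K}B_K$.

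There is essentially no obstacle here: the statement is a direct specialization of Proposition \ref{B_K-A}. The only point that needs a line of care is the identification of the subgroup $k_1(X)$ of $L$ when $X=L\times K$, i.e.\ recalling that $k_1(X)$ denotes the image in $L$ of $X\cap(L\times 1)$, which for $X=L\times K$ is all of $L$; everything else is a routine verification of the hypotheses of the cited proposition.
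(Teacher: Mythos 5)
Your proof is correct and is exactly the argument the paper intends: the corollary is stated without proof as an immediate specialization of the preceding proposition, obtained by taking $X = L\times K$, and your verification of the three hypotheses (including the observation that $k_1(L\times K)=L$) is precisely what is needed.
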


\begin{prop}
Let  $(e^{L\times K}_X, L)$ and  $(e^{H\times K}_Y,  H)$ be elements of $\mathcal{M}$, such that $p_1(X)=L$, $k_1(X)\cap N \neq 1$ for all $1\neq N \unlhd L$ and  $p_1(Y)=H$, $k_1(Y)\cap M \neq 1$ for all $1\neq M \unlhd H$.     Then   \emph{\textbf{e}}$_{Y_{p_2}}\gg$\emph{ \textbf{e}}$_{X_{p_2}}$ if only if $(Y,p_2) \twoheadrightarrow (X,p_2)$.
\end{prop}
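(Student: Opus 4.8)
The plan is to recognise that the two displayed objects $\textbf{e}_{X_{p_2}}$ and $\textbf{e}_{Y_{p_2}}$ are, up to notation, nothing but the ideals $\textbf{e}_\textbf{L}$ and $\textbf{e}_\textbf{H}$ attached to the pairs $(e^{L\times K}_X, L)$ and $(e^{H\times K}_Y, H)$ in the sense of Section \ref{ideals and idem}, and then to invoke the characterisation of ideal inclusion for $B_K$-groups proved in \cite{relativeBgroup}.

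First I would check that $(X, p_2)$ and $(Y, p_2)$ are $B_K$-groups. Since $(e^{L\times K}_X, L)$ lies in $\mathcal{M}$, the group $L$ is an $MC$-group of $\mathbb{K}B_K$ and $e^{L\times K}_X \in \underline{E_L}\cap\underline{\underline{E_L}}$, meaning that $\mathbb{K}B_K(Res^L_H)(e^{L\times K}_X)=0$ for every proper $H<L$ and $\mathbb{K}B_K(Def^L_{L/N})(e^{L\times K}_X)=0$ for every $1\neq N\unlhd L$. Together with the standing hypotheses $p_1(X)=L$ and $k_1(X)\cap N\neq 1$ for all $1\neq N\unlhd L$, the last paragraph of the proof of Proposition \ref{B_K-A} applies verbatim to this particular $X$ and yields that $(X, p_2)$ is a $B_K$-group; the same argument gives that $(Y, p_2)$ is a $B_K$-group.

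Next I would reconcile the notation. Applying Corollary \ref{p2 ideal} (Corollary 3.5 of \cite{relativeBgroup}) with $G=L$ and the subgroup $X\leq L\times K$, the ideal of $\mathbb{K}B_K$ generated by $e^{L\times K}_X$ equals the ideal generated by $e^{X\times K}_{X_{p_2}}$, that is, $\textbf{e}_{X_{p_2}}$; on the other hand this same ideal is, by definition, the ideal $\textbf{e}_\textbf{L}$ generated by the primitive idempotent $e^{L\times K}_X$ of $\mathbb{K}B_K(L)=\mathbb{K}B(L\times K)$. Similarly $\textbf{e}_{Y_{p_2}}=\textbf{e}_\textbf{H}$. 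Hence, by Definition \ref{gg} together with Lemma \ref{lema menor}, the relation $\textbf{e}_{Y_{p_2}}\gg\textbf{e}_{X_{p_2}}$ is precisely the statement $(e^{H\times K}_Y, H)\gg(e^{L\times K}_X, L)$, which in turn is equivalent to the inclusion $\textbf{e}_\textbf{H}\subseteq\textbf{e}_\textbf{L}$, i.e. $\textbf{e}_{Y_{p_2}}\subseteq\textbf{e}_{X_{p_2}}$.

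Finally I would apply Lemma \ref{bk iso} (Lemma 5.5 of \cite{relativeBgroup}) with the $B_K$-group $(X, p_2)$ in the role of $(L,\varphi)$ and the group over $K$ $(Y, p_2)$ in the role of $(M,\psi)$: the inclusion $\textbf{e}_{Y_{p_2}}\subseteq\textbf{e}_{X_{p_2}}$ holds if and only if $(Y, p_2)\twoheadrightarrow(X, p_2)$. Chaining this with the equivalences of the previous paragraph gives the proposition. The only real subtlety is bookkeeping: one must make sure the abbreviation $\textbf{e}_{Y_{p_2}}\gg\textbf{e}_{X_{p_2}}$ is read as $(e^{H\times K}_Y, H)\gg(e^{L\times K}_X, L)$ — hence as $\textbf{e}_{Y_{p_2}}\subseteq\textbf{e}_{X_{p_2}}$ and not the opposite inclusion — so that it lines up with the direction of the quotient map in Lemma \ref{bk iso}; no computation beyond the results already available in \cite{relativeBgroup} and Section \ref{ideals and idem} is needed.
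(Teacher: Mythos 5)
Your proposal is correct and follows essentially the same route as the paper: reduce $\gg$ to ideal inclusion via Lemma \ref{lema menor}, then invoke the inclusion-versus-quotient criterion from \cite{relativeBgroup}. In fact you are somewhat more careful than the paper's own two-line proof, since you explicitly verify that $(X,p_2)$ and $(Y,p_2)$ are $B_K$-groups (a hypothesis Lemma \ref{bk iso} genuinely needs) and you make the identification $\textbf{e}_{X_{p_2}}=\textbf{e}^{\textbf{L}\times\textbf{K}}_{\textbf{X}}$ via Corollary \ref{p2 ideal} explicit, both of which the paper leaves implicit.
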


\begin{proof}
One has   $e^{L\times K}_X \gg e^{H\times K}_Y $ if only if \textbf{e}$_{Y, p_2} \subseteq$ \textbf{e}$_{X, p_2}$. On the other hand, by  Theorem 5.3 of \cite{relativeBgroup},  \textbf{e}$_{Y, p_2} \subseteq$ \textbf{e}$_{X, p_2}$ if only if $$(Y,p_2) \twoheadrightarrow (X,p_2).$$
\end{proof}

\begin{thm}
Let $\mathcal{B}_K$-gr denote the subset of $\mathcal{S}_K$  consisting of $B_K$-groups, ordered by $ \twoheadrightarrow $ and let the poset $([\mathcal{M}], \gg)$  as the Definition \ref{gg} of $\mathbb{K}B_K$. Then the map
\begin{align*}
\rho: [\mathcal{M}]&\longrightarrow \mathcal{B}_K\text{-}gr\\
(e^{H\times K}_T, H) &\longmapsto \beta_k(T, p_2)
\end{align*}
is an order isomorphism.
\end{thm}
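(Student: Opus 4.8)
The plan is to show that $\rho$ is well-defined, a bijection, and a morphism of posets in both directions. For well-definedness, I would start with an element $(e^{H\times K}_T, H)\in[\mathcal{M}]$. Here $T\leq H\times K$, and since $H$ is an $MC$-group of $\mathbb{K}B_K$ witnessed by the idempotent $e^{H\times K}_T$, Proposition \ref{B_K-A} gives $p_1(T)=H$, $k_1(T)\cap N\neq 1$ for all $1\neq N\unlhd H$, and $(T,p_2)$ is a $B_K$-group; in particular $\beta_K(T,p_2)=(T,p_2)$ up to isomorphism. So on the image side the map really just sends $(e^{H\times K}_T,H)$ to the $B_K$-group $(T,p_2)$. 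To see it is well-defined on equivalence classes, note $(e^{H\times K}_T,H)\sim(e^{H'\times K}_{T'},H')$ means (by Definition \ref{gg} and Lemma \ref{lema menor}) that $\mathbf{e}_{T,p_2}=\mathbf{e}_{T',p_2}$; by Lemma \ref{bk iso} applied to the $B_K$-groups $(T,p_2)$ and $(T',p_2)$ this forces $(T,p_2)\twoheadrightarrow(T',p_2)$ and $(T',p_2)\twoheadrightarrow(T,p_2)$, hence $(T,p_2)\cong(T',p_2)$ in $grp_{\Downarrow K}$, so they represent the same element of $\mathcal{S}_K$.

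Next I would prove $\rho$ is order-preserving and order-reflecting simultaneously, which also yields injectivity. Given two classes $(e^{H\times K}_T,H)$ and $(e^{H'\times K}_{T'},H')$ in $[\mathcal{M}]$, we have by the chain Definition \ref{gg} $\Leftrightarrow$ Lemma \ref{lema menor} that $(e^{H\times K}_T,H)\gg(e^{H'\times K}_{T'},H')$ iff $\mathbf{e}_{T,p_2}\subseteq\mathbf{e}_{T',p_2}$. Since both $(T,p_2)$ and $(T',p_2)$ are $B_K$-groups, Lemma \ref{bk iso} says this is equivalent to $(T,p_2)\twoheadrightarrow(T',p_2)$, i.e. $\beta_K(T,p_2)\twoheadrightarrow\beta_K(T',p_2)$, which is exactly $\rho(e^{H\times K}_T,H)\twoheadrightarrow\rho(e^{H'\times K}_{T'},H')$. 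Because $\twoheadrightarrow$ is a partial order on $\mathcal{B}_K$-gr (antisymmetry up to isomorphism) and $\gg$ is a partial order on $[\mathcal{M}]$, mutual $\gg$ forces equality of classes, so $\rho$ is injective.

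For surjectivity, take any $B_K$-group $(L,\varphi)\in\mathcal{B}_K$-gr. I would set $T=L_\varphi=\{(l,\varphi(l))\mid l\in L\}\leq L\times K$ and consider the idempotent $e^{L\times K}_{L_\varphi}$, which is the generator of $\mathbf{e}_{L,\varphi}$. One checks $p_1(L_\varphi)=L$ and $k_1(L_\varphi)=\ker\varphi$; since $(L,\varphi)$ is a $B_K$-group, for $1\neq N\unlhd L$ with $N\leq\ker\varphi$ one has $m_{L,N}=0$, and the argument is precisely the converse direction of Proposition \ref{B_K-A} — here one must verify the hypotheses of that proposition, namely $p_1(L_\varphi)=L$, $k_1(L_\varphi)\cap N\neq 1$ for all $1\neq N\unlhd L$, and $(L_\varphi,p_2)$ a $B_K$-group. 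The condition $k_1(L_\varphi)\cap N=(\ker\varphi)\cap N\neq 1$ is the delicate point: it may fail, so one should instead note that $(L_\varphi,p_2)\cong(L,\varphi)$ in $grp_{\Downarrow K}$ and appeal to Corollary \ref{p2 ideal} / the classification to replace $(L,\varphi)$ by a group over $K$ in which $\varphi$ is injective on the relevant normal subgroups — equivalently, use that every ideal is generated by idempotents $e^{L\times K}_X$ with $X$ of the shape guaranteed by Proposition \ref{B_K-A} combined with Lemma \ref{Ideal-Agrupo}. Concretely: the ideal $\mathbf{e}_{L,\varphi}$ equals $\mathbf{e}_H$ for some $MC$-group $H$ of $\mathbb{K}B_K$ with generating idempotent $e^{H\times K}_X$ as in Proposition \ref{B_K-A}, and then $\beta_K(X,p_2)\cong(L,\varphi)$ by Corollary \ref{bk ideal}(3)–(4) together with Lemma \ref{bk iso}; hence $(L,\varphi)=\rho(e^{H\times K}_X,H)$.

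The main obstacle is precisely this surjectivity step: reconciling the two different normalizations of the generating data. On the $[\mathcal{M}]$ side the idempotents come from arbitrary $MC$-groups $H$ together with subgroups $X\leq H\times K$ satisfying the surjectivity-on-$p_1$ and non-trivial-intersection-with-normal-subgroups conditions of Proposition \ref{B_K-A}; on the $\mathcal{B}_K$-gr side the data is a group over $K$ with a homomorphism $\varphi$. Showing that the assignment $(L,\varphi)\mapsto$ (the $MC$-group and idempotent whose ideal is $\mathbf{e}_{L,\varphi}$) is a genuine two-sided inverse requires unwinding that $\beta_K$ applied to $(X,p_2)$ recovers $(L,\varphi)$ up to isomorphism, which is where Corollary \ref{bk ideal} and Corollary \ref{p2 ideal} do the essential bookkeeping. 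Once that identification is in place, the fact that $\rho$ and its inverse both preserve the respective orders is immediate from the equivalences already assembled (Definition \ref{gg} $\Leftrightarrow$ Lemma \ref{lema menor} $\Leftrightarrow$ Lemma \ref{bk iso}), so $\rho$ is an isomorphism of posets.
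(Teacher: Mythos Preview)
Your argument is correct and follows essentially the same route as the paper: well-definedness and order-preservation via the equivalence $\gg \Leftrightarrow$ inclusion of ideals (Lemma \ref{lema menor}) combined with Lemma \ref{bk iso}, and surjectivity via Lemma \ref{Ideal-Agrupo} applied to the ideal $\mathbf{e}_{L,\varphi}$, then identifying the result through Corollary \ref{p2 ideal} and Corollary \ref{bk ideal}.

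The one genuine difference is that you observe up front, for $(e^{H\times K}_T,H)\in\mathcal{M}$, that $(T,p_2)$ is \emph{already} a $B_K$-group and hence $\beta_K(T,p_2)=(T,p_2)$; the paper never asserts this and works with $\beta_K(T,p_2)$ throughout, using Corollary \ref{bk ideal}(4) to pass between $\mathbf{e}_{T,p_2}$ and $\mathbf{e}_{\beta_K(T,p_2)}$. Your shortcut is legitimate, but note that Proposition \ref{B_K-A} is \emph{stated} existentially (``there exists some $X$''); what you actually need is the stronger fact, visible in its \emph{proof}, that every idempotent $e^{H\times K}_T\in\underline{E_H}\cap\underline{\underline{E_H}}$ satisfies $p_1(T)=H$, the intersection condition, and $(T,p_2)$ a $B_K$-group --- you should say this explicitly. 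Also, in your well-definedness step you silently use Corollary \ref{p2 ideal} to identify the ideal generated by $e^{H\times K}_T$ with $\mathbf{e}_{T,p_2}$; cite it there as you do later. Your first attempt at surjectivity (via $L_\varphi$ directly) indeed fails for the reason you identify, and your fallback is exactly what the paper does; you could simply delete the false start.
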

\begin{proof}
First we prove that the map  $\rho$ is well defined: Let $(e^{H\times K}_T, H)$ and $(e^{L\times K}_J, L)$ in $\mathcal{M}$ be such that $(e^{H\times K}_T, H)\sim (e^{L\times K}_J, L)$. Then the ideal generated  by $ e^{H\times K}_T $,   denoted by $\textbf{e}^{\textbf{H}\times \textbf{K}}_\textbf{T} $, is equal to the ideal generated by  $ e^{L\times K}_J $,  denoted by $\textbf{e}^{\textbf{L}\times \textbf{K}}_\textbf{J}$.
By Corollary \ref{bk ideal}, one has 
\begin{align*}
\textbf{ e}_{\beta_K (T,p2)}  = \textbf{e}^{\textbf{H}\times \textbf{K}}_\textbf{T} =\textbf{e}^{\textbf{L}\times \textbf{K}}_\textbf{J}=  \textbf{e}_{\beta_K(J,p_2)} , 
\end{align*}
then,  by lemma \ref{bk iso} $\beta_K (T, p_2) \cong \beta_K (J, p_2) $. Thus $\rho$ is well defined.\\
Now we prove that $\rho$ is an order morphism:  Let $(e^{H\times K}_T, H)$ and $(e^{L\times K}_J, L)$ in $[\mathcal{M}]$ be such that $(e^{H\times K}_T, H)\gg  (e^{L\times K}_J, L)$, this means that
$$\textbf{e}^{\textbf{H}\times \textbf{K}}_\textbf{T} \subseteq \textbf{e}^{\textbf{L}\times \textbf{K}}_\textbf{J}. $$
By Corollary \ref{bk ideal} and Lemma \ref{bk iso} one has   $\beta_K (T, p_2) \twoheadrightarrow  \beta_K (J, p_2) $. \\
Hence, all we need to show $\rho$ is  bijective:
\begin{itemize}
\item Injective.  Let $(e^{H\times K}_T, H)$ and $(e^{L\times K}_J, L)$ in $\mathcal{M}$ be such that $ \rho((e^{H\times K}_T, H))=\rho((e^{L\times K}_J, L))$. This means that
$ \beta_K (T, p_2) \cong  \beta_K (J, p_2) $. By Corollary \ref{bk ideal} and Lemma \ref{bk iso} one has  
$$\textbf{e}^{\textbf{H}\times \textbf{K}}_\textbf{T} =\textbf{e}^{\textbf{L}\times \textbf{K}}_\textbf{J}. $$
Thus $(e^{H\times K}_T, H) =(e^{L\times K}_J, L)$ i.e. $\rho$ is injective.
\item Surjective. Let $(T,\varphi) \in B_k$-gr. Then by Lemma \ref{Ideal-Agrupo} there exists an $MC$-group $H$ and $e^{H\times K}_J \in \underline{E_H} \cap \underline{\underline{E_H}}$ such that
\begin{align*}
\textbf{e}^{\textbf{H}\times \textbf{K}}_\textbf{J}=\textbf{e}^{\textbf{H}\times \textbf{K}}_{\textbf{T}_\varphi}.
\end{align*}
By Corollary \ref{p2 ideal} and Corollary \ref{bk ideal},
\begin{align*}
\textbf{ e}_{\beta_k(J, p_2)} =  \textbf{e}^{\textbf{J}\times\textbf{ K}}_{\textbf{J}_{p_2}} = \textbf{e}^{\textbf{H}\times\textbf{ K}}_\textbf{J} = \textbf{e}^{\textbf{T} \times\textbf{ K}}_{\textbf{T}_\varphi} ,
\end{align*}
then $\beta_k(J, p_2)\cong (T, \varphi)$. Thus $\rho((e^{H\times K}_J, H)) =(T,\varphi)$ i.e. $\rho$ is surjective.

\end{itemize}
 \end{proof}

\section*{Acknowledgments}
I would like to thank my advisor, Nadia Romero whose help and   wonderful suggestions  helped me  all the  time. Also thanks  to  Serge Bouc  for enlightening conversations.
I also would like to thank the following institutions, for their financial support: UNAM and   CONACYT.

\bibliographystyle{plain}
\bibliography{biblio1}
\end{document}